\documentclass{article}

\usepackage{proof}
\usepackage{mathtools}
\usepackage{bussproofs}
\usepackage{multicol,stmaryrd,amssymb}
\DeclareFontFamily{U}{MnSymbolC}{}
\DeclareSymbolFont{MnSyC}{U}{MnSymbolC}{m}{n}
\DeclareMathSymbol{\boxdot}{\mathbin}{MnSyC}{"76}
\DeclareMathSymbol{\diamonddot}{\mathbin}{MnSyC}{"7E}
\DeclareFontShape{U}{MnSymbolC}{m}{n}{
    <-6>  MnSymbolC5
   <6-7>  MnSymbolC6
   <7-8>  MnSymbolC7
   <8-9>  MnSymbolC8
   <9-10> MnSymbolC9
  <10-12> MnSymbolC10
  <12->   MnSymbolC12}{}

\usepackage{enumerate}
\usepackage{hyperref}
\usepackage{cleveref}
\usepackage{authblk}
\usepackage{url}

\usepackage{tikz} 
\usetikzlibrary{arrows.meta, positioning, fit} 

\newcommand{\langu}{\mathcal L_\Diamond}

\newcommand{\langfull}{\mathcal L_{hv}}

\newcommand{\hbx}{\Box_h} %
\newcommand{\vbx}{\Box_v} %
\newcommand{\hd}{\Diamond_h} 
\newcommand{\vd}{\Diamond_v} 
\newcommand{\hr}{\prec_h}
\newcommand{\vr}{\prec_v}
\newcommand{\rh}{\succ_h}

\newcommand{\ud}{\Diamond_h} 
\newcommand{\ld}{\Diamond_v} 

\usepackage{yfonts,array}
\usepackage{amsmath,amssymb,amsthm,units,stmaryrd}

\usepackage{upgreek,xcolor,bm,tikz,stackrel}
\usepackage[inline]{enumitem}
\usepackage{graphicx}
\usepackage[all]{xy}
\usepackage{tikz-cd}
\usetikzlibrary{arrows}

\newcommand{\ignore}[1]{}

\newcommand{\val}[1]{\lb #1 \rb}

\newcommand{\peq}{\preccurlyeq}

\newcommand{\sofia}[1]{{\color{blue}\footnote{\color{blue} SOFIA: #1}}}

\def\lb{\left\llbracket}
\def\rb{\right\rrbracket}
\def\<{\left (}

\def\>{\right )}
\def\({\left (}
\def\){\right )}

\DeclareSymbolFont{AMSb}{U}{msb}{m}{n}
\DeclareMathSymbol{\N}{\mathbin}{AMSb}{"4E}
\DeclareMathSymbol{\Z}{\mathbin}{AMSb}{"5A}
\DeclareMathSymbol{\R}{\mathbin}{AMSb}{"52}
\DeclareMathSymbol{\Q}{\mathbin}{AMSb}{"51}
\DeclareMathSymbol{\I}{\mathbin}{AMSb}{"49}
\DeclareMathSymbol{\C}{\mathbin}{AMSb}{"43}

\newtheorem{thm}{Theorem}[section]
\newtheorem{proposition}[thm]{Proposition}
\newtheorem{defn}[thm]{Definition}
\newtheorem{lem}[thm]{Lemma}
\newtheorem{prop}[thm]{Proposition}
\newtheorem{corollary}[thm]{Corollary}

\newtheorem{remark}[thm]{Remark}
\newtheorem{question}[thm]{Question}

\newtheoremstyle{claimstyle}
  {}{}{\itshape}{}{\normalfont}{.}{.5em}{}

\theoremstyle{claimstyle}
\newtheorem{claim}{Claim}
\newenvironment{proofof}[1]
  {\begin{proof}[Proof of #1]}
  {\end{proof}}
  
\usepackage{amsmath}

\usepackage[T1]{fontenc}
\newcommand*{\textcal}[1]{%
  \textit{\fontfamily{qzc}\selectfont#1}%
}

\input{tikzit.sty}

\begin{document}

\title{Completeness and Incompleteness for Expanding Gödel-Löb Logics}

\author[1]{Somayeh Chopoghloo\thanks{\url{s.chopoghloo@ipm.ir}}}
\author[2]{David Fernández-Duque\thanks{\url{fernandez-duque@ub.edu}}}
\author[2,3]{Joost J.~Joosten\thanks{\url{jjoosten@ub.edu}}}
\author[2]{Sofía Santiago-Fernández\thanks{\url{sofia.santiago@ub.edu}}}

\affil[1]{School of Mathematics, Institute for Research in Fundamental Sciences (IPM), Tehran, Iran}
\affil[2]{Department of Philosophy, University of Barcelona, Barcelona, Spain}
\affil[3]{Centre de Recerca Matemàtica, Bellaterra (Barcelona), Spain}

\date{}
\maketitle

\begin{abstract}
Expanding products of modal logics are bimodal logics obtained from the combination of a `horizontal component' logic and a `vertical component' logic, lying between the fusion and the Cartesian product of the two logics.
Gabelaia et al.~showed that expanding products are often decidable when the first component is Noetherian, although their methods are semantical and do not yield complete axiomatisations.
They do, however, propose a candidate, dubbed the expanding commutator of the two logics and known to be complete in many `non-Noetherian' cases.
In this paper, we consider various expanding products of modal logics whose vertical component is $\sf GL$.
We show that the standard axiomatisation is complete when the horizontal component is either $ {\sf K4}$ or $ {\sf GL} $, but incomplete when it is  ${\sf Grz}$ or any logic between ${\sf K4.3}$ and ${\sf Grz.3}$, thus yielding a partial solution to a question posed by Gabelaia et al.~more than two decades ago.
\end{abstract}

\section{Introduction}

Modal logics used in applications often involve more than one modality, for example, to represent the knowledge of several agents or different spatial dimensions (see, e.g., \cite{Aiello2007Spatial, mdml, Ditmarsch2007DEL}).
This raises the question of how the computational complexity of such multimodal logics relates to that of the unimodal fragments, but the answer to this question depends largely on how the individual modalities interact.
On one extreme, fusions of modal logics combine the component logics with essentially no interaction and tend to have similar complexity as their components \cite{Halpern:1992:guideComplexityModalLogics}, whereas products of modal logics, where the individual relations commute with each other, very easily become undecidable or even non-axiomatisable when the component logics are transitive~\cite{mdml, Gabelaia2005}.

Somewhere in the middle, products of modal logics with expanding domains, introduced by Kurucz and Zakharyaschev~\cite{kurucz2003note}, provide one such intermediate framework. They preserve part of the structure of ordinary products while relaxing the strong constant-domain assumption underlying standard product semantics. This weakening of the standard product construction can be naturally motivated by, among others, temporal description logics \cite{lutz2008temporal, sturm2002tableau} and dynamic topological logic \cite{kremer2005dynamic, konev2006dynamic, fernandez2011dynamic}. In the former setting, it is often natural to assume that each time point is associated with a representation of the world which may grow over time but cannot shrink. In dynamic topological logic, constant domains are related to dynamical systems where the function describing the movement of points in a topological space is a homeomorphism. Meanwhile, expanding domains are related to the more relaxed requirement that the function be continuous, thereby allowing for a broader class of dynamical behaviours.

Formally, products with expanding domains combine a `horizontal' modal logic ${\sf L}_h$ with a `vertical' logic ${\sf L}_v$ to obtain a new logic $({\sf L}_h\times{\sf L}_v)^{\sf e}$, defined semantically as the logic of those frames consisting of an ${\sf L}_h$-frame $(W,R_h)$ where each $x\in W$ is assigned an ${\sf L}_v$-frame $f(x) =  (W^x,R^x _v)$ in such a way that $x R_h y$ implies that $f(x)$ is a subframe of $f(y)$. An example of these frames is shown in Figure~\ref{fig0}. When the accessibility relation is transitive, we will write $\prec$ rather than $R$, so that $R_h$ becomes $\hr$ and $R^x _v $ becomes $\vr^x$; as we will work almost exclusively with transitive frames, this will be our preferred notation.

\begin{figure}[t]
\centering
\resizebox{0.7\textwidth}{!}{
\begin{tikzpicture}[
    >=Stealth,
    node distance=2.5cm,
    every node/.style={font=\small},
    world/.style={
        circle,
        draw,
        fill=black,
        inner sep=2pt
    },
    vworld/.style={
        circle,
        draw=red,
        fill=red,
        inner sep=1.5pt
    },
    framebox/.style={
        draw,
        rounded corners,
        thick,
        inner sep=8pt
    }
]
\node[vworld] (a1) at (-0.5,1.5) {};
\node[vworld] (a2) at (0.5,1.5) {};
\node[vworld] (a3) at (0,0.7) {};

\draw[->,dashed,red] (a1)--(a2);
\draw[->,dashed,red] (a3)--(a1);

\node[framebox,dashed,fit=(a1)(a2)(a3),label=above:$f(x)$] (F1) {};
\node[vworld] (b2) at (3,2.3) {};
\node[vworld] (b3) at (2.5,1.5) {};
\node[vworld] (b4) at (3.5,1.5) {};
\node[vworld] (b5) at (3,0.7) {};

\draw[->,dashed,red] (b3)--(b4);
\draw[->,dashed,red] (b4)--(b2);
\draw[->,dashed,red] (b5)--(b3);

\node[framebox,dashed,fit=(b2)(b3)(b4)(b5),label=above:$f(y)$] (F2) {};
\node[vworld] (c1) at (5,2.3) {};
\node[vworld] (c2) at (6,2.3) {};
\node[vworld] (c3) at (5.5,1.5) {};
\node[vworld] (c5) at (6.5,1.5) {};
\node[vworld] (c6) at (6,0.7) {};
\node[vworld] (c7) at (7,2.3) {};

\draw[->,dashed,red] (c5)--(c2);
\draw[->,dashed,red] (c3)--(c5);
\draw[->,dashed,red] (c3)--(c1);
\draw[->,dashed,red] (c1)--(c2);
\draw[->,dashed,red] (c6)--(c3);
\draw[->,dashed,red] (c5)--(c7);

\node[framebox,dashed,fit=(c1)(c2)(c3)(c5)(c6)(c7),  label=above:$f(z)$] (F3) {};
\node[world,label=below:$x$] (x1) at (0,-0.3) {};
\node[world,label=below:$y$] (x2) at (3,-0.3) {};
\node[world,label=below:$z$] (x3) at (6,-0.3) {};

\draw[->,thick] (x1)--(x2);
\draw[->,thick] (x2)--(x3);

\node[left=0.6cm of x1] {${\sf L}_h$-frame $(W, R_h)$};
\node[left=0.6cm of F1] {${\sf L}_v$-frames};
\draw[dashed,->] (x1)--(F1);
\draw[dashed,->] (x2)--(F2);
\draw[dashed,->] (x3)--(F3);
\end{tikzpicture}
}
\caption{An expanding domain frame}
\label{fig0}
\end{figure}

While expanding products retain high computational complexity, Gabelaia et al.~\cite{pml} showed that there are many cases where they are decidable despite standard products not being so, most notably when the horizontal logic is Noetherian, i.e., it extends $\sf GL$ or ${\sf Grz}$.\footnote{Recall that if $W$ is a set and  $R\subseteq W\times W$, then $R$ is {\em Noetherian} if whenever $w_0Rw_1Rw_2\ldots$ is an infinite sequence, there is $n\in\mathbb N$ such that, for all $k>n$, $w_k=w_n$. This is a stronger condition than {\em converse well-foundedness,} which states that no such sequence exists at all. ${\sf Grz}$ is the logic of transitive, reflexive, Noetherian frames, and $\sf GL$ is the logic of transitive, conversely well-founded frames. For definitions of some normal modal logics, such as $\sf GL$ and ${\sf Grz}$, see \Cref{sec:Hilbertcalculus}.} The issue is that the proof of decidability is highly model-theoretic and does not yield a complete axiomatisation. It is also worth noting that decidability relies on Kruskal's theorem~\cite{Kruskal1960}, and accordingly, Gabelaia et al.~\cite{pml} also show that these logics are not decidable in primitive recursive time.

Despite completeness being unknown in many cases, there is a natural candidate for an axiomatisation known as the {\em expanding commutator} of two logics, denoted $[{\sf L}_h , {\sf L}_v]^{\sf e}$. This commutator is known to be complete in many `non-Noetherian' cases; namely, $[{\sf L}_h,{\sf L}_v]^{\sf e}$ is complete whenever ${\sf L}_h\in \{{\sf K},{\sf K4},{\sf S4},{\sf S5}\}$ and ${\sf L}_v$ is axiomatisable by modal formulas with a universal Horn first-order translation~\cite{mdml}. However, logics such as $\sf GL$ do not have such an axiomatisation and thus Gabelaia et al.~left open a series of questions which can roughly be summarized as follows: {\em For which expanding product logics where at least one component is Noetherian, is the commutator complete?}

In this paper, we consider expanding products of modal logics whose `expanding' component (i.e., the vertical one) is the logic of provability $\sf GL$~\cite{Boolos:1993:LogicOfProvability}. We show that the standard axiomatisation is complete when the horizontal component is either $ {\sf K4}$ or $ {\sf GL}$, but incomplete when it is ${\sf Grz}$ or any logic lying between ${\sf K4.3}$ and ${\sf Grz.3}$ (along with many other logics; see Section~\ref{sec:IncompleteLogics}).

We first introduce a class of bi-relational Kripke frames, called {\em forward-confluent} (see, e.g.,~\cite{BalbianiDF21}) for which the commutator is sound. This class of frames is convenient because the canonical model of $[{\sf L}_h , {\sf L}_v]^{\sf e}$ is based on a forward-confluent frame, despite not being presented as an expanding domain frame. The confluence properties moreover allow us to apply a selection method to extract $({\sf K4} \times{\sf GL})^{\sf e}$-models from the canonical model of $[{\sf K4},{\sf GL}]^{\sf e}$, thus obtaining the completeness for this logic. Roughly speaking, the selection method builds a `horizontal' frame $(W,\prec_h)$ by successively adding new points, in such a way that when $x$ is added to $W$, a suitable `vertical' frame $f(x)$ is selected as well.

However, extracting a $({\sf GL} \times{\sf GL})^{\sf e}$-model from the respective canonical model requires an additional technique which does rely on a specific property of $\sf GL$. Namely, every finite tree-like $\sf GL $-model $\mathfrak m$ labelled by finite $\Sigma$-types (sets of formulas satisfying the expected closure conditions for negation and conjunction) admits an associated formula ${\rm Sim}(\mathfrak m)$ such that, for any ${\sf GL}$-model $\mathfrak M$ and any world $w$ of $\mathfrak M$,
\[
\mathfrak M,w \models {\rm Sim}(\mathfrak m) \quad \text{iff} \quad \text{there exists a $\Sigma$-simulation } e : \mathfrak{m} \to \mathfrak{M}
\nonumber
\]
where $e$ is an embedding sending the root of $\mathfrak m$ to $w$ and preserving all formulas in $\Sigma$. Such formulas exist over $\sf GL$ and over $\sf Grz$ but not over `non-Noetherian' logics such as $\sf K4$~\cite{Fernandez11}.

The reason that such formulas are needed in our completeness proof is that the canonical model of $[{\sf GL},{\sf GL}]^{\sf e}$ is not conversely well-founded (for either relation), but any {\em definable} non-empty set does have a maximal element, as given by the Löb axiom in the form $\Diamond\varphi\to \Diamond (\varphi\wedge\neg\Diamond\varphi)$. Roughly speaking, when extracting an expanding domain model from the canonical model, we will always choose $x$ so that it satisfies $\neg\Diamond {\rm Sim}(f(x))$, and hence the vertical frame assigned to $x$ may not repeat in any successor of $x$. This process terminates in finite time, for otherwise, Kruskal's theorem yields $x \prec_h y$ such that $f(x)$ embeds into $f(y)$, contradicting the fact that $x$ satisfies $\neg\Diamond {\rm Sim}(f(x))$, thus obtaining a $[{\sf GL},{\sf GL}]^{\sf e}$-model, as required.

Moreover, we show that $[{\sf L}_h,{\sf GL}]^{\sf e}$ is not complete whenever ${\sf L}_h = {\sf Grz}$ or it contains the $\sf{3}$ axiom. Here we once again employ our forward-confluent models, since by soundness we can use them to exhibit formulas which are not derivable, despite being valid over the class of expanding domain models.

\subsection*{Layout} 
The structure of the rest of the paper is as follows. In \Cref{sec:ExpandingProductsOfGLFrames}, we introduce the syntax and various semantics for the bimodal language, together with the corresponding notions of satisfiability and their interrelationships. In \Cref{sec:Hilbertcalculus}, we review a Hilbert-style calculus for the expanding commutator of two unimodal logics and establish several soundness results. In \Cref{sec:IncompleteLogics}, we use forward-confluent models to establish the incompleteness of some expanding products. \Cref{sec:Thecanonicalmodel} is devoted to constructing the canonical model, which already suffices to give an alternative proof of completeness for $[\mathsf{K4},\mathsf{K4}]^{\sf e}$, albeit for its forward-confluent semantics. \Cref{sec:Moments} reviews finite labelled trees~\cite{pml}, here called {\em moments,} along with some orderings between them. In \Cref{sec:Simulations}, we recall the notion of a simulation between a moment and the canonical model and establish some properties which are then used in \Cref{sec:Extract} to extract quasimodels from the canonical model, establishing the completeness of $[\mathsf{K4},\mathsf{GL}]^{\sf e}$ for its class of expanding domain models. In order to obtain completeness for $[\mathsf{GL},\mathsf{GL}]^{\sf e}$, \Cref{Sec:Char} defines the formula $\mathrm{Sim}(\mathfrak m)$ associated with the moment $\mathfrak m$, used to extract Noetherian quasimodels from the canonical model. Finally, \Cref{sec:Concludingremarks} provides some concluding remarks and discusses several open problems.

\section{Syntax and semantics}\label{sec:ExpandingProductsOfGLFrames}

Throughout the paper, we fix the language $\langfull$ as generated by countably many propositional variables in the set $\mathrm{Prop}$, $\neg$, $\wedge$, $\hd$, and $\vd$. We write $\hbx$ for $\neg \ud \neg$ and $\vbx$ for $\neg \ld \neg$. Since our base logic is classical, we will freely use other Boolean connectives, such as $(\varphi \vee \psi)$ and $( \varphi \to \psi)$ as abbreviations for $\neg (\neg \varphi \wedge \neg \psi)$ and $\neg (\varphi \wedge \neg \psi)$, respectively. We also adopt the abbreviations $\diamonddot_\alpha \varphi=\varphi\vee\Diamond_\alpha\varphi $ and $\boxdot_\alpha \varphi=\varphi\wedge \Box_\alpha\varphi$ for $\alpha \in \{h, v\}$.

\subsection{Expanding domain semantics}

Let us begin by recalling the original expanding domain semantics used by Gabelaia et al.~\cite{pml}.

\begin{defn}[\cite{pml}]\label{e-frame} 
Let $\mathcal{C}_h$ and $\mathcal{C}_v$ be two classes of unimodal frames. An {\em expanding domain frame} is a pair $(\mathfrak{F},f)$, where $\mathfrak{F}=(W,\hr)\in\mathcal{C}_h$ is the {\em horizontal frame} and $f:W\to\mathcal{C}_v$ is a function assigning to each $x\in W$ a {\em vertical frame} $f(x)=(W^x,\vr^x)\in\mathcal{C}_v$, in such a way that whenever $x \hr y$ for $x,y\in W$, the frame $f(x)$ is a subframe of $f(y)$; that is,
    \begin{enumerate}
    \item[(i)] $W^x \subseteq W^y$, and
    \item[(ii)] $u \vr^x w$ iff $u \vr^y w$ for all $u,w\in W^x$.
    \end{enumerate}
We denote the class of all such frames by $(\mathcal{C}_h \times \mathcal{C}_v)^{\sf e}$. 

A {\em valuation} on $(\mathfrak F,f)$ is a family $V=(V^x)_{x \in W}$ such that, for each $x \in W$,  $V^x$ is a valuation on the frame $f(x)=(W^x,\vr^x)$. A triple $\mathfrak{M} = (\mathfrak{F}, f, V)$, consisting of an expanding domain frame with a valuation is an {\em expanding domain model.}
\end{defn}

The satisfiability relation $\mathfrak{M}, (x, u) \Vdash\varphi$ for $\varphi \in \langfull$, $x \in W$ and $u \in W^x$ is recursively defined as
\begin{itemize}
    \item $\mathfrak{M}, (x, u) \Vdash p$ iff $u \in V^x(p)$ for $p \in \mathrm{Prop}$,
    \item the Booleans $\neg, \wedge$ are as usual,
    \item $\mathfrak{M}, (x, u) \Vdash \hd \varphi$ iff there is $y \in W$ such that $x \hr y$ and $\mathfrak{M}, (y, u) \Vdash\varphi$,
    \item $\mathfrak{M}, (x, u) \Vdash \vd \varphi$ iff there is $w \in W^x$ such that $u \vr^x w$ and $\mathfrak{M}, (x, w) \Vdash \varphi$.
\end{itemize}

A formula $\varphi \in \langfull$ is {\em satisfied in an expanding domain model} $\mathfrak{M}$ if there exist $x \in W$ and $u \in W^x$ such that $\mathfrak{M}, (x,u) \Vdash \varphi$. We say that $\varphi$ is {\em valid in such a model} $\mathfrak{M}$ if $\mathfrak{M}, (x, u) \Vdash \varphi$ for all $x \in W$ and $u \in W^x$. Finally, $\varphi$ is {\em valid in an expanding domain frame} if it is valid in every expanding domain model based on that frame.

\begin{remark}[\cite{pml}]
The formulas $\ld \ud p \to \ud \ld p$ and $\ud \vbx p \to \vbx \ud p$ are valid in all expanding domain frames. 
\end{remark}

An expanding domain frame or model inherits the properties of its components in the natural way, e.g.~if $(\mathfrak F,f)$ is an expanding domain frame with $\mathfrak F=(W,R)$, then we say that $(\mathfrak F,f)$ is {\em transitive} if $\hr$ is transitive and each $\vr^x$ with $x\in W$ is transitive; it is {\em Noetherian} if these relations are all Noetherian, and so on.
Note that the relations $\hr$ and $\vr^x$ need not be transitive, but as the notation suggests, we are mostly interested in the case where they are. However, it will be convenient to leave the door open to non-transitive relations, for example when discussing results in the literature.

We will often abuse notation and identify a logic $\sf L$ with its class of frames $\mathcal C(\mathsf L)$, so that if ${\sf L}_h$ and ${\sf L}_v$ are normal unimodal logics, we simply write $({\sf L}_h \times {\sf L}_v)^{\sf e}$ to denote $\big(\mathcal{C}({\sf L}_v) \times \mathcal{C}({\sf L}_h)\big)^{\sf e}$. Note that there is no risk of confusion, since $({\sf L}_h \times {\sf L}_v)^{\sf e}$ is a purely semantical construction and there is no other way to read this notation.\footnote{In contrast, in Section~\ref{sec:Hilbertcalculus} we recall the expanding commutator $[{\sf L}_h , {\sf L}_v]^{\sf e}$, which treats ${\sf L}_h$ and ${\sf L}_v$ as logics and not as classes of frames.}

\subsection{Embedding domain semantics}

The class of expanding domain models should be regarded as the `official' semantics for our logics, and our main results pertain to this class. However, a central theme of our strategy will be to broaden this semantics to obtain classes of frames that are easier to construct, but which can ultimately be transformed into expanding domain frames. We will begin with a rather innocuous, but technically convenient, generalisation of expanding domain semantics. As we will only apply this generalised semantics to transitive logics, we will assume transitivity when working with this class of frames.

\begin{defn}\label{defEmbModel}
Let $\mathcal{C}_h$ and $\mathcal{C}_v$ be two classes of transitive unimodal frames. An {\em embedding domain frame} (or {\em ${\sf e}$-frame}) is a triple $(\mathfrak{F},f,e)$, where $\mathfrak{F}=(W,\hr)\in\mathcal{C}_h$ is the \emph{horizontal frame}, $f:W\to\mathcal{C}_v$ is a function assigning to each $x\in W$ a {\em vertical frame} $f(x)=(W^x,\vr^x)\in\mathcal{C}_v$, and $e$ is a function assigning to each pair $x,y\in W$ with $x\hr y$ an {\em embedding} from $f(x)$ to $f(y)$, i.e.~an injective function $e_{xy} \colon W^x\to W^y$ satisfying
\begin{enumerate}[label=(\roman*)]
    \item  $u \vr^x w$ iff $e_{xy}(u) \vr^y e_{xy}(w)$ for all $u,w\in W^x$, and
    \item\label{itETrans}  if $x\hr y\hr z$ then $e_{yz} \circ e_{xy} = e_{xz}$.
\end{enumerate}

We denote the class of all such ${\sf e}$-frames by $(\mathcal{C}_h \rtimes \mathcal{C}_v)^{\sf e}$.
\end{defn}

Note that the second condition specifically applies to transitive frames and is one of the reasons that we will not define embedding models for non-transitive classes, although it should be possible to modify the definition for a non-transitive setting.
In practice, $(W,\hr)$ will usually be a tree, in which case it suffices to define $e_{xy}$ when $y$ is an immediate successor of $x$ and then use \ref{itETrans} to recursively define $e_{xy}$ in other cases.

Semantics for embedding domain models  (or ${\sf e}$-models) are defined analogously to those for expanding domain models, except that we set 
\begin{itemize}
\item $\mathfrak{M}, (x, u) \Vdash \ud \varphi$ iff there is $y \in W$ such that $x \hr y$ and $\mathfrak{M}, (y, e_{xy}(u)) \Vdash\varphi$.
\end{itemize}

We adopt the same conventions for $\sf e$-frames as we did for expanding domain frames, e.g.~an $\sf e$-frame inherits the properties (such as Noetherianness) of its relations, and we identify a logic with its class of frames when using the notation $({\sf L}_h\rtimes{\sf L}_v)^{\sf e}$.

Working with $\sf e$-frames rather than expanding domain frames will be more convenient for technical reasons, but in theory the distinction is inessential due to the following.

\begin{lem}\label{EquiEmbe&Expan} 
Let $\mathcal C_h$ and $\mathcal C_v$ be two classes of transitive frames.
\begin{enumerate}
    \item \label{1EquiEmbe&Expan} If a formula is satisfiable in an expanding domain frame $(\mathcal C_h \times \mathcal C_v )^{\sf e}$, then it is satisfiable in a $(\mathcal C_h \rtimes \mathcal C_v )^{\sf e}$-frame.
    \item \label{2EquiEmbe&Expan} If a formula is satisfiable in a $(\mathcal C_h \rtimes \mathcal C_v )^{\sf e}$-frame whose horizontal component is a tree, then it is satisfiable in an expanding domain frame in $(\mathcal C_h \times \mathcal C_v )^{\sf e}$.
\end{enumerate}
\end{lem}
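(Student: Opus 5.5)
For part~\ref{1EquiEmbe&Expan}, take an expanding domain model $\mathfrak M=(\mathfrak F,f,V)$ over a frame in $(\mathcal C_h\times\mathcal C_v)^{\sf e}$ and keep $\mathfrak F$, $f$ and $V$ unchanged. For $x\hr y$ we let $e_{xy}\colon W^x\to W^y$ be the inclusion map. Condition (i) of Definition~\ref{defEmbModel} is exactly clause (ii) of Definition~\ref{e-frame}, and injectivity is immediate. Composition \ref{itETrans} holds because a composite of inclusions is the inclusion. With this choice $e_{xy}(u)=u$, so the $\sf e$-semantics clause for $\ud$ coincides literally with the expanding-domain clause. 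The other clauses are identical, so a routine induction on formulas shows that $\mathfrak M,(x,u)\Vdash\varphi$ in both senses. Hence satisfiability transfers directly.

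For part~\ref{2EquiEmbe&Expan}, let $(\mathfrak F,f,e,V)$ be an $\sf e$-model with $\mathfrak F=(W,\hr)$ a tree, transitive as throughout, with root $r$. Replacing the model by the generated submodel at the satisfying horizontal point, we may assume $\varphi$ holds at $(r,u_0)$. The idea is to rename worlds so that the embeddings become inclusions, by taking a direct limit along branches. We set
\[
W^x_* := \{(z,u) : z\peq_h x,\ u\in W^z,\ u\notin \operatorname{ran}(e_{z'z}) \text{ for all } z'\hr z\},
\]
where $z\peq_h x$ means $z=x$ or $z\hr x$. That is, each point of $f(x)$ is tagged with the horizontal point where it first appears. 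Because $\mathfrak F$ is a tree, the predecessors of $x$ form a chain, and by \ref{itETrans} together with injectivity every $u\in W^x$ has a unique ``earliest'' preimage. This is the step where well-foundedness of the predecessor chain is needed, and it is the main obstacle. If $\hr$ restricted to the predecessors of $x$ is only a tree without well-foundedness downward, we would first unravel or otherwise arrange that each $x$ has finitely many predecessors, as in standard tree frames of finite depth; I expect the paper's notion of tree gives this. One must also handle the case where the chain of predecessors of $x$ has no first element at which $u$ appears.

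Given this, we define a bijection $\pi_x\colon W^x_*\to W^x$ by $\pi_x(z,u)=e_{zx}(u)$, with $e_{xx}=\mathrm{id}$. We transport $\vr^x$ and $V^x$ along $\pi_x$, so that $f_*(x)$ is isomorphic to $f(x)$ and hence lies in $\mathcal C_v$, assuming the class is closed under isomorphism. For $x\hr y$ we have $W^x_*\subseteq W^y_*$: an element $(z,u)$ with $z\peq_h x$ also satisfies $z\hr y$, and it remains ``new'' at $z$. Moreover $\pi_y\restriction W^x_*=e_{xy}\circ\pi_x$ by \ref{itETrans}, and the embedding property (i) then yields the subframe condition (ii) of Definition~\ref{e-frame}.

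Finally, an induction on $\varphi$ shows that $(x,a)$ in the new expanding domain model satisfies exactly the formulas that $(x,\pi_x(a))$ satisfies in the old model. The only nontrivial case is $\ud$, and it follows from the identity $\pi_y(a)=e_{xy}(\pi_x(a))$ for $a\in W^x_*$.
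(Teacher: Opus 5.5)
Your proof is correct once ``tree'' is read as in the paper: rooted, with each node having only finitely many $\hr$-predecessors. This is implicit in the remark after Definition~\ref{defEmbModel} that embeddings along a tree are determined by those between immediate successors. With that reading, the set of $z\peq_h x$ with $u\in\operatorname{ran}(e_{zx})$ is a nonempty finite chain. Its least element gives your tag, and your uniqueness argument from linearity of predecessors, injectivity and \ref{itETrans} goes through. Part~1 coincides with the paper's.

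In part~2 you and the paper build the same direct limit, but the paper uses equivalence classes where you use representatives. It makes the $W^x$ disjoint and quotients by the equivalence $\sim$ generated by $w\sim e_{xy}(w)$; you instead pick the ``birth'' pair $(z,u)$ of each class.

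\emph{What your route buys.} It makes $\pi_x$ an explicit bijection and pinpoints where the tree hypothesis is used. The paper only asserts this point: injectivity of $w\mapsto[w]_\sim$ on each $W^x$ is exactly what makes $\widehat{\vr}^x$ well defined.

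\emph{What the quotient buys.} It needs no earliest appearance. An induction on zigzags shows that if $a\in W^x$, $b\in W^y$ and $a\sim b$, then $a=e_{zx}(c)$ and $b=e_{zy}(c)$ for some $z$ with $z\peq_h x$, $z\peq_h y$ and some $c\in W^z$ (taking $e_{zz}:=\mathrm{id}$). This uses only that the predecessors of each node form a chain. So the quotient also handles the non-well-founded case you worried about, without unravelling or otherwise modifying the horizontal frame, which could take you out of $\mathcal C_h$.

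For the same reason, drop your preliminary passage to a generated submodel. Your construction never uses the root, and $\mathcal C_h$ is not assumed closed under generated subframes.
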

\begin{proof}
\begin{enumerate}[wide, labelwidth=!, labelindent=5pt, itemsep=0pt]
\item[\ref{1EquiEmbe&Expan}.] 
Clearly, every expanding domain model can be regarded as an embedding domain model by just letting the embeddings be the inclusion maps.

\item[\ref{2EquiEmbe&Expan}.] Consider a $( \mathcal C_h \rtimes \mathcal C_v )^{\sf e}$-model $\mathfrak M = ((W,\prec_h),f,e, V)$  where $(W,\prec_h)$ is a tree. We can moreover assume that for $x,y\in W$, $x\neq y$ implies that $W^x\cap W^y =\varnothing$ by a standard disjoint union construction. 

Let $W^{\infty} = \bigcup_{x\in W}W^x$ and define ${\sim}\subseteq W^\infty\times W^\infty$ to be the least equivalence relation such that $w\sim e_{xy}(w)$ whenever $x\hr y$ and $w\in W^x$. Since $(W,\hr)$ is a tree and the embeddings satisfy $e_{yz}\circ e_{xy}=e_{xz}$, every $\sim$-equivalence class is precisely the collection of copies of a point obtained by repeatedly applying embeddings along a branch of the tree.

For each $x\in W$, let $\widehat W^x=\{[w]_\sim \mid w\in W^x\}$, and define a relation $\widehat{\vr}^x$ on $\widehat W^x$ by
\[
[u]_\sim\widehat{\vr}^x [w]_\sim \quad \text{iff}\quad u\vr^x w.
\]
Now let $\widehat f(x)=(\widehat W^x,\widehat{\vr}^x)$. It is not hard to check that  $((W,\hr),\widehat f)$ is an expanding domain frame. Moreover, if $\widehat V^x(p)=\{[w]_\sim \mid w\in V^x(p)\}$, then by induction on a formula $\varphi\in \langfull$, we have  
\[\mathfrak M,(x,w)\Vdash\varphi
\quad\text{iff}\quad \widehat{\mathfrak M},(x,[w]_\sim)\Vdash\varphi,\]
for every $x\in W$ and $w\in W^x$.
\end{enumerate}
\end{proof}

Thus, $(\mathcal C_h \times \mathcal C_v)^{\sf e}$ and $(\mathcal C_h \rtimes \mathcal C_v)^{\sf e}$ are essentially equivalent whenever the horizontal frame is a tree. The latter will be the case in all of the main constructions of the text, so, for practical purposes, they will be interchangeable for us. This explains our intentional choice to use similar notation for the two classes.
Working with $(\mathcal C_h \rtimes \mathcal C_v)^{\sf e}$ is convenient, since constructions can be a bit more flexible, so in the sequel we will work with this class as our `default' semantics, appealing to Lemma~\ref{EquiEmbe&Expan} to apply our results to `proper' expanding domain models.

\subsection{Forward-confluent semantics}\label{sec:ForwardConfluentSemantics}

Our proof of completeness cannot be a `proof by canonical model', since the canonical model for $\sf GL$ is not Noetherian.
However, it will be convenient to construct the canonical model as an `intermediate' model. The issue is that this model will not be based on an expanding or even embedding domain frame. Instead, it belongs to a wider class of bi-relational Kripke models, called {\em forward-confluent models}, which validate the expanding commutator axioms. As we will see, this semantics can be regarded as extending expanding domain semantics; however, the reader should be warned that this extension is less innocuous than that given by embedding domain models, as we will illustrate in \Cref{sec:IncompleteLogics}.

\begin{defn}\label{FrameDefs}\label{ModelDefs}
A {\em forward-confluent frame} is a triple $\mathcal F =(W,\prec_h,\prec_v)$, where $\prec_h$ and $\prec_v$ are binary relations on $W$ satisfying the properties of {\em Left Commutativity} (LC) and {\em Church-Rosser} (CR): 
\begin{itemize}
\item[(LC)] if $x \prec_v x' \prec_h y'$, then there exists $y\in W$ such that $x \prec_h y \prec_v y'$;
\item[(CR)] if $x' \succ_v x \prec_h y$, then there exists $y'\in W$ such that $x' \prec_h y' \succ_v y$.
\end{itemize}
Given classes of frames $\mathcal C_h$ and $\mathcal C_v$, the class of all forward-confluent frames where $(W,\prec_h ) \in \mathcal C_h$ and $(W,\prec_v ) \in \mathcal C_v$ is denoted by $\mathcal C_h \rtimes \mathcal C_v$. 

A {\em forward-confluent model} $\mathcal{M} = (W, \prec_h, \prec_v, \val\cdot)$ is a forward-confluent frame equipped
with a valuation $\val \cdot : \mathrm{Prop} \to \mathcal{P}(W)$.
\end{defn}

Models assign to each formula $\varphi \in \langfull$ a truth set $\val\varphi \subseteq W$ by extending $\val {\cdot}$ inductively as follows:
\begin{itemize}
    \item $\val{\neg\varphi} := W \setminus \val\varphi$,
    \item $\val{\varphi\wedge\psi} := \val\varphi \cap \val\psi$,
    \item $\val{\Diamond_\alpha \varphi} :=  \{w \in W \mid \exists u \in W \ (w \prec_\alpha u \ \text{and} \ u \in \val\varphi)\}$ for $\alpha \in \{h, v\}$. 
\end{itemize}

A formula $\varphi \in \langfull$ is {\em satisfied} in $\mathcal{M}$ if there exists some $w \in W$ such that $w \in \val{\varphi}$, in which case we write $\mathcal{M}, w \Vdash \varphi$. When the underlying model is clear from   context, we simply write $w \Vdash \varphi$. Validity in forward-confluent models and frames is defined in the usual way. In particular, for a forward-confluent frame $\mathcal F$, the notation $\mathcal F \vDash \varphi$ indicates validity of $\varphi$ on $\mathcal F$.

\begin{remark}
To aid in legibility, we use a gothic font as in e.g.~$\mathfrak M$ to denote expanding or embedding domain models and a calligraphic font as in e.g.~$\mathcal M$ to denote forward-confluent models.
\end{remark}

We adopt the conventions of expanding domain models regarding notation and terminology; in particular, $(W,\prec_h,\prec_v)$ is {\em transitive} if both $\prec_h$ and $\prec_v$ are, and, while the relations will typically be transitive, non-transitive relations will be allowed in principle. Moreover, when writing ${\sf L}_h \rtimes {\sf L}_v$, we identify each logic with its corresponding class of frames. The following is an easy observation; note that transitivity is not assumed.

\begin{proposition}\label{propLC}
Let $\mathcal F = (W, \prec_h, \prec_v)$ be an arbitrary bi-relational frame.
Using $\models$ to denote first-order validity, we have:
\begin{enumerate}
    \item $\mathcal F \models \mbox{(LC)} $ if and only if $\mathcal F \vDash \ld \ud p \to \ud \ld p$, and
    \item $\mathcal F \models \mbox{(CR)}$ if and only if $ \mathcal F \vDash \hd \vbx p \to \vbx \hd p$.
\end{enumerate}
\end{proposition}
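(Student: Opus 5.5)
This is the standard Sahlqvist-style correspondence argument, carried out separately for each item, with the two directions of each equivalence handled as soundness and as a minimal-valuation argument. No transitivity is used anywhere, only the first-order conditions and the semantic clauses for $\hd$ and $\vd$ from the definition of forward-confluent models.

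\emph{Item 1.} For the forward direction, assume (LC) and fix a valuation and a point $x$ with $x\Vdash \ld\ud p$. Then there are $x'$ and $y'$ with $x\prec_v x'\prec_h y'$ and $y'\in\val p$. (LC) gives $y$ with $x\prec_h y\prec_v y'$, so $y\Vdash \ld p$ and hence $x\Vdash\ud\ld p$. For the converse, suppose the formula is valid and take $x\prec_v x'\prec_h y'$. Use the minimal valuation $\val p=\{y'\}$. Then $x\Vdash\ld\ud p$, so by validity $x\Vdash\ud\ld p$. This yields $y$ with $x\prec_h y$ and some $z\succ_v y$ with $z\in\val p$. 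Necessarily $z=y'$, so $y\prec_v y'$, which is exactly (LC).

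\emph{Item 2.} For the forward direction, assume (CR) and let $x\Vdash \hd\vbx p$. Pick $y$ with $x\prec_h y$ and $y\Vdash\vbx p$. For any $x'$ with $x\prec_v x'$, (CR) gives $y'$ with $x'\prec_h y'$ and $y\prec_v y'$. Then $y'\in\val p$, so $x'\Vdash\hd p$; since $x'$ was arbitrary, $x\Vdash\vbx\hd p$. For the converse, take $x'\succ_v x\prec_h y$ and use the minimal valuation for the boxed antecedent, namely $\val p=\{z\mid y\prec_v z\}$. Then $y\Vdash\vbx p$, so $x\Vdash\hd\vbx p$, and validity gives $x\Vdash\vbx\hd p$. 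Hence $x'\Vdash\hd p$, so there is $y'$ with $x'\prec_h y'$ and $y'\in\val p$, i.e.\ $y\prec_v y'$, which is (CR).

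The only mildly delicate point is choosing the right minimal valuations in the two converse directions: a singleton for the diamond-shaped antecedent in item 1, and the $\prec_v$-successor set of $y$ for the boxed antecedent in item 2. Everything else is routine unfolding of the satisfaction clauses.
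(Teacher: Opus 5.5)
Your proof is correct and takes the same approach as the paper. Your left-to-right direction of item~2 is exactly the argument the paper writes out, and it is the only direction the paper spells out. Your other three directions correctly fill in the routine cases the paper leaves implicit, using the appropriate minimal valuations: the singleton $\{y'\}$ in item~1 and the $\prec_v$-successor set of $y$ in item~2.
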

\begin{proof}
We comment only on the left-to-right direction of the second item. So, we assume $\mathcal F \models \mbox{(CR)}$ and for arbitrary valuation, and $x\in W$ we assume $x\Vdash \ud \vbx p$. Consequently, we can find $y$ with $x\prec_h y\Vdash \vbx p$. We need to see that $x\Vdash \vbx \hd p$.
Let $x'$ be such that $x\prec_v x'$.
We know by (CR) that there is some $y'$ with $x'\prec_h y' \succ_v y$, whence $y'\Vdash p$ since $y\Vdash \vbx p$.
Thus, $x'\Vdash \hd p$ and, since $x'$ was arbitrary, $x\Vdash \vbx \hd p$.
\end{proof}

\begin{remark}
We adapt the terminology of `left commutativity' and `Church-Rosser' from Gabelaia et al.~\cite{pml}, who use them to refer to the respective formulas of Proposition~\ref{propLC}. Note moreover that $\ud \vbx p \to \vbx \ud p$ and $\vd \hbx p \to \hbx \vd p$ are equivalent, which reflects the symmetry observed in (CR).
\end{remark}

Gabelaia et al.~\cite{pml} had already shown that any expanding domain model can be transformed into a bi-relational Kripke model; as it turns out, their construction moreover yields forward-confluent models.

\begin{lem}\label{ExtoFCModel}
Let $\mathcal C_h$ and $\mathcal C_v$ be classes of Kripke frames closed under disjoint unions.
If a formula is satisfied in a $(\mathcal C_h\times \mathcal C_v)^{\sf e}$-model, then it is satisfied in a $\mathcal C_h\rtimes \mathcal C_v$-model.
\end{lem}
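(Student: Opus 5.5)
Given an expanding domain model $\mathfrak M=((W,\hr),f,V)$ with $f(x)=(W^x,\vr^x)$, I will build the standard product-like bi-relational model on the set of pairs
\[
\mathcal W=\{(x,u)\mid x\in W,\ u\in W^x\},
\]
with relations
\[
(x,u)\prec_h(y,w)\ \text{iff}\ x\hr y \text{ and } u=w,
\qquad
(x,u)\prec_v(y,w)\ \text{iff}\ x=y \text{ and } u\vr^x w,
\]
and valuation $\val p=\{(x,u)\mid u\in V^x(p)\}$. The first relation is well defined because $x\hr y$ gives $W^x\subseteq W^y$, so $u\in W^y$.

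\textbf{Truth preservation.} By a straightforward induction on $\varphi$, $\mathfrak M,(x,u)\Vdash\varphi$ iff $(x,u)\in\val\varphi$. The clauses for $\hd$ and $\vd$ match the definitions of $\prec_h$ and $\prec_v$ verbatim. Hence any formula satisfied in $\mathfrak M$ is satisfied in $\mathcal M=(\mathcal W,\prec_h,\prec_v,\val\cdot)$.

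\textbf{Forward confluence.}
\begin{itemize}
\item (LC): suppose $(x,u)\prec_v(x,w)\prec_h(y,w)$, so $u\vr^x w$ and $x\hr y$. Take $(y,u)$. Then $(x,u)\prec_h(y,u)$, and $u\vr^y w$ holds by the subframe condition (ii).
\item (CR): suppose $(x,w)\succ_v(x,u)\prec_h(y,u)$, so $u\vr^x w$ and $x\hr y$. Take $(y,w)$. Then $(x,w)\prec_h(y,w)$, and $(y,u)\prec_v(y,w)$ again by (ii).
\end{itemize}
Only the left-to-right direction of (ii) is used.

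\textbf{Frame classes.} The main point needing care is that $(\mathcal W,\prec_h)\in\mathcal C_h$ and $(\mathcal W,\prec_v)\in\mathcal C_v$; this is where closure under disjoint unions enters.
\begin{itemize}
\item $(\mathcal W,\prec_v)$ is the disjoint union over $x\in W$ of copies $\{x\}\times W^x$ of $f(x)\in\mathcal C_v$. So it lies in $\mathcal C_v$.
\item $(\mathcal W,\prec_h)$ is the disjoint union over $u\in\bigcup_x W^x$ of the frames $W_u=\{x\mid u\in W^x\}$ carrying the restriction of $\hr$.
\end{itemize}

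The second case is subtle, since a restriction of an $\mathcal C_h$-frame need not belong to $\mathcal C_h$. However, $W_u$ is upward closed: if $u\in W^x$ and $x\hr y$, then $u\in W^y$. So $W_u$ is a generated subframe of $(W,\hr)$. I would therefore either also assume closure under generated subframes, which holds whenever $\mathcal C_h=\mathcal C({\sf L}_h)$ is the class of frames of a modal logic (the paper's convention), or avoid the issue as follows. Replace each $W_u$ by the whole frame $W$, taking the disjoint union of $|\bigcup_x W^x|$ copies of $(W,\hr)$, and keep only the points $(x,u)$ with $u\in W^x$ as "live". Truth at live points is unaffected because live points are upward closed under $\prec_h$. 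Under the stated hypothesis of the lemma, the cleanest route is the first one: note explicitly that frame classes of logics are closed under generated subframes and disjoint unions.
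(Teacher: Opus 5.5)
Your construction, truth-preservation argument and verification of (LC) and (CR) are exactly those of the paper. You are also right that the $\prec_h$-components are the upward-closed subframes $W_u$ of $(W,\hr)$ and not copies of it, so the paper's `readily verified' step silently uses closure of $\mathcal C_h$ under generated subframes. This is harmless because the lemma is only applied to frame classes of modal logics such as ${\sf K4}$ and ${\sf GL}$. Your fallback via full copies of $(W,\hr)$ is underspecified, since the non-live points would need a vertical structure lying in $\mathcal C_v$ and compatible with (LC) and (CR), so the first route is the one to take.
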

\begin{proof}
Given an expanding domain model $\mathfrak{M} = (\mathfrak{F}, f, V)$ where $\mathfrak{F} = (W, \hr )$, $f(x) = (W^x, \vr^x)$ for each $x \in W$ and $V=(V^x)_{x \in W}$, we define a  model $\overline{\mathfrak{M}} := (\overline{W}, \prec_h, \prec_v, \val\cdot)$, where
\begin{itemize}
    \item $\overline{W}:= \{(x,u) \mid x \in W, u \in W^x\}$;
    \item $(x,u) \prec_h (y, w)$ iff $x \hr y$ and $u = w$;
    \item $(x, u) \prec_v (y,w)$ iff $x = y$ and $u \vr^x w$;
    \item $\val p := \{(x, u) \mid u \in V^x(p)\}$ for any $p \in \mathrm{Prop}$.
\end{itemize}
We can show that $\mathfrak{M}, (x, u) \Vdash \varphi$ if and only if $\overline{\mathfrak{M}}, (x, u) \Vdash \varphi$ by an easy induction on $\varphi \in \langfull$, whose details are left to the reader. It only remains to verify that the confluence conditions are met. 
\begin{description}[wide, labelwidth=!, labelindent=5pt, itemsep=0pt]
\item[(LC)] Suppose $(x,u) \prec_v (x',u') \prec_h (y',w')$. Then $x=x'$, $u \vr^x u'$, $u'=w'$, and $x'\hr y'$. Consider $(y',u)$. Since $x=x'$ and $x' \hr y'$, we have $x \hr y'$, and therefore $(x,u) \hr (y',u)$. Moreover, since $u \vr^x u'$ and $x \hr y'$, the subframe condition implies $u \vr^{y'} u'$. As $u'=w'$, we conclude $(y',u) \vr (y',w')$.

\item[(CR)] Suppose $(x',u') \succ_v (x,u) \prec_h (y,w)$. Then $x=x'$, $u \prec_v^x u'$, $u=w$, and $x \prec_h y$. Consider $(y,u')$. Since $x \prec_h y$ and $x=x'$, we obtain $(x',u') \prec_h (y,u')$. Furthermore, from $u \prec_v^x u'$ and the subframe condition we get $u \prec_v^y u'$, which yields $(y,w) \prec_v (y,u')$ since $u=w$. 
\end{description}

It is readily verified that $(\overline W,\prec_h)$ and $(\overline W,\prec_v)$ are isomorphic to disjoint unions of frames from $\mathcal C_h$ and $\mathcal C_v$, respectively, so indeed the new model belongs to $\mathcal C_h\rtimes \mathcal C_v$.
\end{proof}

\section{Expanding commutators} \label{sec:Hilbertcalculus}

In this section, we review the expanding commutator of two modal logics, i.e.~the Hilbert-style calculi $[\mathsf L_h,\mathsf L_v]^{\sf e}$ given individual unimodal logics ${\sf L}_h$ and ${\sf L}_v$.
Before this, let us briefly recall  the unimodal logics that will be used in the text.

We formulate unimodal logics in the language with a single modality $\Diamond$, which we denote $\langu$. For our purposes, a {\em logic} is a set of formulas containing all propositional tautologies and closed under modus ponens, substitution and necessitation.
The basic modal logic $\sf K$ is then the least logic over $\langu$ containing $\Box(p \to q)\to (\Box p \to \Box q)$ and $\Diamond p \leftrightarrow \neg \Box\neg p$,\footnote{This axiom is required when $\Diamond$ is taken as a primitive.} and any logic extending $\sf K$ is a {\em normal modal logic.}

The following well-known modal logics will appear in the text.
\begin{itemize}
    \item $\sf K4$ is obtained from $\sf K$ by adding the transitivity axiom $\Diamond \Diamond p \to \Diamond p.$
    \item $\sf S4$ is obtained from $\sf K4$ by adding the reflexivity axiom $p \to \Diamond p.$
    \item $\sf Grz$ is obtained from $\sf S4$ by adding the Grzegorczyk axiom
    $$p\to \Diamond(p\wedge \Box( \Diamond p\to p)).$$
    \item $\sf GL$ is obtained by extending $\sf K4$ with the Löb axiom
    $$\Diamond p \to \Diamond( p \wedge \neg \Diamond p).$$
    \item Given a normal modal logic $\sf L$, the logic $\sf L.3$ is obtained from $\sf L$ by adding the weak connectedness axiom, also known as the $\sf 3$ axiom,
    $$(\Diamond p \wedge \Diamond q) \to  \Diamond( p \wedge \diamonddot q) \vee \Diamond(q \wedge \diamonddot p).$$
\end{itemize}

Expanding commutators of modal logics combine two individual modal logics $\mathsf L_h$ and $\mathsf L_v$, with $\hd$ replacing the modality of $\mathsf L_h$ and $\vd$ that of $\mathsf L_v$. Accordingly, say that a {\em horizontal instance} of a unimodal formula $\varphi\in\langu $ is the formula $\varphi_h$ obtained by replacing every instance of $\Diamond$ by $\hd$ and, similarly, a {\em vertical instance} of  $\varphi $ is the formula $\varphi_v$ obtained by replacing every instance of $\Diamond$ by $\vd$.
Our definition of {\em logic} can be applied uniformly to $\langfull$, where substitution can be applied to arbitrary formulas of $\langfull $ and necessitation can be applied to either modality.

\begin{defn}\label{defExpandingCommutator}
Let $\mathsf{L}_h$ and $\mathsf{L}_v$ be normal unimodal logics.
The \emph{expanding commutator} $[{\sf L}_h, {\sf L}_v]^{\sf e}$ is the least logic over $\langfull$ containing all horizontal instances of formulas of $\mathsf{L}_h$, all vertical instances of formulas of $\mathsf{L}_v$, and the expanding product axiom schemes
\begin{description}
\item[] $\Diamond_v \Diamond_h p \to \Diamond_h \Diamond_v p$ \hfill \textit{(Left commutativity)} 
\item[] $\Diamond_v \Box_h p \to \Box_h \Diamond_v p$ \hfill \textit{(Church--Rosser)}.
\end{description}
\end{defn}

If $\sf L$ is any logic (over either the unimodal or bimodal language), we say that a formula $\varphi$ is {\em derivable in $\sf L$ from a set $\Gamma$ of formulas}, and write $\Gamma \vdash_{\sf L} \varphi$, if either $\vdash_{\sf L} \varphi$ or there exist formulas $\gamma_1, \ldots, \gamma_n \in \Gamma$ such that
\[
\vdash_{\sf L} (\gamma_1 \land \cdots \land \gamma_n) \to \varphi.
\]
Here, $\vdash_{\sf L} \varphi$ means that $\varphi$ is a {\em theorem of ${\sf L}$}. We may write $\vdash$ instead of $\vdash_{\sf L}$ when $\sf L$ is clear from context.

\begin{remark} \label{RemarkK4xK4}
Let $\mathsf{L}_h, \mathsf{L}_v$ be unimodal logics, let $\varphi, \psi, \varphi_1, \dots, \varphi_n \in \langfull$. The usual reasoning is available in the expanding commutator $[{\sf L}_h, {\sf L}_v]^{\sf e}$ for both modalities given $\alpha \in \{h,v\}$.
    \begin{enumerate}
    \item \label{1RemarkK4xK4}$\vdash \Box_\alpha \varphi_1 \wedge ... \wedge \Box_\alpha \varphi_n \leftrightarrow \Box_\alpha (\varphi_1 \wedge ... \wedge \varphi_n)$.
    \item \label{2RemarkK4xK4} $\vdash \Diamond_\alpha (\varphi_1 \wedge ... \wedge \varphi_n) \to \Diamond_\alpha \varphi_1 \wedge ... \wedge \Diamond_\alpha \varphi_n$.
    \item \label{3RemarkK4xK4} If $\;\vdash \varphi \to \psi$, then $\;\vdash \Diamond_\alpha \varphi \to \Diamond_\alpha \psi$.
    \end{enumerate}
\end{remark}

As we have seen, the class of forward-confluent frames can be seen as generalising expanding domain frames; however, in many (though not all!) cases, the logics coincide, and in particular, the expanding commutator is sound for classes of forward-confluent frames.
Note that the following does not require transitivity and can be applied to any expanding commutator.

\begin{prop}\label{SoundFor-Conflu}
Let ${\sf L}_h, {\sf L}_v$ be unimodal logics. The expanding commutator $[\mathsf L_h,\mathsf L_v]^{\sf e}$ is sound for the class ${\sf L}_h\rtimes {\sf L}_v$.
\end{prop}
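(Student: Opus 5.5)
The plan is the standard soundness argument: we show that every axiom of $[\mathsf L_h,\mathsf L_v]^{\sf e}$ is valid on every frame $\mathcal F=(W,\prec_h,\prec_v)\in{\sf L}_h\rtimes{\sf L}_v$, and that validity on $\mathcal F$ is preserved by the rules. Then the set of formulas valid on $\mathcal F$ is a logic over $\langfull$ containing all the generators. Since $[\mathsf L_h,\mathsf L_v]^{\sf e}$ is the least such logic, it is contained in this set.

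\emph{The rules.} Frame validity is preserved by each rule. Preservation under modus ponens is immediate. Preservation under substitution holds because frame validity quantifies over all valuations: a substitution instance under valuation $\val\cdot$ is evaluated like the original formula under the valuation sending each $p$ to the truth set of the substituted formula. Preservation under necessitation for $\Box_h$ and for $\Box_v$ is immediate from the Kripke clauses. Propositional tautologies are clearly valid.

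\emph{The horizontal and vertical instances.} Let $\varphi\in\mathsf L_h$ and consider $\varphi_h$, whose only modality is $\hd$. Fix a valuation $\val\cdot$ on $\mathcal F$. Since $\varphi$ is unimodal, $\val{\varphi_h}$ coincides with the truth set of $\varphi$ in the unimodal model $(W,\prec_h,\val\cdot)$; a routine induction on $\varphi$ shows this. The frame $(W,\prec_h)$ belongs to the class of $\mathsf L_h$-frames. The one point to be careful about is that this uses the convention identifying a logic with its class of frames, so frames in that class validate every theorem of $\mathsf L_h$ by definition. Hence $\val{\varphi_h}=W$. The vertical case for $\mathsf L_v$ is symmetric.

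\emph{The expanding product axioms.} Left commutativity $\vd\hd p\to\hd\vd p$ is valid on $\mathcal F$ by Proposition~\ref{propLC}(1), since $\mathcal F$ satisfies (LC). For Church--Rosser, the axiom in Definition~\ref{defExpandingCommutator} is written $\vd\hbx p\to\hbx\vd p$, while Proposition~\ref{propLC}(2) gives validity of $\hd\vbx p\to\vbx\hd p$ from (CR). These two forms are interderivable in $\sf K$ for both modalities by contraposition and substitution of $\neg p$, as noted in the remark after Proposition~\ref{propLC}. Alternatively, we can verify the axiom directly. Suppose $x\Vdash\vd\hbx p$, so there is $x'$ with $x\prec_v x'\Vdash\hbx p$, and let $x\prec_h y$. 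By (CR) there is $y'$ with $x'\prec_h y'\succ_v y$. Then $y'\Vdash p$, hence $y\Vdash\vd p$. Since $y$ was arbitrary, $x\Vdash\hbx\vd p$.

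No step is a real obstacle. The only points needing care are the reduction of the unimodal instances to the component frames and the matching of the Church--Rosser axiom's syntactic form with Proposition~\ref{propLC}. Transitivity is used nowhere, consistent with the remark preceding the statement.
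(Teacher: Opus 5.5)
Your proof is correct and follows the same route as the paper, which simply notes that the component instances and rules are routine and that the two expanding product axioms are valid by Proposition~\ref{propLC}. Your extra step matching the Church--Rosser axiom $\vd\hbx p\to\hbx\vd p$ of Definition~\ref{defExpandingCommutator} with the form $\hd\vbx p\to\vbx\hd p$ treated in Proposition~\ref{propLC} (via the noted equivalence, or directly from (CR)) fills in a detail the paper leaves implicit.
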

\begin{proof}
It suffices to show that the expanding product axioms are valid over ${\sf L}_h\rtimes {\sf L}_v$. The only interesting cases are left commutativity and the Church–Rosser, whose soundness has already been shown in Proposition~\ref{propLC}.
\end{proof}

The following is already stated for expanding domain frames by Gabelaia et al.~\cite{pml}, but it is instructive to note that it can be derived using forward-confluent frames as an intermediary.

\begin{corollary}\label{SoundFor-expan&emdedd}
Let ${\sf L}_h \in \{\sf K4, GL\}$. The expanding commutator $[{\sf L}_h,\sf GL]^{\sf e}$ is sound for $({\sf L}_h \times {\sf GL})^{\sf e}$ and $({\sf L}_h \rtimes {\sf GL})^{\sf e}$.
\end{corollary}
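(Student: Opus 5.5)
We derive the corollary from Proposition~\ref{SoundFor-Conflu} via the transfer lemmas already established. The structure is contrapositive: if $\varphi$ is a theorem of $[{\sf L}_h,{\sf GL}]^{\sf e}$ but fails somewhere in an expanding (or embedding) domain model of the appropriate class, we transport that failure to a forward-confluent model in ${\sf L}_h\rtimes{\sf GL}$, where $\varphi$ must hold by Proposition~\ref{SoundFor-Conflu}, a contradiction.

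We begin with expanding domain frames. Suppose $\neg\varphi$ is satisfied in some $({\sf L}_h\times{\sf GL})^{\sf e}$-model. To apply Lemma~\ref{ExtoFCModel}, we note that the classes of ${\sf K4}$-frames and of ${\sf GL}$-frames (transitive, respectively transitive and conversely well-founded) are closed under disjoint unions. Both properties are checked componentwise, since no relation connects distinct summands. The lemma then yields a model of ${\sf L}_h\rtimes{\sf GL}$ satisfying $\neg\varphi$. We must also check that $(\overline W,\prec_h)$ is an ${\sf L}_h$-frame and each $(\overline W,\prec_v)$ a ${\sf GL}$-frame, which the lemma states. Here $(\overline W,\prec_h)$ is a disjoint union over $u$ of the restrictions of $(W,\hr)$ to $\{x : u\in W^x\}$. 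By the subframe condition this set is upward closed, so transitivity and converse well-foundedness are inherited. It follows that $\varphi$ is not a theorem.

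For embedding domain frames we cannot invoke Lemma~\ref{EquiEmbe&Expan}, since it requires a tree. Instead we mimic the construction of Lemma~\ref{ExtoFCModel} directly. Let $\overline W=\{(x,u): u\in W^x\}$. Set $(x,u)\prec_h(y,w)$ iff $x\hr y$ and $w=e_{xy}(u)$, and set $(x,u)\prec_v(x,w)$ iff $u\vr^x w$, keeping the valuation pointwise. An induction on formulas shows that truth is preserved, using the ${\sf e}$-model clause for $\hd$.

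The properties to verify are the following.
\begin{enumerate}[label=(\alph*)]
\item $\prec_h$ is transitive by condition~\ref{itETrans}, $e_{yz}\circ e_{xy}=e_{xz}$.
\item $\prec_h$ is conversely well-founded when $\hr$ is, since any $\prec_h$-chain projects onto a $\hr$-chain.
\item $\prec_v$ is a disjoint union of the frames $f(x)$, hence a ${\sf GL}$-frame.
\item (LC): from $(x,u)\prec_v(x,u')\prec_h(y,e_{xy}(u'))$ we take $(y,e_{xy}(u))$, using that $e_{xy}$ preserves $\vr$.
\item (CR) is symmetric: from $(x,u')\succ_v(x,u)\prec_h(y,e_{xy}(u))$ we take $(y,e_{xy}(u'))$.
\end{enumerate}
With these in place, Proposition~\ref{SoundFor-Conflu} again gives the contradiction.

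The main obstacle is the embedding case, specifically the transitivity and well-foundedness of the induced horizontal relation, since these are where condition~\ref{itETrans} is genuinely needed. Everything else is routine verification.
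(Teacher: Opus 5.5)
Your proof is correct, but for the embedding-domain half it takes a different route from the paper. The paper's proof is a one-line appeal to Proposition~\ref{SoundFor-Conflu} together with Lemmas~\ref{ExtoFCModel} and~\ref{EquiEmbe&Expan}. Counter-models in expanding domain frames are moved to forward-confluent ones by Lemma~\ref{ExtoFCModel}, and embedding domain frames are meant to be reduced to expanding ones by Lemma~\ref{EquiEmbe&Expan}.

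As you point out, item~\ref{2EquiEmbe&Expan} of that lemma applies only when the horizontal frame is a tree. Read literally, the paper's argument therefore covers only tree-shaped ${\sf e}$-frames. The general case needs an unstated unravelling step: replace $(W,\hr)$ by its tree of finite $\hr$-paths, which preserves transitivity and converse well-foundedness, and lift the embeddings by composition using condition~\ref{itETrans}.

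You instead generalise the construction of Lemma~\ref{ExtoFCModel} directly to ${\sf e}$-models by setting $(x,u)\prec_h(y,e_{xy}(u))$. Your checks are all correct: transitivity via $e_{yz}\circ e_{xy}=e_{xz}$, converse well-foundedness by projection, and (LC) and (CR) because $e_{xy}$ preserves $\vr$. This gives a self-contained argument with no tree hypothesis and no unravelling.

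It also makes your first half redundant. An expanding domain model is an ${\sf e}$-model whose embeddings are inclusions (item~\ref{1EquiEmbe&Expan} of Lemma~\ref{EquiEmbe&Expan}), and on such models your construction is exactly that of Lemma~\ref{ExtoFCModel}. Your side check in that half is still a fair point. There the horizontal part is a disjoint union of upward-closed restrictions of $(W,\hr)$, which closure under disjoint unions alone does not literally cover.

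The paper's route is shorter because it reuses lemmas already on the page, at the cost of that implicit reduction to trees.
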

\begin{proof}
This follows immediately from Proposition \ref{SoundFor-Conflu} together with Lemmas \ref{ExtoFCModel} and \ref{EquiEmbe&Expan}. 
\end{proof}

\section{Incomplete logics}\label{sec:IncompleteLogics}

In this section, we will use forward-confluent models to show that some expanding commutators are incomplete for their class of expanding domain models. We begin with cases where the horizontal logic is linear.

\newcommand{\lin}[1]{\ensuremath{\text{\textcal{#1}}^{*}}}
\newcommand{\frk}[1]{\ensuremath{\text{\textcal{#1}}^{\Ydown}}}

Below, for $n\in\mathbb N$, let $\frk n $ be the irreflexive `$n$-fork' frame consisting of $n+1$ points: a root with $n$ successors that themselves do not have further successors. Let $\lin n$ be the reflexive linearly ordered frame with $n$ elements.

\begin{thm}\label{theorem:IncompletenessUsingLinearity}
Let ${\sf L}_h\supseteq \sf K4.3$ be valid on $\lin 3$ and ${\sf L}_v$ be any logic valid on $\frk 2$. Then, 
$[{\sf L}_h, {\sf L}_v]^{\sf e}$ is incomplete for $({\sf L}_h \times {\sf L}_v)^{\sf e}$.

In particular, all of $[{\sf Grz.3}, {\sf GL}]^{\sf e}$, $[{\sf K4.3}, {\sf GL}]^{\sf e}$, and $[{\sf K4.3}, {\sf K}]^{\sf e}$ are incomplete.
\end{thm}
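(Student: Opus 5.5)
The plan is to separate the two semantics with a single formula $\chi$: it will be valid on every $({\sf L}_h\times{\sf L}_v)^{\sf e}$-frame, yet refuted on some forward-confluent frame in ${\sf L}_h\rtimes{\sf L}_v$. By Proposition~\ref{SoundFor-Conflu}, $[{\sf L}_h,{\sf L}_v]^{\sf e}$ is sound for ${\sf L}_h\rtimes{\sf L}_v$, so $\chi$ is not derivable, and this gives incompleteness. The three particular cases then follow by checking the hypotheses. ${\sf Grz.3}$ and ${\sf K4.3}$ are valid on the reflexive chain $\lin 3$. ${\sf GL}$ and ${\sf K}$ are valid on the irreflexive fork $\frk 2$, which is finite, transitive and conversely well-founded.

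\textbf{Which feature of expanding frames $\chi$ should capture.} Consider an expanding domain frame whose horizontal frame is a chain, and let $u\vr^x w_1$ and $u\vr^x w_2$ with $w_1\neq w_2$. The horizontal futures of $(x,w_1)$ and $(x,w_2)$ are then indexed by the \emph{same} chain $\{y\mid x\hr y\}$. Moreover, at each $y$ in that chain both $w_1$ and $w_2$ are still vertical successors of $u$, by the subframe condition.

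So the horizontal "clocks" running above the two vertical successors are synchronised. In particular, whatever happens at horizontal stages along $w_1$ and along $w_2$ is visible together from $(y,u)$ at a common stage $y$. Linearity is essential here: it is what forces the stages to be comparable.

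In a forward-confluent model, (LC) and (CR) only guarantee that each step above $w_1$ or above $w_2$ can be matched by \emph{some} step above $u$. Nothing forces the two chains to be matched coherently with one another.

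\textbf{Steps.}
\begin{enumerate}
\item Write $\chi$ using two fresh variables $p,q$. Its antecedent will say that $u$ has a vertical successor along whose horizontal future $p$ occurs and is then followed by $q$, and another vertical successor where this order is reversed. Its consequent will say that some horizontal successor of $u$ sees both events vertically in a way compatible with a single linear time.
\item Verify validity of $\chi$ on $({\sf L}_h\times{\sf L}_v)^{\sf e}$-frames. The only ingredients are:
\begin{itemize}
\item linearity (via ${\sf K4.3}\subseteq{\sf L}_h$), to compare the two relevant horizontal stages;
\item the subframe condition, to transport both vertical successors to the later stage.
\end{itemize}
\item Build the countermodel $\mathcal M$. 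Its horizontal frame is a disjoint union of copies of $\lin 3$ (or of its generated subframes and $p$-morphic images, on which ${\sf L}_h$ remains valid). Every vertical frame is a copy of $\frk 2$ (or $\frk 1$, a $p$-morphic image of $\frk 2$) or a singleton. Validity of ${\sf L}_h$ and ${\sf L}_v$ is preserved under disjoint unions, so $\mathcal M\in{\sf L}_h\rtimes{\sf L}_v$.

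The model has a fork root $r$ with children $a,b$, plus horizontal chains above $r$, $a$ and $b$. These chains are "interleaved inconsistently": the stages at which $p$ and $q$ occur are attached vertically to different stages above $r$.
\item Check (LC) and (CR) case by case, including the reflexive pairs. Then compute that $r$ refutes $\chi$.
\end{enumerate}

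\textbf{Main obstacle.} The difficulty lies in steps 1 and 3 together: $\chi$ and $\mathcal M$ have to be found jointly. Naive candidates such as $\vd\hbx p\wedge\vd\hbx\neg p\to\hbx(\vd p\wedge\vd\neg p)$ are valid on all expanding frames regardless of linearity. They fail to be refuted because in a forward-confluent model the same point would have to satisfy both $p$ and $\neg p$.

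A first attempt at misaligned chains also fails. In it, $r_1$ sees $a_1,b_2$ and $r_2$ sees $a_2,b_1$. This violates (CR) because of horizontal reflexivity: $b_2\succ_v r_1\prec_h r_2$ needs a $\prec_h$-successor of $b_2$ seen by $r_2$.

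So the realistic first try is as follows. Keep the vertical matchings monotone along each chain, and let the failure of synchronisation come from collapse instead. Concretely, allow the chains above $a$ and $b$ to have different lengths (one of length $3$, one of length $2$). The stage above $r$ is then forced by (CR) to see different-length prefixes, and $\chi$ is chosen to detect exactly this length mismatch via nested $\hd$'s. I would iterate on this until both (LC) and (CR) check out and $r\not\Vdash\chi$.
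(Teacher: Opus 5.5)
The gap is that the theorem's whole content, the separating formula and the countermodel, is missing. Your overall strategy matches the paper's: find a formula valid on every $({\sf L}_h\times{\sf L}_v)^{\sf e}$-frame but refuted on some forward-confluent ${\sf L}_h\rtimes{\sf L}_v$-model, then invoke Proposition~\ref{SoundFor-Conflu}. Your check of the three particular cases is also fine. But Step~1 describes $\chi$ only informally, Step~3 gives only a shape, and you end by saying you would iterate until (LC) and (CR) check out. So nothing has actually been established.

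The heuristic you settle on also points away from what works. You discard crossed vertical matchings because of the (CR) failure $b_2\succ_v r_1\hr r_2$. That failure is cured simply by adding the edge $r_2\vr b_2$; horizontal reflexivity at $b_2$ then supplies the required witness. Instead you move to monotone matchings with chains of unequal length, for which you give no candidate formula and no evidence that a refutation exists.

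The missing idea is that a single vertical successor suffices: the clash is between the horizontal future of $u$ itself and that of one vertical successor $w$. The paper takes
\[
\varphi := \hd p\wedge \vd\hd q \to \hd(p\wedge \vd \diamonddot_h q) \vee \hd(\diamonddot_h p\wedge \vd q).
\]
Suppose $(x,u)\Vdash \hd p\wedge\vd\hd q$ in an expanding model. Then there is $x\hr y$ with $(y,u)\Vdash p$, and there are $u\vr^x w$ and $x\hr z$ with $(z,w)\Vdash q$. Weak connectedness compares $y$ and $z$, and the subframe condition gives $u\vr w$ at both stages. This yields the first disjunct if $y\hr z$ and the second otherwise.

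For the countermodel, take two reflexive three-element chains $(0,j)\hr(1,j)\hr(2,j)$ for $j\in\{0,1\}$. Add the vertical edges $(0,0)\vr(0,1)$, $(1,0)\vr(2,1)$, $(2,0)\vr(1,1)$ and $(2,0)\vr(2,1)$. Make $p$ true only at $(1,0)$ and $q$ only at $(1,1)$.

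This matching is crossed, and the edge $(2,0)\vr(2,1)$ is exactly the repair described above. It gives $(2,1)$ two vertical predecessors, so the vertical frame is not a disjoint union of forks as your Step~3 stipulates. It is, however, a p-morphic image of a disjoint union of copies of $\frk 2$ and $\frk 1$, so ${\sf L}_v$ remains valid.

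At $(0,0)$ the antecedent holds. Yet the only $p$-point, $(1,0)$, sees vertically only $(2,1)$, where $\diamonddot_h q$ fails. And the only point seeing $q$ vertically, $(2,0)$, has no $p$ in its horizontal future.
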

\begin{proof}
Consider the formula
\[
\varphi := \hd p\wedge \vd\hd q \to \hd(p\wedge \vd \diamonddot_h   q) \vee \hd(\diamonddot_h p\wedge \vd    q). 
\]
We claim that $\varphi$ is valid over $(\mathsf{K4.3}\times\mathsf K)^{\sf e}$, whence over $(\mathsf L_h\times\mathsf  L_v)^{\sf e}$.

To see this, fix a $ (\mathsf{K4.3}\times\mathsf K)^{\sf e}$-model $\mathfrak M $   and assume that $ (x,u)\Vdash \hd p\wedge \vd\hd q$.
From the left conjunct, we find some $(y,u)$ with $y \succ_h x$ such that $(y,u)\Vdash p$, while the right conjunct yields $(z,w)\Vdash q$ for $x \prec_h z$ and $u \prec_v^{z} w$. Since the horizontal frame satisfies the $\sf{3}$ axiom and is transitive, we get either $y\prec_h z$ or $z=y \vee z \prec_h y$, corresponding to $\hd(p\wedge \vd \diamonddot_h q)$ and $\hd(\diamonddot_h p\wedge \vd q)$, respectively.

We now consider the forward-confluent model $\mathcal{M}_1 := (W, \prec_h, \prec_v, V)$ defined by
\begin{itemize}
    \item $W := \{0,1,2\} \times \{0,1\}$;
    \item $(x,y) \hr (x',y')$ iff $x\leq x'$ and $y=y'$;
    \item $(x,y) \vr (x',y')$ iff $(x,x') \in \{(0,0),(1,2),(2,1), (2,2)\}$ and $y<y'$;
    \item $V(p) := \{(1,0)\}$ and $V(q) := \{(1,1)\}$.  
\end{itemize}
\begin{figure}[h] 
\centering
\small
\begin{tikzpicture}
	\begin{pgfonlayer}{nodelayer}
		\node [style=sroot, label=left:{}] (1) at (-1.5, 2.5) {};
		\node [style=sroot, label=above:{}] (2) at (-1.5, 3.7) {};
        \node [style=sroot, label=left:{}] (3) at (0, 3.7) {$q$};
		\node [style=sroot, label=right:{}] (4) at (0, 2.5) {$p$};
		\node [style=sroot, label=above:{}] (5) at (1.5, 3.7) {};
		\node [style=sroot, label=right:{}] (6) at (1.5, 2.5) {};

        \draw [style=edgeto, red, dashed] (1) to (2);
		\draw [style=edgeto] (2) to (3);
        \draw [style=edgeto] (3) to (5);
		\draw[style=edgeto] (1) to (4);
		\draw[style=edgeto] (4) to (6);
        \draw [style=edgeto, red, dashed] (6) to (5);
        \draw [style=edgeto, red, dashed] (6) to (3);
        \draw [style=edgeto, red, dashed] (4) to (5);
        \draw[style=edgeto] (2) edge[loop above] ();
        \draw[style=edgeto] (3) edge[loop above] ();
        \draw[style=edgeto] (5) edge[loop above] ();
        \draw[style=edgeto] (1) edge[loop below] ();
        \draw[style=edgeto] (4) edge[loop below] ();
        \draw[style=edgeto] (6) edge[loop below] ();
	\end{pgfonlayer}
\end{tikzpicture}
\caption{The model $\mathcal{M}_1$ with the $\hr$-transitive closure not displayed.}
\label{fig1}
\end{figure}

One should carefully check that the model indeed is a forward-confluent model, that is, both (LC) and (CR) are satisfied. We observe that only $\hr$-reflexivity of the points $(2,0)$ and $(2,1)$ is strictly needed to comply with (LC) and (CR). We made the other worlds $\hr$-reflexive too as to obtain a $\sf Grz.3$ model.

It is straightforward to verify that $\mathcal{M}_1, (0, 0) \not \Vdash \varphi$ and using the assumption that $\lin 3\models \mathsf L_h$ and $\frk 2\models \mathsf L_v$, the model $\mathcal M_1$ is a $\mathsf L_h\rtimes \mathsf L_v$-model. By Proposition \ref{SoundFor-Conflu}, we obtain $\not\vdash_{[{\sf L}_h, {\sf L}_v]^{\sf e}} \varphi$ for ${\sf L}_h$ and ${\sf L}_v$ as in the statement of this theorem. Since $\varphi$ holds on any  $(\mathsf L_h\times \mathsf L_v)^{\sf e}$-model, we conclude the incompleteness result. 
\end{proof}

The next theorem shows that we do not necessarily need linearity to obtain incompleteness: it can be replaced by requiring existence of strict $\hr$-maximal points.

\begin{thm}
Let $\mathsf L_h \supseteq \sf Grz$ be valid on $\lin 2$ and ${\sf L}_v$ be any logic that is valid on $\frk 2$.
Then, the logic $[\mathsf L_h , \mathsf L_v]^{\sf e}$ is incomplete for its expanding domain semantics. 

In particular, all of $[{\sf{Grz}} , {\sf K}]^{\sf e}$,
$[{\sf Grz} , {\sf K4}]^{\sf e}$, and
$[{\sf Grz} , {\sf GL}]^{\sf e}$ are incomplete for their expanding domain semantics.
\end{thm}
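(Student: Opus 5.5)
We propose to imitate the proof of Theorem~\ref{theorem:IncompletenessUsingLinearity}. First we exhibit a formula that is valid over $({\sf Grz}\times{\sf K})^{\sf e}$, and hence over $(\mathsf L_h\times\mathsf L_v)^{\sf e}$. Then we refute it in a small forward-confluent $\mathsf L_h\rtimes\mathsf L_v$-model, so that Proposition~\ref{SoundFor-Conflu} shows it is not derivable in $[\mathsf L_h,\mathsf L_v]^{\sf e}$.

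The feature of ${\sf Grz}$ we exploit is Noetherianity: every point has a horizontally maximal successor, meaning one whose only $\hr$-successor is itself. The candidate formula is
\[
\varphi := \hd \vbx(\hd q \to q).
\]
To check validity, fix an expanding domain model with $\hr$ reflexive, transitive and Noetherian, and a point $(x,u)$. By Noetherianity there is $y\succeq_h x$ that is $\hr$-maximal, so its only $\hr$-successor is $y$, and $(y,u)$ is a horizontal successor of $(x,u)$ by reflexivity. Now let $u\vr^y w$. The horizontal successors of $(y,w)$ are the points $(z,w)$ with $y\hr z$, and the only such $z$ is $y$. Hence $(y,w)\Vdash \hd q$ implies $(y,w)\Vdash q$, so $(y,u)\Vdash \vbx(\hd q\to q)$. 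The essential point is that in expanding domain frames horizontal maximality of $y$ is inherited by every vertical successor $(y,w)$. Forward-confluent frames lack this, and that is where the countermodel comes from.

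For the countermodel we take $W=\{0,1\}\times\{0,1\}$ and define the relations and valuation as follows.
\begin{itemize}
\item $(i,j)\hr(i',j')$ iff $i\le i'$ and $j=j'$. This gives two disjoint copies of $\lin 2$.
\item $(i,0)\vr(i',1)$ iff $(i,i')\in\{(0,0),(1,0),(1,1)\}$, and there are no other vertical edges.
\item $V(q)=\{(1,1)\}$.
\end{itemize}
At $(0,0)$ the $\hr$-successors are $(0,0)$ and $(1,0)$. Both are $\vr$-related to $(0,1)$, where $\hd q$ holds (via $(1,1)$) but $q$ fails. Hence $(0,0)\not\Vdash\varphi$.

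We then verify (LC) and (CR). The only nontrivial instances of (LC) are $(0,0)\vr(0,1)\hr(1,1)$ and $(1,0)\vr(0,1)\hr(1,1)$, both witnessed by $y=(1,0)$. The only nontrivial instance of (CR) starts from $(0,1)\succ_v(0,0)\hr(1,0)$ and is witnessed by $y'=(0,1)$ or $y'=(1,1)$.

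Finally we check the component logics. Horizontally, the frame is a disjoint union of copies of $\lin 2$, so it validates $\mathsf L_h$. Vertically, the point-generated subframes are the 2-fork rooted at $(1,0)$, a single edge (a generated subframe of $\frk 2$), and singletons. Validity on a frame is equivalent to validity on all of its point-generated subframes, and each of these validates $\mathsf L_v$, so the vertical frame does too.

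The main obstacle is this last bookkeeping step: confirming that the vertical frame really validates every logic valid on $\frk 2$, which may not be closed under arbitrary constructions. It is handled by the generated-subframe argument rather than by assuming closure of $\mathsf L_v$ under unions. The particular instances $[{\sf Grz},{\sf K}]^{\sf e}$, $[{\sf Grz},{\sf K4}]^{\sf e}$ and $[{\sf Grz},{\sf GL}]^{\sf e}$ follow because $\lin 2$ is a ${\sf Grz}$-frame and $\frk 2$ is irreflexive, transitive and finite, hence a ${\sf GL}$-frame.
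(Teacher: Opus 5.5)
Your proof is correct, but it uses a different witness formula and a different countermodel from the paper's. The paper refutes
\[
\psi := p\wedge \hbx (\boxdot_v p\vee \boxdot_v \neg p) \to \hd (p\wedge \vbx (q\to \hbx (p\to q))).
\]
Its validity proof is a Grz-style argument: by Noetherianity, choose a horizontally \emph{last} successor $y$ at which $\boxdot_v p$ holds. Then $p$ fails at every strict horizontal successor of every vertical successor of $(y,x')$.

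The paper's countermodel is the three-point frame $\frk 2$, with a horizontal edge between its two leaves and the root horizontally isolated.

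Your McKinsey-type formula $\hd\vbx(\hd q\to q)$ isolates the phenomenon more directly. Its validity needs only a horizontally maximal reflexive dead end, plus the observation that such maximality is inherited by all vertical successors in expanding domain frames but not in forward-confluent ones. Your formula is in fact also refuted on the paper's three-point frame if you put $q$ only at the horizontally top leaf. That would make the vertical frame literally $\frk 2$ and remove the bookkeeping you identified as the main obstacle.

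One correction to that bookkeeping. The single edge generated by $(0,0)$ is \emph{not} a generated subframe of $\frk 2$: any generated subframe containing the root is the whole fork. It is, however, a p-morphic image of $\frk 2$ (send both leaves to the top of the edge), and validity of any set of formulas is preserved under p-morphic images. Equivalently, your whole vertical frame is a p-morphic image of $\frk 2\sqcup\frk 2$.

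Validity of a logic on a frame is in any case always preserved under disjoint unions, generated subframes and p-morphic images. So there was no real danger here, only a mislabelled justification.
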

\begin{proof}
We will see that
\[
\psi \ := \ p\wedge \hbx (\boxdot_v  p\vee \boxdot_v \neg p )  \to \hd (p\wedge \vbx (q\to \hbx (p\to q))) 
\]
is valid over $ (\sf Grz \times K)^{\sf e}$.
To this end, we consider an arbitrary $ (\sf Grz \times K)^{\sf e}$-model $\mathfrak M$ and some $(x,x')$ with $\mathfrak{M}, (x,x')\Vdash p\wedge \hbx (\boxdot_v  p\vee \boxdot_v \neg p )$.
Since the horizontal accessibility is reflexive, we will write it as $\peq_h$ and reserve $\hr$ for its irreflexive part, i.e. $x\hr y$ if $x\peq_h y$ but $y\not\peq_h x$.

Since the horizontal frame is Noetherian and transitive, there is $y$ with $x \peq _hy$ so that 
$(y,x')\Vdash \boxdot_v p$ but, whenever $(y,x')\hr (z,x')$, we have that $(z,x')\Vdash \boxdot_v \neg p$. It is now easy to see that $(y,x')\Vdash \vbx (q\to \hbx (p\to q))$: if $x' \prec_v^yu$, by the design of $\sf e$-frames, any strict $\prec_h$ successor of $(y,u)$ cannot validate $p$ and in particular validates $p\to q$. If $(y,u)\nVdash q$, then trivially $(y,u)\Vdash q \to \hbx (p \to q)$ and if $(y,u)\Vdash q$, then $(y,u)\Vdash p\to q$ whence $(y,u)\Vdash \hbx (p\to q)$ and $(y,u)\Vdash q \to \hbx (p\to q)$. We conclude $(y,x')\Vdash p\wedge \vbx (q\to \hbx (p\to q))$ whence $(x,x') \Vdash p\wedge \hbx (\boxdot_v  p\vee \boxdot_v \neg p ) \to \hd (p\wedge \vbx (q\to \hbx (p\to q)))$ as was to be shown.

With $\mathsf L_h$ and $\mathsf L_v$ being as in the statement of the theorem, we now consider the $\mathsf L_h\rtimes \mathsf L_v$-model $\mathcal{M}_2 = (W,\peq_h,\prec_v,\val\cdot)$ as depicted in Figure \ref{fig2}.
\begin{figure}[h] 
\centering
\small
\begin{tikzpicture}
	\begin{pgfonlayer}{nodelayer}
		\node [style=sroot, label=left:{}] (1) at (-1.2, 3.8) {$p, q$};
		\node [style=sroot, label=above:{}] (2) at (1.2, 3.8) {$p$};
        \node [style=sroot, label=left:{}] (3) at (0, 2.5) {$p$};

        \draw [style=edgeto] (1) to (2);
		\draw [style=edgeto, red, dashed] (3) to (1);
        \draw [style=edgeto, red, dashed] (3) to (2);
        \draw[style=edgeto] (1) edge[loop above] ();
        \draw[style=edgeto] (2) edge[loop above] ();
        \draw[style=edgeto] (3) edge[loop below] ();
	\end{pgfonlayer}
\end{tikzpicture}
\caption{The model $\mathcal{M}_2$.}
\label{fig2}
\end{figure}
More concretely, we have that 
\begin{itemize}
    \item $W :=\{0,1,2 \}$;
    \item  $1\peq_h 2$ and $i\peq_h i$ for $i\in \{0,1,2 \}$;
    \item $0\vr 1$ and $0\vr 2$;
    \item $V(p) :=\{0,1,2\}$ and  $V(q) := \{1\}$.
\end{itemize}
Using the assumption that $ \lin 2\Vdash \mathsf L_h$ and $\frk 2\Vdash \mathsf L_v$, it is quite simple to see that $\mathcal{M}_2$ is a $\mathsf L_h\rtimes \mathsf L_v$-model and that $\mathcal{M}_2, 0\not \Vdash \psi$.
\end{proof}

Thus, forward-confluent models do not in general yield the same logics as their expanding domain counterparts.
However, as we will see in the rest of the paper, there are many cases where this {\em is} the case, and in such cases, forward-confluent models will prove to be a useful tool.

\section{The canonical forward-confluent model} \label{sec:Thecanonicalmodel}

Now that we have exhibited some incomplete expanding commutators, it is time to focus on two prominent cases where these are complete: namely, $[{\sf K4},{\sf GL}]^{\sf e}$ and $[{\sf GL},{\sf GL}]^{\sf e}$. A go-to technique for proving the completeness of various modal logics is to construct their canonical model, but it is well known that the canonical model for $\sf GL$ is not Noetherian, nor is the canonical model for an expanding commutator based on an expanding domain (or even embedding domain) frame in any reasonable sense. Nevertheless, constructing it will prove very useful as an intermediate step; notably, the canonical model of an expanding commutator is based on a forward-confluent frame, our starting point for extracting $\sf e$-frames (and, hence, expanding domain frames).

As with any other (poly)modal logic, a set $\Phi$ of formulas is {\em consistent} in $[{\mathsf L_h},{\mathsf L_v}]^{\sf e}$ if $\Phi \nvdash_{[{\mathsf L_h},{\mathsf L_v}]^{\sf e}} \perp$. The set $\Phi$ is {\em maximally consistent} in $[{\mathsf L_h},{\mathsf L_v}]^{\sf e}$ if it is consistent and maximal with respect to set inclusion among all consistent sets.

\begin{defn}
Let $\mathsf L_h$, $\mathsf L_v$ be unimodal logics. The \emph{canonical model for $[\mathsf{L}_h,\mathsf{L}_v]^{\sf e}$} is the tuple
\[\mathcal M^{\rm c} := (W^{\rm c},\hr^{\rm c},\vr^{\rm c}, \val\cdot ^{\rm c}),\]
where
\begin{itemize}
    \item $W^{\rm c}$ is the set of all maximally consistent sets in $[\mathsf{L}_h,\mathsf{L}_v]^{\sf e}$;
    \item $\prec_\alpha^{\rm c}$ for $\alpha \in \{h,v\}$ is the binary relation on $W^{\rm c}$ defined by $\Phi \prec_\alpha^{\rm c} \Psi$ iff for all $\varphi \in \langfull$, $\varphi \in \Psi$ implies $\Diamond_\alpha \varphi \in \Phi$;
    \item $\val p^{\rm c} := \{\Phi \in W^{\rm c} \mid p \in \Phi \}$ for $p \in \mathrm{Prop}$.   
\end{itemize}
\end{defn}

The standard properties of canonical models can be adapted to the present bimodal setting. Since the arguments follow the usual lines, we omit the proofs.

\begin{lem} \label{BoxCanonicalModels} 
Let $\Phi \in W^{\rm c}$ and $\alpha \in \{h,v\}$. Then, $\Phi \prec_\alpha^{\rm c} \Psi$ iff for any $\varphi \in \langfull$, $\Box_\alpha \varphi \in \Phi$ implies $\varphi \in \Psi$. 
\end{lem}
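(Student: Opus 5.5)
The plan is to show the two directions separately. Throughout, I will use only standard facts about maximally consistent sets: closure under $\vdash$, exactly one of $\varphi,\neg\varphi$ in each set, and the definitional equivalence $\Diamond_\alpha\varphi\leftrightarrow\neg\Box_\alpha\neg\varphi$ (recall $\Box_\alpha$ abbreviates $\neg\Diamond_\alpha\neg$).

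For the left-to-right direction, assume $\Phi\prec^{\rm c}_\alpha\Psi$ and $\Box_\alpha\varphi\in\Phi$, and suppose towards a contradiction that $\varphi\notin\Psi$. By maximality $\neg\varphi\in\Psi$, so the definition of $\prec^{\rm c}_\alpha$ gives $\Diamond_\alpha\neg\varphi\in\Phi$. But $\Box_\alpha\varphi=\neg\Diamond_\alpha\neg\varphi\in\Phi$, so $\Phi$ is inconsistent. Strictly speaking we need $\neg\Diamond_\alpha\neg\neg\varphi$ versus $\Diamond_\alpha\neg\varphi$, and these differ only by a double negation. I would bridge this with Remark~\ref{RemarkK4xK4}(\ref{3RemarkK4xK4}) applied to the tautology $\neg\neg\varphi\leftrightarrow\varphi$, which gives $\vdash\Diamond_\alpha\neg\neg\neg\varphi\leftrightarrow\Diamond_\alpha\neg\varphi$.

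For the right-to-left direction, assume that $\Box_\alpha\varphi\in\Phi$ implies $\varphi\in\Psi$ for all $\varphi$, and let $\varphi\in\Psi$. If $\Diamond_\alpha\varphi\notin\Phi$, then $\neg\Diamond_\alpha\varphi\in\Phi$. Using Remark~\ref{RemarkK4xK4}(\ref{3RemarkK4xK4}) again, $\vdash\neg\Diamond_\alpha\varphi\to\neg\Diamond_\alpha\neg\neg\varphi$, that is, $\Box_\alpha\neg\varphi\in\Phi$. The hypothesis then yields $\neg\varphi\in\Psi$, contradicting the consistency of $\Psi$. Hence $\Diamond_\alpha\varphi\in\Phi$, so $\Phi\prec^{\rm c}_\alpha\Psi$.

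There is no real obstacle here. The only point needing care is the double-negation bookkeeping between $\Diamond_\alpha$ and the abbreviation $\Box_\alpha$, which is handled by congruence (monotonicity of $\Diamond_\alpha$ under provable implication in both directions). Nothing specific to the commutator axioms is used, so the argument works uniformly for $\alpha\in\{h,v\}$.
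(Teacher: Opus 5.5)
Your proof is correct and is the standard argument; the paper gives no proof of its own and simply cites \cite[Lemma 4.19]{BRV01}, which argues the same way. One small correction: the double-negation remark in your left-to-right direction is unnecessary, because $\Box_\alpha\varphi$ is literally $\neg\Diamond_\alpha\neg\varphi$ and so contradicts $\Diamond_\alpha\neg\varphi$ directly; the congruence step via Remark~\ref{RemarkK4xK4}(\ref{3RemarkK4xK4}) is needed only in the converse direction, where you use it correctly.
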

\begin{proof}
For a proof, see, e.g., \cite[Lemma 4.19]{BRV01}.
\end{proof}

As usual, one of the most crucial `basic' properties of the canonical model is the witness lemma, both in its standard version and its modified version for $\sf GL$.

\begin{lem}[Witness property] \label{WitnessCanonicalModel}
Let $\Phi \in W^{\rm c}$, $\alpha \in \{h,v\}$ and $\varphi \in \langfull$. If $\Diamond_\alpha \varphi \in \Phi$, then there exists some $\Psi \in W^{\rm c}$ such that $\Phi \prec_\alpha^{\rm c} \Psi$ and $\varphi  \in \Psi$. If moreover $\mathsf L_\alpha$ extends $\mathsf{GL}$, then we may choose $\Psi$ such that $\varphi \wedge \neg \Diamond  _\alpha \varphi\in \Psi$. 
\end{lem}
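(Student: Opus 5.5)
The first claim is the standard existence lemma, and I would prove it by the usual Lindenbaum argument. Given $\Diamond_\alpha\varphi\in\Phi$, consider
\[\Psi_0:=\{\varphi\}\cup\{\psi \mid \Box_\alpha\psi\in\Phi\}.\]
I would show that $\Psi_0$ is consistent in $[\mathsf L_h,\mathsf L_v]^{\sf e}$. Suppose not. Then there are $\psi_1,\dots,\psi_n$ with $\Box_\alpha\psi_i\in\Phi$ and $\vdash(\psi_1\wedge\dots\wedge\psi_n)\to\neg\varphi$.

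From this I would derive a contradiction using Remark~\ref{RemarkK4xK4}. Necessitation and the $\sf K$ axiom give $\vdash\Box_\alpha(\psi_1\wedge\dots\wedge\psi_n)\to\Box_\alpha\neg\varphi$. By item~\ref{1RemarkK4xK4} of that remark, $\Box_\alpha\psi_1\wedge\dots\wedge\Box_\alpha\psi_n\to\Box_\alpha\neg\varphi$ is then derivable. Hence $\Box_\alpha\neg\varphi\in\Phi$, i.e.\ $\neg\Diamond_\alpha\varphi\in\Phi$, because $\Diamond_\alpha\leftrightarrow\neg\Box_\alpha\neg$ is an axiom (taking the horizontal or vertical instance of the $\sf K$ axiom, as appropriate). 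This contradicts the consistency of $\Phi$.

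Having shown $\Psi_0$ consistent, I would extend it by Lindenbaum's lemma to a maximally consistent $\Psi$. The language is countable, so one can enumerate formulas and add each formula or its negation while preserving consistency. Then $\varphi\in\Psi$, and $\Phi\prec^{\rm c}_\alpha\Psi$ holds by Lemma~\ref{BoxCanonicalModels}, since every $\psi$ with $\Box_\alpha\psi\in\Phi$ lies in $\Psi$.

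For the strengthened version, assume $\mathsf L_\alpha\supseteq\mathsf{GL}$. Then the $\alpha$-instance of the Löb axiom, $\Diamond_\alpha p\to\Diamond_\alpha(p\wedge\neg\Diamond_\alpha p)$, belongs to $[\mathsf L_h,\mathsf L_v]^{\sf e}$. By substitution, so does $\Diamond_\alpha\varphi\to\Diamond_\alpha(\varphi\wedge\neg\Diamond_\alpha\varphi)$. Since $\Phi$ is maximally consistent and contains $\Diamond_\alpha\varphi$, closure under modus ponens gives $\Diamond_\alpha(\varphi\wedge\neg\Diamond_\alpha\varphi)\in\Phi$. Applying the first part to the formula $\varphi\wedge\neg\Diamond_\alpha\varphi$ yields the required $\Psi$.

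I expect no real obstacle. The only point needing care is the bimodal bookkeeping: necessitation and the $\sf K$ axiom must be available for the modality $\alpha$ in the combined logic. This holds because the combined logic contains all $\alpha$-instances of theorems of $\mathsf L_\alpha$ and is closed under substitution and both necessitation rules.
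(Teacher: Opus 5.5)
Your proposal is correct and follows the paper's proof. The paper cites the standard existence lemma (the Lindenbaum argument you write out in full). It then obtains the strengthened version exactly as you do: the Löb axiom gives $\Diamond_\alpha(\varphi\wedge\neg\Diamond_\alpha\varphi)\in\Phi$, and the first part is applied to $\varphi\wedge\neg\Diamond_\alpha\varphi$.
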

\begin{proof}
The standard witness lemma is very well known (see, e.g.,  \cite[Lemma 4.20]{BRV01}). The modified version for $\sf GL$ is also well known, but we review the proof: suppose that $\mathsf L_\alpha$ extends $\sf GL$ and $\Diamond_\alpha \varphi \in \Phi$. Since $\Phi$ is maximal consistent, it is closed under derivability; but, by the Löb axiom (in the form we have stated it), from $\Diamond_\alpha\varphi$, we can infer $\Diamond_\alpha (\varphi \wedge\neg\Diamond_\alpha \varphi)$, hence the latter belongs to $\Phi$. Then, apply the standard witness lemma to find $\Psi\rh \Phi$ with $\varphi \wedge\neg\Diamond_\alpha \varphi \in \Psi $.
\end{proof}

\begin{proposition} \label{Canonicalmodel$[K4,K4]^e$}
Let $\mathsf L_h$, $\mathsf L_v$ be unimodal logics and let $\mathcal M^{\rm c}$ be the canonical model for $[ \mathsf L_h, \mathsf L_v]^{\sf e}$. Then:
\begin{enumerate}
    \item\label{itForConf} $\mathcal M^{\rm c}$ is forward-confluent.
    \item\label{itTrans} For $\alpha\in\{h,v\}$, if $\mathsf L_\alpha$ extends $\sf K4$, then $\prec_\alpha$ is transitive.
    \item\label{TruthLemma} The truth lemma holds for $\mathcal M^{\rm c}$, i.e. for any $\Phi \in W^{\rm c}$ and $\varphi \in \langfull$, we have $\varphi \in \Phi$ iff $\mathcal M^{\rm c}, \Phi \Vdash \varphi$.
\end{enumerate}
\end{proposition}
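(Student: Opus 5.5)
The plan is to treat the three items separately, handling the truth lemma and transitivity in the standard way and concentrating on forward-confluence. Throughout we use Lemma~\ref{BoxCanonicalModels} and the witness property (Lemma~\ref{WitnessCanonicalModel}). We also use the fact that maximal consistent sets are closed under derivability in $[\mathsf L_h,\mathsf L_v]^{\sf e}$.

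\emph{Item~\ref{TruthLemma}.} This is proved by induction on $\varphi$. The propositional and Boolean cases are immediate from maximal consistency. The case $\Diamond_\alpha\varphi$ has two directions. If $\Diamond_\alpha\varphi\in\Phi$, the witness property gives $\Psi\succ_\alpha^{\rm c}\Phi$ with $\varphi\in\Psi$. Conversely, if $\Phi\prec^{\rm c}_\alpha\Psi\ni\varphi$, then $\Diamond_\alpha\varphi\in\Phi$ by the definition of $\prec^{\rm c}_\alpha$.

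\emph{Item~\ref{itTrans}.} Suppose $\Phi\prec_\alpha^{\rm c}\Psi\prec_\alpha^{\rm c}\Theta$ and $\varphi\in\Theta$. Then $\Diamond_\alpha\varphi\in\Psi$, so $\Diamond_\alpha\Diamond_\alpha\varphi\in\Phi$. The horizontal or vertical instance of $\Diamond\Diamond p\to\Diamond p$ then gives $\Diamond_\alpha\varphi\in\Phi$.

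\emph{Item~\ref{itForConf}.} This is the main work. We prove (LC) and (CR) directly, each time building the required maximal consistent set by a Lindenbaum argument from a set shown to be consistent.

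For (LC), suppose $\Phi\prec_v\Phi'\prec_h\Psi'$. We look for $\Psi$ with $\Phi\prec_h\Psi\prec_v\Psi'$. Take
\[
\Gamma=\{\chi\mid \Box_h\chi\in\Phi\}\cup\{\Diamond_v\psi\mid\psi\in\Psi'\}.
\]
Any maximal consistent extension of $\Gamma$ works, by Lemma~\ref{BoxCanonicalModels} and the definition of $\prec^{\rm c}_v$. Suppose $\Gamma$ were inconsistent. Since $\Psi'$ is closed under conjunction and $\Diamond_v$ is monotone, there would be a single $\psi\in\Psi'$ and a single $\Box_h\chi\in\Phi$ with $\vdash\chi\to\neg\Diamond_v\psi$. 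Now $\psi\in\Psi'$ gives $\Diamond_h\psi\in\Phi'$, and hence $\Diamond_v\Diamond_h\psi\in\Phi$. Left commutativity yields $\Diamond_h\Diamond_v\psi\in\Phi$. Together with $\Box_h\chi\in\Phi$ this gives $\Diamond_h(\chi\wedge\Diamond_v\psi)\in\Phi$, which is inconsistent.

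For (CR), suppose $\Phi'\succ_v\Phi\prec_h\Psi$. We look for $\Psi'$ with $\Phi'\prec_h\Psi'\succ_v\Psi$. Take
\[
\Delta=\{\chi\mid\Box_h\chi\in\Phi'\}\cup\{\chi\mid\Box_v\chi\in\Psi\}.
\]
If $\Delta$ were inconsistent, there would be $\Box_h\chi_1\in\Phi'$ and $\Box_v\chi_2\in\Psi$ with $\vdash\chi_2\to\neg\chi_1$. From $\Box_v\chi_2\in\Psi$ we get $\Diamond_h\Box_v\chi_2\in\Phi$. By Church--Rosser (equivalently $\Diamond_h\Box_v p\to\Box_v\Diamond_h p$, per the remark after Proposition~\ref{propLC}) we obtain $\Box_v\Diamond_h\chi_2\in\Phi$, so $\Diamond_h\chi_2\in\Phi'$. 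Monotonicity then gives $\Diamond_h\neg\chi_1\in\Phi'$, contradicting $\Box_h\chi_1\in\Phi'$.

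The main obstacle is this consistency step for (CR). It relies on the equivalence of the stated Church--Rosser axiom $\Diamond_v\Box_h p\to\Box_h\Diamond_v p$ with its mirror form $\Diamond_h\Box_v p\to\Box_v\Diamond_h p$. That equivalence is obtained by contraposition: substitute $\neg p$ and dualise, which turns one form into the other. Once this is settled, both confluence proofs are routine.
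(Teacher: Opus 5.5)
Your proposal is correct and follows the paper's proof: the same Lindenbaum sets $\Gamma$ and $\Delta$, the same consistency arguments via left commutativity and Church--Rosser, and the same routine treatment of transitivity and the truth lemma. The only deviation is in (CR). There you apply the mirrored form $\Diamond_h\Box_v p\to\Box_v\Diamond_h p$ to the $\Box_v$-formula from $\Psi$, whereas the paper applies the axiom as stated, $\Diamond_v\Box_h p\to\Box_h\Diamond_v p$, to the $\Box_h$-formula from $\Phi'$ (pushing it back along $\prec_v$ and forward along $\prec_h$ into $\Psi$), so your equivalence argument, though correct, is not needed.
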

\begin{proof}
\begin{enumerate}[wide, labelwidth=!, labelindent=0pt, itemsep=5pt]
\item[\ref{itForConf}.] 
    \begin{description}[wide, labelwidth=!, labelindent=5pt, itemsep=0pt]
    \item[(LC)] Assume that $\Phi \prec_v^{\rm c} \Upsilon \prec_h^{\rm c} \Psi$. Let $\Phi^{\Box_h} := \{\varphi \mid \Box_h \varphi \in \Phi\}$ and $\Psi^{\Diamond_v} := \{\Diamond_v \psi \mid \psi \in \Psi\}$. It suffices to show that $\Theta^{-} := \Phi^{\Box_h} \cup \Psi^{\Diamond_v}$ is consistent in $[{\sf L}_h, {\sf L}_v]^{\sf e}$. We can then use Lindenbaum's lemma and Lemma \ref{BoxCanonicalModels} to conclude that there is a maximal consistent extension $\Theta \supseteq \Theta^{-}$ which satisfies $\Phi \prec_h^{\rm c} \Theta \prec_v^{\rm c} \Psi$. 

    Suppose, towards a contradiction, that $\Theta^-$ is inconsistent. Then, there exist finite subsets $\Phi_0\subseteq\Phi$ and $\Psi_0 \subseteq \Psi$ such that $\vdash \bigwedge \Psi_0^{\Diamond_v} \to \neg \bigwedge \Phi_0^{\Box_h}$. By Remark~\ref{RemarkK4xK4}\eqref{2RemarkK4xK4}, we have $\vdash \Diamond_v \bigwedge \Psi_0 \to \neg \bigwedge \Phi_0^{\Box_h}$. Then, applying Remark~\ref{RemarkK4xK4}\eqref{3RemarkK4xK4} we deduce
    \begin{equation}
    \label{EqConfl1}
    \vdash \Diamond_h \Diamond_v \bigwedge \Psi_0 \to \neg \Box_h \bigwedge \Phi_0^{\Box_h}.
    \end{equation}

    On the other hand, since $\Upsilon \prec_h^{\rm c} \Psi$, we clearly have that $\Diamond_h \bigwedge \Psi_0 \in \Upsilon$. Moreover, given that $\Phi \prec_v^{\rm c} \Upsilon$, we get $\Diamond_v \Diamond_h \bigwedge \Psi_0 \in \Phi$. Given that $\Phi$ is a $[{\sf L}_h,{\sf L}_v]^{\sf e}$-maximal consistent set, by the left-commutativity axiom we have that $\Diamond_h \Diamond_v \bigwedge \Psi_0 \in \Phi$. Therefore, from (\ref{EqConfl1}) we obtain $\neg \Box_h \bigwedge \Phi_0^{\Box_h} \in \Phi$, that is, $\Box_h \bigwedge \Phi_0^{\Box_h} \notin \Phi$. This concludes the proof by contradiction using Remark~\ref{RemarkK4xK4}\eqref{1RemarkK4xK4}.

    \item[(CR)] Assume that $\Upsilon \succ_v^{\rm c} \Phi \prec_h^{\rm c} \Theta$. Let $\Upsilon^{\Box_h} := \{\varphi \mid \Box_h \varphi \in \Upsilon\}$ and $\Theta^{\Box_v} := \{\psi \mid \Box_v \psi \in \Theta\}$. It suffices to see that $\Psi^{-} := \Upsilon^{\Box_h} \cup \Theta^{\Box_v}$ is consistent in $[{\sf L}_h, {\sf L}_v]^{\sf e}$.
    By Lindenbaum's lemma we can find a maximal consistent extension $\Psi \supseteq \Psi^{-}$ which satisfies $\Upsilon \prec_h^{\rm c} \Psi \succ_v^{\rm c} \Theta$, guaranteed by Lemma \ref{BoxCanonicalModels}.
    
    Suppose, towards contradiction, that $\Psi^{-}$ is inconsistent. Then, we have that $\vdash \bigwedge \Upsilon_0^{\Box_h} \to \neg \bigwedge \Theta_0^{\Box_v}$ for some finite subsets $\Upsilon_0\subseteq\Upsilon$ and $\Theta_0\subseteq\Theta$, and by Remark~\ref{RemarkK4xK4}\eqref{3RemarkK4xK4} we can deduce that
    \begin{equation}
    \label{EqConf2}
        \vdash\Diamond_v \bigwedge \Upsilon_0^{\Box_h} \to \neg \Box_v \bigwedge \Theta_0^{\Box_v}. 
    \end{equation}  
    On the other hand, by Remark~\ref{RemarkK4xK4}\eqref{1RemarkK4xK4}, we clearly have $\Box_h \bigwedge \Upsilon_0^{\Box_h} \in \Upsilon$. Since  $\Phi \prec_v^{\rm c} \Upsilon$, we deduce that $\Diamond_v \Box_h \bigwedge \Upsilon_0^{\Box_h} \in \Phi$. Hence, by the Church-Rosser axiom we show that $\Box_h \Diamond_v \bigwedge \Upsilon_0^{\Box_h} \in \Phi$. Now, from $\Phi \prec_h^{\rm c} \Theta$ using Lemma \ref{BoxCanonicalModels} we deduce $\Diamond_v \bigwedge \Upsilon_0^{\Box_h} \in \Theta$. Therefore, we get $\neg \Box_v \bigwedge \Theta_0^{\Box_v} \in \Theta$ by (\ref{EqConf2}), that is, $\Box_v \bigwedge \Theta_0^{\Box_v} \notin \Theta$. This concludes the proof by contradiction using Remark~\ref{RemarkK4xK4}\eqref{1RemarkK4xK4}.
    \end{description}

\item[\ref{itTrans}.] Suppose that $\Phi \prec_\alpha^{\rm c} \Psi \prec_\alpha^{\rm c} \Upsilon$ for $\alpha \in \{h, v\}$, and let $\varphi \in \Upsilon$. Then $\Diamond_\alpha \varphi \in \Psi$, and hence $\Diamond_\alpha \Diamond_\alpha \varphi \in \Phi$. 
By transitivity axiom and this fact that $\Phi$ is closed under derivability, we have $\Diamond_\alpha \varphi \in \Phi$. Thus, $\Phi \prec_\alpha^{\rm c} \Upsilon$. 

\item[\ref{TruthLemma}.] This is standard (see e.g.~\cite{Chagrov1997}) and can be proven by structural induction on $\varphi$, using the witness property~(Lemma~\ref{WitnessCanonicalModel}) for modal cases.\qedhere
\end{enumerate}
\end{proof}

The following result can be proven using Lemma~\ref{ExtoFCModel} and the completeness of $ [\mathsf{K4},\mathsf{K4}]^{\sf e}$ for its class of expanding domain models~\cite{mdml}, but it is illustrative to note that we can already derive it from our preliminary work.

\begin{thm} \label{Completeness$[K4,K4]^e$}
$ [\mathsf{K4},\mathsf{K4}]^{\sf e}$ is sound and complete for $ \sf{K4}\rtimes \sf{K4}$.
\end{thm}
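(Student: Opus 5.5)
Soundness and completeness are handled separately, and both follow directly from results already established.

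For soundness, we apply Proposition~\ref{SoundFor-Conflu} with ${\sf L}_h={\sf L}_v={\sf K4}$. That proposition shows that every theorem of $[{\sf K4},{\sf K4}]^{\sf e}$ is valid on every frame in ${\sf K4}\rtimes{\sf K4}$. The reason is that the horizontal and vertical instances of ${\sf K4}$ hold because each relation is transitive, and the two expanding product axioms hold by Proposition~\ref{propLC}, since (LC) and (CR) are built into the definition of a forward-confluent frame. The rules of modus ponens, substitution and necessitation preserve frame validity as usual.

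For completeness, we argue by contraposition. Suppose $\nvdash_{[{\sf K4},{\sf K4}]^{\sf e}}\varphi$. Then $\{\neg\varphi\}$ is consistent, so Lindenbaum's lemma extends it to a maximal consistent set $\Phi\in W^{\rm c}$ of the canonical model $\mathcal M^{\rm c}$ for $[{\sf K4},{\sf K4}]^{\sf e}$. We then invoke Proposition~\ref{Canonicalmodel$[K4,K4]^e$}:
\begin{itemize}
\item item~\ref{itForConf} shows that $(W^{\rm c},\hr^{\rm c},\vr^{\rm c})$ satisfies (LC) and (CR);
\item item~\ref{itTrans}, applied to both $\alpha=h$ and $\alpha=v$, shows that both canonical relations are transitive, because both components extend ${\sf K4}$.
\end{itemize}
Hence $(W^{\rm c},\hr^{\rm c})$ and $(W^{\rm c},\vr^{\rm c})$ are ${\sf K4}$-frames, and the underlying frame of $\mathcal M^{\rm c}$ lies in ${\sf K4}\rtimes{\sf K4}$. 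Finally, the truth lemma (item~\ref{TruthLemma}) gives $\mathcal M^{\rm c},\Phi\Vdash\neg\varphi$, so $\varphi$ is not valid on this frame.

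There is essentially no obstacle here, since the substantive work (confluence of the canonical relations) was already done in Proposition~\ref{Canonicalmodel$[K4,K4]^e$}. The only point to check is that the class of ${\sf K4}$-frames is exactly the class of transitive frames, so transitivity alone places the canonical frame in the required class. This is what fails for ${\sf GL}$, where the canonical frame is not conversely well-founded, and it is why the later sections need a different method for that case.
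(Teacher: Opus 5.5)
Your proposal is correct and follows the paper's proof: soundness from Proposition~\ref{SoundFor-Conflu}, and completeness via a maximal consistent set in the canonical model together with the truth lemma. You are actually a bit more explicit than the paper. You use the forward-confluence and transitivity items of the canonical-model proposition to check that the canonical frame lies in ${\sf K4}\rtimes{\sf K4}$, a step the paper leaves implicit.
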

\begin{proof}
 Soundness is given by Proposition \ref{SoundFor-Conflu}. To show completeness, suppose that $\not\vdash_{[{\sf K4},{\sf K4}]^{\sf e}} \varphi$ for some $\varphi \in \langfull$. Then, there is some maximal consistent set $\Phi$ in the canonical model such that $\varphi \notin \Phi$. By the truth lemma (Theorem \ref{Canonicalmodel$[K4,K4]^e$}\eqref{TruthLemma}), we deduce that $\mathcal{M}^{\rm c}, \Phi \not\Vdash \varphi$, showing that $\varphi$ is not valid for the class $\sf{K4}\rtimes\sf{K4}$.
\end{proof}

However, obtaining completeness for logics where one of the components is Noetherian will require a deeper analysis.

\section{Moments}\label{sec:Moments}

Since the canonical model is not an expanding domain model (or even an embedding domain model), we need to `extract' a model of this desired form from it. The decidability proof of Gabelaia et al.~\cite{pml} uses the well-established techniques of building {\em quasimodels}, possibly partial models where points are labelled by formulas intended to be true at that point. Following the terminology of more recent work (e.g.~\cite{FDMontacute}), quasimodels are made up of {\em moments} viewed as instances of vertical frames, suggestive of an intuition where the horizontal relation represents time. As our goal is to show completeness for expanding commutators with ${\sf GL}$ as the vertical component, our moments are specifically tailored for this logic. Each point in a moment is labelled by a type, as defined below.

\begin{defn}\label{defSType}
Let $\Sigma \subseteq \langfull$ be closed under subformulas and single negations; we write this as $\Sigma\Subset\langfull$. A {\em $\Sigma$-type} is any $\Phi \subseteq \Sigma$ satisfying the following properties:
\begin{enumerate}
    \item $\neg\varphi\in \Phi$ iff $\varphi\notin \Phi$, for every $\neg\varphi\in \Sigma$, and
    \item $\varphi \wedge \psi \in \Phi$ iff $\varphi, \psi \in \Phi$, for every $\varphi \wedge \psi \in \Sigma$.
\end{enumerate}
The set of all finite $\Sigma$-types is denoted by $\mathrm{T}_\Sigma$.
\end{defn}

We define for $\alpha \in  \{h, v\}$,
\begin{itemize}
    \item $\Phi \prec_\alpha^\Sigma \Psi$ iff whenever either $\varphi$ or $\Diamond_\alpha \varphi \in \Psi $ then $\Diamond_\alpha\varphi\in \Phi$, for every $\Diamond_\alpha\varphi\in\Sigma$. 
\end{itemize}

We adopt the convention that, for $\alpha \in \{h,v\}$, a map is a {\em homomorphism} with respect to $\prec_\alpha$ whenever it preserves the corresponding $\prec_\alpha$-relation of the structures under consideration.

\begin{defn}\label{DefQuasiMom}
Let $\Sigma\Subset\langfull$ (as in Definition~\ref{defSType}). A {\em $\Sigma$-quasimoment} is a structure $\mathfrak m=(|\mathfrak m|, \vr^\mathfrak m, \ell^\mathfrak m )$, where $(|\mathfrak m|,\vr^\mathfrak m)$ is a finite, tree-like strict partial order with root $r^\mathfrak m$ and $\ell^\mathfrak m\colon |\mathfrak m|\to T_\Sigma$ is a homomorphism with respect to $\vr$.

A {\em $\Sigma$-moment} is a $\Sigma$-quasimoment $\mathfrak m=(|\mathfrak m|, \vr^\mathfrak m, \ell^\mathfrak m )$ such that for every $w\in |\mathfrak m|$ and every $\vd\varphi\in\Sigma$, if  $\vd\varphi\in \ell^\mathfrak m(w)$, then there exists $v \succ_v^\mathfrak m w$ such that $\varphi\in \ell^\mathfrak m(v)$.

We denote the set of $\Sigma$-moments by $\mathbb M_\Sigma$.
\end{defn}

Moments are similar to vertical frames but with additional syntactic data. Accordingly, given two points that are horizontally related and a moment for the first one, we identify the conditions that a potential moment for the second point must satisfy.

\begin{defn}\label{ts}
Let $\Sigma\Subset\langfull$, and $\mathfrak{m}$ and $\mathfrak{n}$ be $\Sigma$-quasimoments. 
\begin{enumerate}
    \item Say that $\mathfrak n$ is a {\em successor} of $\mathfrak m$, denoted $\mathfrak m \hr^\Sigma \mathfrak n$, if there exists an embedding $e \colon |\mathfrak m| \to |\mathfrak n|$ such that for every $w \in |\mathfrak m|$, we have $\ell^\mathfrak m(w) \hr^{\rm \Sigma} \ell^\mathfrak n(e(w))$.
   \item Moreover, we write $\mathfrak m\sqsubseteq^\Sigma\mathfrak n$ if $\mathfrak m$ embeds into $\mathfrak n$ preserving both the root and the labeling, in which case we say that $ \mathfrak m$ is a {\em submoment} of $\mathfrak n$.
\end{enumerate}
\end{defn}

The key property of moments used in the decidability proof of Gabelaia et al.~\cite{pml} is that they are well-quasi-ordered, itself a consequence of Kruskal's theorem. This property will also be crucial for our completeness proof of $[\mathsf{GL},\mathsf{GL}]^{\sf e}$.

\begin{thm}[Kruskal \cite{Kruskal1960}]\label{thmKruskal}
For any finite set $\Lambda$, the class of finite $\Lambda$-labelled trees is well-quasi-ordered under embeddability, i.e., for every infinite sequence $(\mathfrak{T}_n)_{n<\omega}$, there exist $i<j<\omega$ such that $\mathfrak{T}_i$ embeds into $\mathfrak{T}_j$ via an embedding which preserves labels.
\end{thm}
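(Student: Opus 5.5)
We prove Kruskal's theorem via Nash-Williams' minimal bad sequence argument. Here ``embedding'' of labelled trees is taken in the sense of Definition~\ref{ts}: an injective map preserving the strict tree order and the labels. This map need not preserve immediate successors or infima. The finite set $\Lambda$ is well-quasi-ordered by equality, and the plan only uses that it is a wqo.

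Call a sequence $(\mathfrak T_n)_{n<\omega}$ \emph{bad} if no $i<j$ has $\mathfrak T_i$ embedding into $\mathfrak T_j$. Suppose a bad sequence exists. Build a \emph{minimal} bad sequence inductively: choose $\mathfrak T_0$ of least size among first terms of bad sequences. Given $\mathfrak T_0,\dots,\mathfrak T_{n}$, choose $\mathfrak T_{n+1}$ of least size among all trees extending $\mathfrak T_0,\dots,\mathfrak T_n$ to some bad sequence. By dependent choice this yields a bad sequence, and each term is minimal in this sense.

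Write each $\mathfrak T_n$ as a root labelled $\lambda_n$ together with the finite multiset $F_n$ of immediate subtrees. Let $S=\bigcup_n F_n$. The first key step is to show that $S$ is well-quasi-ordered under embedding. Suppose instead that $(\mathfrak S_k)_k$ is bad in $S$, with $\mathfrak S_k\in F_{n(k)}$. Since each $F_n$ is finite, we may pass to a subsequence and assume $n(k)$ is strictly increasing. Now consider the sequence
\[
\mathfrak T_0,\dots,\mathfrak T_{n(0)-1},\mathfrak S_0,\mathfrak S_1,\dots
\]
It is bad. Pairs inside the $\mathfrak T$-prefix are fine by badness of the original sequence, and pairs among the $\mathfrak S_k$ are fine by assumption. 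For a mixed pair, an embedding $\mathfrak T_i\to\mathfrak S_k$ with $i<n(0)\le n(k)$ would compose with the inclusion $\mathfrak S_k\hookrightarrow \mathfrak T_{n(k)}$, which is an order- and label-preserving injection. This would give an embedding $\mathfrak T_i\to\mathfrak T_{n(k)}$, a contradiction. However, $\mathfrak S_0$ is strictly smaller than $\mathfrak T_{n(0)}$, which contradicts the minimal choice of $\mathfrak T_{n(0)}$.

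The second step is Higman's lemma: if $S$ is wqo, then finite multisets (or finite sequences) over $S$ are wqo under injective domination. Here $F\le G$ means there is an injection $\iota\colon F\to G$ with each $\mathfrak S$ embedding into $\iota(\mathfrak S)$. We prove this with the same minimal bad sequence argument on finite sequences: remove the first element, then use Ramsey's theorem, or equivalently the fact that every infinite sequence in a wqo has an infinite nondecreasing subsequence. Applying this together with the wqo $\Lambda$ (finitely many labels, so pigeonhole), we get from any infinite sequence some $i<j$ with $\lambda_i=\lambda_j$ and $F_i\le F_j$. Map the root of $\mathfrak T_i$ to the root of $\mathfrak T_j$, and map each immediate subtree into its assigned subtree of $\mathfrak T_j$ via its embedding. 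The assigned subtrees are distinct, hence disjoint, so the combined map is injective. It is also order-preserving, since the root lies below everything. This embeds $\mathfrak T_i$ into $\mathfrak T_j$ and contradicts badness.

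The main obstacle is the minimality step: checking that the spliced sequence is genuinely bad, and that the construction of the minimal bad sequence is well defined. Higman's lemma is a standard separate ingredient and would be cited or proved by the same technique. For the paper's purposes it would suffice to cite~\cite{Kruskal1960}.
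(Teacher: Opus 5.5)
Your proof is correct. It is the Nash-Williams minimal-bad-sequence argument, which reduces the tree case to Higman's lemma for finite multisets over the set of immediate subtrees, and you check both the splicing step and the gluing step properly.

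The paper gives no proof of this statement; it simply cites Kruskal~\cite{Kruskal1960}, whose original argument is considerably longer. Citing is appropriate for a classical result. Your route instead gives a short proof that is self-contained modulo Higman's lemma, which you sketch in the standard way.

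Two small points are worth adding.

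First, the paper's embeddings (Definition~\ref{defEmbModel}, used in Definition~\ref{ts}) must also \emph{reflect} the order: $u\prec w$ iff $e(u)\prec e(w)$. You only require preservation, and wqo under that weaker notion does not literally give the stated result. You should fix the stronger notion from the start. The only extra check is then in the gluing step: nodes in distinct immediate subtrees of $\mathfrak T_i$ are sent into distinct, hence incomparable, immediate subtrees of $\mathfrak T_j$, and nothing is sent strictly below the root. Composing with the inclusion of a cone causes no problem. In fact the same argument goes through verbatim with infimum-preserving embeddings, which gives Kruskal's strong form.

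Second, your last step yields more than the statement asks for. Once all finite $\Lambda$-labelled trees are known to be wqo, you can apply pigeonhole to the root labels and Higman's lemma to the multisets of immediate subtrees. For any sequence, this gives indices $i<j$ with a \emph{root-preserving} embedding $\mathfrak T_i\to\mathfrak T_j$. This is exactly what the relation $\sqsubseteq^\Sigma$ of Corollary~\ref{thmWQO} requires, and it is what Lemma~\ref{lemExistsQuasi} actually uses via Theorem~\ref{thmYF}. The paper's ``immediately apply'' leaves this step implicit.
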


As moments can be seen as $T_\Sigma$-labelled trees, we can immediately apply Kruskal's theorem to them to obtain the following.

\begin{corollary}\label{thmWQO}
Let $\Sigma\Subset\langfull$.
The set $\mathbb M_\Sigma$ is well-quasi-ordered by $\sqsubseteq^\Sigma$.
\end{corollary}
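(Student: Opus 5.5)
We reduce the statement directly to Theorem~\ref{thmKruskal}. Since $\Sigma$ is closed under subformulas and single negations, a $\Sigma$-moment is labelled by types drawn from $\mathrm T_\Sigma$. We take $\Lambda := \mathrm T_\Sigma$ as the label set, which is finite provided $\Sigma$ is finite. This is the intended reading of the notation $\Sigma \Subset \langfull$ here, and we will state it explicitly. Indeed, $\mathrm T_\Sigma \subseteq \mathcal P(\Sigma)$, so $|\Lambda| \le 2^{|\Sigma|}$.

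Each $\mathfrak m \in \mathbb M_\Sigma$ is then a finite $\Lambda$-labelled tree. By Definition~\ref{DefQuasiMom}, $(|\mathfrak m|, \vr^{\mathfrak m})$ is a finite tree-like strict partial order with root $r^{\mathfrak m}$. The tree structure in Kruskal's sense is obtained by taking the immediate-successor (covering) relation of $\vr^{\mathfrak m}$, whose transitive closure recovers $\vr^{\mathfrak m}$. The labelling is $\ell^{\mathfrak m}$.

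Given an infinite sequence $(\mathfrak m_n)_{n<\omega}$ in $\mathbb M_\Sigma$, Theorem~\ref{thmKruskal} yields $i<j$ and a label-preserving embedding $e\colon |\mathfrak m_i| \to |\mathfrak m_j|$. The remaining task is to check that this embedding witnesses $\mathfrak m_i \sqsubseteq^\Sigma \mathfrak m_j$. That is, $e$ must be an embedding of strict orders, meaning injective with $u \vr w$ iff $e(u) \vr e(w)$. It must also preserve labels and send $r^{\mathfrak m_i}$ to $r^{\mathfrak m_j}$.

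This matching of notions is the only genuine obstacle. In Kruskal's theorem, embeddings are injective, preserve the tree order in both directions, and in the standard formulation preserve infima. Root preservation is not automatic. To force it, we add a fresh marker to the labels: the label set becomes $\Lambda' := \Lambda \times \{0,1\}$, where only the root receives second coordinate $1$. A label-preserving embedding must then send the root to the root, since it is the unique node bearing that label.

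With this adjustment the conclusion follows. Every infinite sequence contains $i<j$ with $\mathfrak m_i \sqsubseteq^\Sigma \mathfrak m_j$. Moreover, $\sqsubseteq^\Sigma$ is reflexive via the identity and transitive via composition of embeddings. Hence it is a well-quasi-order on $\mathbb M_\Sigma$.
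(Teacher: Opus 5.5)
Your proof is correct and follows the paper's route: the paper simply observes that $\Sigma$-moments are finite $\mathrm T_\Sigma$-labelled trees and applies Kruskal's theorem. Your explicit finiteness assumption on $\Sigma$ and your root-marker labelling by $\Lambda\times\{0,1\}$ make rigorous two points the paper leaves implicit: that the label set is finite, and that the embedding sends root to root, as required by $\sqsubseteq^\Sigma$ and used later in Lemma~\ref{lemExistsQuasi}.
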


\section{Simulations}\label{sec:Simulations}

The extraction of quasimodels from the canonical model is performed via the use of simulations, which relate moments and maximal consistent sets. In this section, $\mathcal M^{\rm c}$ denotes the canonical model of a logic $[\mathsf {L}_h,\mathsf{GL}]^{\sf e}$, where, in principle, $\mathsf {L}_h$ is a normal extension of $\sf K4$. In practice, it may be assumed that $\mathsf {L}_h \in \{\mathsf{K4},\mathsf{GL}\}$.

\begin{defn}
Let $\Sigma\Subset\langfull$ and let $\mathfrak m$ be a $\Sigma$-quasimoment. A map $\sigma \colon |\mathfrak m| \to W^{\mathrm c}$ is called a {\em simulation} if it is a homomorphism with respect to $\vr $ and $\ell^{\mathfrak m}(w) = \sigma(w)\cap\Sigma$ for all $w \in |\mathfrak m|$.

For $\Phi \in W^{\mathrm c}$, we write $\mathfrak m \rightharpoonup  \Phi$ and say that $\mathfrak{m}$ {\em simulates} $\Phi$ if there exists a simulation $\sigma \colon |\mathfrak m| \to W^{\mathrm c}$ such that $\sigma(r^{\mathfrak m}) = \Phi$.
\end{defn}

Recall that, unlike quasimoments, moments are required to witness every formula of the form $\Diamond_v\varphi$ labelling a node by a suitable $\prec_v$-successor labelled with $\varphi$. For technical reasons, it is convenient to record instances where a diamond formula still requires such a witness, called `defects'. We also treat occurrences of $\Diamond_h$-formulas as horizontal defects, which are realized by extending the quasimoment with a successor.

\begin{defn}\label{defDefects}
Let $\Sigma\Subset\langfull$. A {\em potential defect} $\delta$ of a $\Sigma$-quasimoment $\mathfrak m$ is a pair $\delta = (w, \Diamond_\alpha \varphi)$ such that $\Diamond_\alpha \varphi \in \ell^{\mathfrak m}(w)$, where $\alpha \in \{h,v\}$. 
\begin{itemize}
    \item A potential defect $\delta$ is called a {\em horizontal defect} if $\alpha = h$. We say that a $\Sigma$-quasimoment $\mathfrak n$ {\em realizes} the horizontal defect $(w, \Diamond_h \varphi)$ if $\mathfrak m \hr^\Sigma\mathfrak n$ via an embedding $e : |\mathfrak{m}| \to |\mathfrak{n}|$ with $\varphi \in \ell^{\mathfrak n}(e(w))$.
    \item A potential defect $\delta$ is called a {\em vertical defect} if $\alpha = v$ and there is no $u \succ_v^{\mathfrak m} w$ such that $\varphi \in \ell^{\mathfrak m}(u)$.
\end{itemize}
\end{defn}

It will often be convenient to construct moments by first building quasimoments and then iteratively removing the vertical defects.
We may use this strategy to show that every maximal consistent set in the canonical model is simulated by some moment.

\begin{lem}\label{itMomWitNow}
Let $\Sigma\Subset\langfull$ and let $\Phi \in W^{\rm c}$. Then there exists a  $\Sigma$-moment $\mathfrak m$ such that $\mathfrak m \rightharpoonup \Phi$.
\end{lem}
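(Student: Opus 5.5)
We build the moment top-down by adding witnesses, using the $\sf GL$ version of the witness property (Lemma~\ref{WitnessCanonicalModel}) to make sure the construction stops. We assume, as is implicit in the definitions (moments are finite trees labelled by elements of $\mathrm T_\Sigma$), that $\Sigma$ is finite. Throughout, $\vr^{\rm c}$ is transitive by Proposition~\ref{Canonicalmodel$[K4,K4]^e$}\eqref{itTrans}, since $\sf GL$ extends $\sf K4$.

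\emph{Preliminary observation.} If $\Theta\vr^{\rm c}\Psi$, then $\Theta\cap\Sigma\vr^\Sigma\Psi\cap\Sigma$. To see this, let $\vd\varphi\in\Sigma$. If $\varphi\in\Psi$, then $\vd\varphi\in\Theta$ by the definition of $\vr^{\rm c}$. If $\vd\varphi\in\Psi$, then $\vd\vd\varphi\in\Theta$, and the transitivity axiom gives $\vd\varphi\in\Theta$. Both sets are $\Sigma$-types because $\Theta$ and $\Psi$ are maximal consistent.

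\emph{Construction.} We build a finite tree $T$ together with a map $\sigma\colon T\to W^{\rm c}$.
\begin{enumerate}
\item Start with a single root $r$ and set $\sigma(r)=\Phi$.
\item Take any leaf $w$ not yet processed. For each $\vd\varphi\in\sigma(w)\cap\Sigma$, apply the $\sf GL$ clause of Lemma~\ref{WitnessCanonicalModel}. This yields $\Psi_\varphi$ with $\sigma(w)\vr^{\rm c}\Psi_\varphi$ and $\varphi\wedge\neg\vd\varphi\in\Psi_\varphi$.
\item Add a fresh child $u_\varphi$ of $w$ and set $\sigma(u_\varphi)=\Psi_\varphi$.
\item Let $\vr^{\mathfrak m}$ be the strict ancestor relation of the tree, and let $\ell^{\mathfrak m}(u)=\sigma(u)\cap\Sigma$.
\end{enumerate}
Each node therefore receives at most $|\Sigma|$ children, so the tree is finitely branching.

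\emph{Termination.} This is the main point. Let $D(u)=\{\vd\chi\in\Sigma:\vd\chi\in\sigma(u)\}$. We claim that if $u_\varphi$ is a child of $w$, then $D(u_\varphi)\subsetneq D(w)$.
\begin{itemize}
\item Inclusion follows from the preliminary observation, since $\sigma(w)\vr^{\rm c}\sigma(u_\varphi)$.
\item Strictness holds because $\vd\varphi\in D(w)$ but $\neg\vd\varphi\in\sigma(u_\varphi)$.
\end{itemize}
So every branch has length at most $|\Sigma|+1$. By König's lemma, $T$ is finite.

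\emph{$\mathfrak m$ is a $\Sigma$-moment simulating $\Phi$.}
\begin{itemize}
\item $\sigma$ is a homomorphism for $\vr$. If $u$ is a strict ancestor of $u'$, then the path from $u$ to $u'$ consists of $\vr^{\rm c}$-steps, so $\sigma(u)\vr^{\rm c}\sigma(u')$ by transitivity.
\item $\ell^{\mathfrak m}$ is a homomorphism into $\vr^\Sigma$. This follows from the same fact combined with the preliminary observation.
\item $\sigma(r)=\Phi$ and $\ell^{\mathfrak m}(u)=\sigma(u)\cap\Sigma$ by definition.
\item There are no vertical defects. Every $\vd\varphi\in\ell^{\mathfrak m}(w)$ is witnessed by the child $u_\varphi$, which satisfies $\varphi\in\ell^{\mathfrak m}(u_\varphi)$. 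Here we use that $\varphi\in\Sigma$ by subformula closure.
\end{itemize}
Hence $\mathfrak m\rightharpoonup\Phi$.

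The only step that is not routine is termination. It depends on choosing witnesses that contain $\neg\vd\varphi$, which is available precisely because the vertical logic is $\sf GL$. With an arbitrary witness, the ordinary (non-Noetherian) canonical model could force an infinite branch.
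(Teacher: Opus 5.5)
Your proof is correct and follows essentially the same route as the paper. The paper also grows the tree by attaching to each leaf fresh witnesses for its $\Diamond_v$-formulas, chosen via the $\sf GL$ clause of Lemma~\ref{WitnessCanonicalModel} to contain $\varphi\wedge\neg\Diamond_v\varphi$, and it proves termination by showing that the set of pending $\Diamond_v$-formulas strictly shrinks from a node to each new child; your explicit tracking of $\sigma$, and your use of transitivity of $\prec_v^{\rm c}$ for non-immediate ancestors, is if anything slightly more careful than the paper's write-up.
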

\begin{proof}
To prove this, we construct a sequence $(\mathfrak{m}_i)_{i \in \mathbb{N}}$ of $\Sigma$-quasimoments and show that it eventually stabilizes at a $\Sigma$-moment $\mathfrak{m}$ such that $\mathfrak{m} \rightharpoonup \Phi$.

Consider the sequence $(\mathfrak{m}_i)_{i \in \mathbb{N}}$ defined recursively as:
\begin{itemize}
    \item $\mathfrak{m}_0 := (\{w\}, \prec_v^0, \ell^0)$, where $\prec_v^0 := \emptyset$ and $\ell^0(w) := \Phi \cap \Sigma$.
    \item Given $\mathfrak{m}_i = (|\mathfrak{m}_i|, \prec_v^i, \ell^i)$, the $\Sigma$-quasimoment $\mathfrak{m}_{i+1}$ is obtained by extending $\mathfrak{m}_i$ as follows: for every vertical defect $(u, \Diamond_v \varphi)$ of $\mathfrak{m}_i$ with $\ell^i(u) = \Psi \cap \Sigma$, introduce a fresh node $u'$ such that $u \prec_v^{i+1} u'$, and define $\ell^{i+1}(u') := \Psi' \cap \Sigma$ 
    where $\Psi' \in W^{\mathrm c}$ is chosen so that $\Psi \prec_v^{\mathrm c} \Psi'$ and $\varphi \wedge \neg \Diamond_v \varphi \in \Psi'$, as given by the witness property (Lemma~\ref{WitnessCanonicalModel}).
    
    In this case, we say that $u$ is a \emph{defect node of $\mathfrak{m}_i$}, and $u'$ \emph{realizes the defect} $(u, \Diamond_v \varphi)$ in $\mathfrak{m}_{i+1}$, or simply $u'$ \emph{realizes a defect at} $u$. 
\end{itemize}

We first see that the sequence is well-defined.

\begin{claim}
\label{Claim1}
$\mathfrak{m}_i$ is a $\Sigma$-quasimoment for any $i \in \mathbb{N}$.
\end{claim}

\begin{proofof}{Claim \ref{Claim1}}
We proceed by induction on $i \in \mathbb{N}$. The base case is immediate. For the inductive step, it suffices to verify that the homomorphism condition is preserved at the fresh nodes added in $\mathfrak{m}_{i+1}$, since it holds elsewhere by the inductive hypothesis.

Let $u' \in |\mathfrak{m}_{i+1}|$ be a fresh node realizing a defect at $(u, \Diamond_v \varphi)$ of $\mathfrak{m}_{i}$, with $\ell^{i+1}(u) = \Psi \cap \Sigma$ and $\ell^{i+1}(u') = \Psi' \cap \Sigma$, where $\Psi \prec_v^{\rm c} \Psi'$ and $\varphi \wedge \neg \Diamond_v \varphi \in \Psi'$. Let $\Diamond_v \psi \in \Sigma$. 
\begin{enumerate}
    \item If $\psi \in \Psi' \cap \Sigma$, then $\Diamond_v \psi \in \Psi$ by $\Psi \prec_v^{\rm c} \Psi'$. Hence, $\Diamond_v \psi \in \Psi \cap \Sigma$.
    \item If $\Diamond_v \psi \in \Psi' \cap \Sigma$, then $\Diamond_v \Diamond_v \psi \in \Psi$ by $\Psi \prec_v^{\rm c} \Psi'$. Since $\Psi$ is maximal consistent, transitivity yields $\Diamond_v \psi \in \Psi$. Thus, $\Diamond_v \psi \in \Psi \cap \Sigma$. \qedhere
\end{enumerate}
\end{proofof}

We next show that the sequence of $\Sigma$-quasimoments stabilizes, i.e. there exists $k \in \mathbb{N}$ such that $\mathfrak{m}_k = \mathfrak{m}_j$ for all $j \geq k$. The $\Sigma$-quasimoment $\mathfrak{m}_k$ is called the {\em limit} of $(\mathfrak{m}_i)_{i \in \mathbb{N}}$.

\begin{claim}
\label{Claim2} The sequence $(\mathfrak{m}_i)_{i \in \mathbb{N}}$ stabilizes.
\end{claim}

\begin{proofof}{Claim \ref{Claim2}}
Let $\mathrm{def}_i(u)$ denote the set of formulas $\Diamond_v \varphi$ such that $(u, \Diamond_v \varphi)$ is a vertical defect of $\mathfrak{m}_i$. It suffices to show that 
\begin{equation}
\label{eqCl2}
\mathrm{def}_{i+1}(u') \subsetneq \mathrm{def}_i(u)
\end{equation} 
for every $u' \in |\mathfrak{m}_{i+1}|$ realizing a defect node $u \in |\mathfrak{m}_i|$. Indeed, $\mathfrak{m}_i$ only extends properly to $\mathfrak{m}_{i+1}$ in order to realize existing defects. Since $\Sigma$ is finite, there are only finitely many defects in $\mathfrak{m}_0$. By \eqref{eqCl2}, whenever a fresh node is added to realize a defect node, its set of defects is strictly contained in that of the node it realizes. It follows that along any branch of the construction, the sets of defects form a strictly decreasing chain with respect to inclusion, so the construction must stabilize.

To show the inclusion in (\ref{eqCl2}), consider $\Diamond_v \psi \in \mathrm{def}_{i+1}(u')$ such that $u'$ is a fresh node realizing the defect $(u, \Diamond_v \varphi)$ in $\mathfrak{m}_i$. Let $\ell^{i+1}(u) = \Psi \cap \Sigma$ and $\ell^{i+1}(u') = \Psi' \cap \Sigma$, where $\Psi \prec_v^{\rm c} \Psi'$ and $\varphi \wedge \neg \Diamond_v \varphi \in \Psi'$. Since $\Diamond_v \psi \in \Psi'$ and $\Psi \prec_v^{\rm c} \Psi'$, it follows that $\Diamond_v \Diamond_v \psi \in \Psi$. As $\Psi$ is a maximal consistent set, transitivity yields $\Diamond_v \psi \in \Psi$, and hence $\Diamond_v \psi \in \Psi \cap \Sigma$. Moreover, it is straightforward to verify by induction on $i \in \mathbb{N}$ that defect nodes have no successors. Thus, $(u, \Diamond_v \psi)$ is, in particular, a vertical defect of $\mathfrak{m}_i$, showing that $\mathrm{def}_{i+1}(u') \subseteq \mathrm{def}_i(u)$. Finally, the inclusion is strict: indeed, $\Diamond_v \varphi \in \mathrm{def}_i(u)$ but by construction $\varphi \wedge \neg \Diamond_v \varphi \in \Psi'$. Hence, $\Diamond_v \varphi \notin \Psi'$, so we conclude that $\Diamond_v \varphi \notin \mathrm{def}_{i+1}(u')$.
\end{proofof}

Let $\mathfrak{m}$ denote the limit of $(\mathfrak{m}_i)_{i \in \mathbb{N}}$. We conclude that $\mathfrak{m}$ is the desired $\Sigma$-moment simulating $\Phi \in W^{\rm c}$.
\begin{claim}
\label{Claim3}
$\mathfrak{m}$ is a $\Sigma$-moment satisfying $\mathfrak{m}\rightharpoonup \Phi$.
\end{claim}

\begin{proofof}{Claim \ref{Claim3}}
To see that $\mathfrak{m}$ is a $\Sigma$-moment, it suffices to show that for every $u \in |\mathfrak{m}_i|$ with $\Diamond_v \varphi \in \ell^{i}(u)$, there exists $j \geq i$ and $u' \in \mathfrak{m}_j$ such that $u \prec_v^{j} u'$ and $\varphi \in \ell^j(u')$. The result follows by an easy induction on $i \in \mathbb{N}$, whose details are left to the reader.

Thus, we conclude $\mathfrak{m} \rightharpoonup \Phi$ via the simulation $\sigma : |\mathfrak{m}| \to W^{\mathrm c}$ defined by setting $\sigma(u) := \Psi$ for every $u \in |\mathfrak{m}|$ with $\ell^{\mathfrak{m}}(u) = \Psi \cap \Sigma$.
\end{proofof}
\qedhere
\end{proof}

Next, we show that every quasimoment admits an extension to a moment, preserving simulations, by iteratively applying the strategy of the previous lemma.

\begin{lem}\label{QuasiMomToMom}
Let $\Sigma\Subset\langfull$. If $\mathfrak m$ is a $\Sigma$-quasimoment and $\mathfrak m \rightharpoonup \Phi$ for some $\Phi \in W^{\rm c}$, then there exists a $\Sigma$-moment $ex(\mathfrak m)$ such that 
\[\mathfrak m \sqsubseteq^\Sigma ex(\mathfrak m) \quad\text{and}\quad ex(\mathfrak m) \rightharpoonup \Phi.\]
\end{lem}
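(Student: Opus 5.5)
Fix a simulation $\sigma\colon|\mathfrak m|\to W^{\rm c}$ with $\sigma(r^{\mathfrak m})=\Phi$. The plan is to run the construction of Lemma~\ref{itMomWitNow} starting from $\mathfrak m$ instead of from a single point. Set $\mathfrak m_0:=\mathfrak m$ and $\sigma_0:=\sigma$. Given $\mathfrak m_i$ and a simulation $\sigma_i$ extending $\sigma$, obtain $\mathfrak m_{i+1}$ as follows. For every vertical defect $(u,\Diamond_v\varphi)$ of $\mathfrak m_i$, use the $\sf GL$ witness property (Lemma~\ref{WitnessCanonicalModel}) to pick $\Psi'$ with $\sigma_i(u)\prec_v^{\rm c}\Psi'$ and $\varphi\wedge\neg\Diamond_v\varphi\in\Psi'$. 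Add a fresh leaf $u'$ as an immediate successor of $u$, with $\ell(u'):=\Psi'\cap\Sigma$ and $\sigma_{i+1}(u'):=\Psi'$. The relation $\prec_v$ on $\mathfrak m_{i+1}$ is the transitive closure, so $u'$ lies above every ancestor of $u$. The result stays tree-like, and $\mathfrak m$ is a root- and label-preserving subtree of each $\mathfrak m_i$. Hence once the process stabilises we get $\mathfrak m\sqsubseteq^\Sigma ex(\mathfrak m)$ via inclusion.

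Next, check that each $\mathfrak m_{i+1}$ is a quasimoment and that $\sigma_{i+1}$ is a simulation. The labelling condition holds by definition. For the homomorphism property, only pairs $w\prec_v u'$ with $w$ equal to $u$ or an ancestor of $u$ need checking. By transitivity of $\prec_v^{\rm c}$ (Proposition~\ref{Canonicalmodel$[K4,K4]^e$}) we have $\sigma(w)\prec_v^{\rm c}\sigma(u)\prec_v^{\rm c}\Psi'$, so $\sigma(w)\prec_v^{\rm c}\Psi'$. The label condition $\ell(w)\prec_v^\Sigma\ell(u')$ then follows exactly as in Claim~\ref{Claim1}: $\Psi'\ni\psi$ or $\Psi'\ni\Diamond_v\psi$ gives $\Diamond_v\psi\in\sigma(w)$ using the $\sf K4$ axiom.

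The main obstacle is termination. Nodes of the original $\mathfrak m$ need not be leaves, so the argument of Claim~\ref{Claim2} does not apply to them verbatim. Nodes of $\mathfrak m$ are finitely many and each has finitely many defects, so the first round adds only finitely many fresh nodes. From then on, every new defect sits at a fresh node, which is a leaf when it is processed. For a fresh node $u'$ realising $(u,\Diamond_v\varphi)$, I would show that the set $D(u')$ of formulas $\Diamond_v\psi\in\ell(u')$ satisfies $D(u')\subsetneq D(u)$. Inclusion holds because $\Diamond_v\psi\in\Psi'$ gives $\Diamond_v\psi\in\sigma(u)$ by transitivity. Strictness holds because $\Diamond_v\varphi\in D(u)$ while $\neg\Diamond_v\varphi\in\Psi'$. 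This measure, counting all diamond formulas rather than only unresolved defects, avoids any issue with $u$ having other successors. Since $\Sigma$ is finite, each branch of fresh nodes has length at most $|\Sigma|$ and branching is finite, so by König's lemma the construction stops at some $\mathfrak m_k$.

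Finally, the limit $ex(\mathfrak m):=\mathfrak m_k$ has no vertical defects, because any remaining defect would trigger a further extension. It is therefore a $\Sigma$-moment, and $\sigma_k$ is a simulation sending the root $r^{\mathfrak m}$ to $\Phi$, so $ex(\mathfrak m)\rightharpoonup\Phi$.
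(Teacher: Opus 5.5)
Your proof is correct, but it is organised differently from the paper's. The paper does not rerun the step-by-step repair inside $\mathfrak m$. Instead it uses Lemma~\ref{itMomWitNow} as a black box: for a node $w$ of $\mathfrak m$ carrying a vertical defect, it takes a whole $\Sigma$-moment $\mathfrak m_w \rightharpoonup \sigma(w)$ and grafts it onto $w$, identifying $w$ with the root of $\mathfrak m_w$. It then extends the simulation to $\sigma\cup\sigma_w$ and recurses on the number of defective nodes.

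In the paper's route, termination is immediate. After grafting, $w$ has no defects, and the grafted nodes have none either, since their witnesses lie inside $\mathfrak m_w$. So no new measure is needed. The price is that redundant witnesses get added, and the check that ancestors of $w$ are correctly related to the grafted nodes (via transitivity of $\prec_v^{\rm c}$) is left to the reader.

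Your route is self-contained and adds exactly one witness per defect, but it must supply its own termination argument. Your choice of measure is exactly what makes this work when the nodes of $\mathfrak m$ are not leaves: you count all $\Diamond_v$-formulas in the label, rather than only the unresolved defects used in Claim~\ref{Claim2} of Lemma~\ref{itMomWitNow}. That claim's inclusion $\mathrm{def}_{i+1}(u')\subseteq\mathrm{def}_i(u)$ relies on defect nodes having no successors, which fails for nodes of $\mathfrak m$. You also make the ancestor check explicit.

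Two minor points. Bounded depth plus finite branching already gives finiteness, so König's lemma is unnecessary. In the homomorphism step, $\sigma$ should read $\sigma_i$.
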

\begin{proof}
Let $\mathfrak{m}$ be a $\Sigma$-quasimoment and let $\sigma: |\mathfrak{m}| \to W^{\rm c}$ be a simulation such that $\mathfrak{m} \rightharpoonup \Phi$ for some $\Phi \in W^{\rm c}$. We construct an extension $ex(\mathfrak{m}) \in \mathbb{M}_\Sigma$ satisfying the proper condition of the $\Sigma$-moments: for every node $w \in |ex(\mathfrak{m})|$ and $\vd \varphi \in\Sigma$,
    \begin{equation}\label{eqLemWomNow}
    \text{if } \vd \varphi \in \ell^{ex(\mathfrak{m})}(w), \text{ then there exists } u \succ_v^{ex(\mathfrak{m})} w \text{ with } \varphi \in \ell^{ex(\mathfrak{m})}(u).
    \end{equation}

    We define $ex(\mathfrak{m})$ recursively on the number of nodes of $\mathfrak{m}$ carrying vertical defects.
    \begin{itemize}
        \item If $\mathfrak{m}$ has no vertical defects, we simply set $ex(\mathfrak{m}) := \mathfrak{m}$.
        \item Otherwise, let $w \in |\mathfrak{m}|$ be a node with a vertical defect, and let $\ell^{\mathfrak{m}}(w) = \Psi \cap \Sigma$ for some $\Psi \in W^{\rm c}$. By Lemma~\ref{itMomWitNow}, there exist a $\Sigma$-moment $\mathfrak{m}_w$ and a simulation $\sigma_w : |\mathfrak{m}_w| \to W^{\rm c}$ such that $\mathfrak{m}_w \rightharpoonup \Psi$. We obtain a new $\Sigma$-quasimoment $\mathfrak{m}'$ from $\mathfrak{m}$ by grafting $\mathfrak{m}_w$ onto the node $w$, identifying $w$ with the root of $\mathfrak{m}_w$ while preserving the structure of $\mathfrak{m}$. We then set $ex(\mathfrak{m}) := ex(\mathfrak{m}')$.
\end{itemize}
A straightforward induction on the number of nodes of $\mathfrak{m}$ with vertical defects shows that $ex(\mathfrak{m})$ is a $\Sigma$-quasimoment extending $\mathfrak{m}$ and satisfying~\eqref{eqLemWomNow}, whose details are left to the reader. Hence, $ex(\mathfrak{m})$ is a $\Sigma$-moment.

Finally, the simulation $\sigma$ extends naturally along the construction. More precisely, letting $S \subseteq |\mathfrak{m}|$ be the set of nodes with vertical defects, we define
\[
\sigma_{ex} := \sigma \cup \bigcup_{w \in S} \sigma_w.
\]
Observe that each $\sigma_w$ agrees with $\sigma$ on their common domain (namely at $w \in S$), so the union is well defined. By construction, $\sigma_{ex} : |ex(\mathfrak{m})| \to W^{\rm c}$ is a simulation extending $\sigma$ and preserving its value at the root. In particular, $ex(\mathfrak{m}) \rightharpoonup \Phi$.
\end{proof}

The extraction of quasimodels is performed, roughly speaking, by taking limits of sequences of moments built by successively realizing horizontal defects while preserving simulations, as described in the following lemma.

\begin{lem}\label{itMomWitLater}
Let $\Sigma\Subset\langfull$ and let $\Phi\in W^{\rm c}$. If $\mathfrak m \rightharpoonup \Phi$ and $\mathfrak m$ has a horizontal defect $\delta$, then there exist $\Phi' \,\rh^{\mathrm c}\, \Phi$ and $\mathfrak m' \in \mathbb{M}_\Sigma$\ignore{$\mathfrak m'\rh^\Sigma \mathfrak m$ \sofia{I erased $\mathfrak m'\rh^\Sigma \mathfrak m$ here since this is part of the definition of realizing the horizontal defect.}} such that $\mathfrak m'$ realizes $\delta$ and $\mathfrak m'\rightharpoonup \Phi'$.   
\end{lem}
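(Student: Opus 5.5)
Let $\sigma\colon|\mathfrak m|\to W^{\rm c}$ be a simulation with $\sigma(r^{\mathfrak m})=\Phi$, and let $\delta=(w,\hd\varphi)$ with $\hd\varphi\in\ell^{\mathfrak m}(w)=\sigma(w)\cap\Sigma$. The plan is to push the whole image $\sigma[|\mathfrak m|]$ horizontally, using the confluence properties of the canonical model (Proposition~\ref{Canonicalmodel$[K4,K4]^e$}\eqref{itForConf}). This gives a quasimoment $\mathfrak n$ of the same shape, simulating some $\Phi'\succ^{\rm c}_h\Phi$, with $\varphi$ in the label of the copy of $w$. We then complete $\mathfrak n$ to a moment with Lemma~\ref{QuasiMomToMom}.

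\textbf{Step 1: choose images along the path to $w$.} Let $r=w_0\prec_v w_1\prec_v\dots\prec_v w_k=w$ be the branch from the root to $w$; it exists since $\mathfrak m$ is tree-like. By the witness property (Lemma~\ref{WitnessCanonicalModel}) pick $\Theta_k\succ^{\rm c}_h\sigma(w)$ with $\varphi\in\Theta_k$. Then go down the branch using (LC). From $\sigma(w_{i})\prec^{\rm c}_v\sigma(w_{i+1})\prec^{\rm c}_h\Theta_{i+1}$ we obtain $\Theta_i$ with $\sigma(w_i)\prec^{\rm c}_h\Theta_i\prec^{\rm c}_v\Theta_{i+1}$. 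Set $\tau(w_i):=\Theta_i$ and $\Phi':=\Theta_0$, so that $\Phi\prec^{\rm c}_h\Phi'$.

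\textbf{Step 2: extend to the rest of the tree.} Go top-down along the remaining nodes. If $u$ is a child of an already-treated node $u_0$, we have $\sigma(u)\succ^{\rm c}_v\sigma(u_0)\prec^{\rm c}_h\tau(u_0)$. Apply (CR) to get $\tau(u)$ with $\sigma(u)\prec^{\rm c}_h\tau(u)\succ^{\rm c}_v\tau(u_0)$. By transitivity of $\prec^{\rm c}_v$ (Proposition~\ref{Canonicalmodel$[K4,K4]^e$}\eqref{itTrans}, since $\sf GL\supseteq K4$), $\tau$ becomes a $\prec_v$-homomorphism: every strict-order pair in the tree is a composition of parent--child steps.

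Now define $\mathfrak n$ on $|\mathfrak m|$ with the same order and labels $\ell^{\mathfrak n}(u):=\tau(u)\cap\Sigma$, and take $e$ to be the identity. Then $\tau$ is a simulation and $\mathfrak n$ is a $\Sigma$-quasimoment; the homomorphism condition on labels follows exactly as in Claim~\ref{Claim1}. Since $\sigma(u)\prec^{\rm c}_h\tau(u)$, Lemma~\ref{BoxCanonicalModels} together with transitivity of $\prec^{\rm c}_h$ gives $\ell^{\mathfrak m}(u)\prec^\Sigma_h\ell^{\mathfrak n}(u)$. The transitivity of $\prec^{\rm c}_h$ is what handles the case $\hd\psi\in\tau(u)$. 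Hence $\mathfrak m\prec^\Sigma_h\mathfrak n$ via the identity embedding, and $\varphi\in\ell^{\mathfrak n}(w)$.

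\textbf{Step 3: complete to a moment.} Lemma~\ref{QuasiMomToMom} gives $\mathfrak m':=ex(\mathfrak n)\in\mathbb M_\Sigma$ with $\mathfrak n\sqsubseteq^\Sigma\mathfrak m'$ and $\mathfrak m'\rightharpoonup\Phi'$. Composing the label- and root-preserving embedding $\mathfrak n\hookrightarrow\mathfrak m'$ with the identity shows that $\mathfrak m'$ realizes $\delta$.

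The main obstacle is the bookkeeping in Steps 1--2: making sure each node gets a single, consistent image, and that the vertical relations in $\mathfrak n$ are exactly those of $\mathfrak m$ so that the embedding condition holds. Fixing the tree shape and only relabeling settles the latter. Processing the branch to $w$ first with (LC), and then everything else with (CR) from parents, settles the former, because every node outside the branch has its parent already treated when it is reached.
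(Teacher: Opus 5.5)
Your proposal is correct and follows essentially the same route as the paper. Both arguments build a same-shaped copy of $\mathfrak m$: they witness $\hd\varphi$ horizontally at $w$, push down to the root via (LC), reach every other node via (CR) from its parent, and then complete the resulting quasimoment with Lemma~\ref{QuasiMomToMom}. The only difference is bookkeeping: the paper grows a partial embedding outward from $w$ in rounds onto fresh nodes, whereas you treat the root-to-$w$ branch first and then the remaining nodes top-down on the same carrier with the identity embedding, which is slightly cleaner.
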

\begin{proof}
Let $\delta = (w, \Diamond_h \varphi)$ be a horizontal defect of $\mathfrak{m}$ and let $\sigma : |\mathfrak{m}| \to W^{\rm c}$ be the simulation such that $\mathfrak{m} \rightharpoonup \Phi$. We define recursively a sequence $\{(\mathfrak{m}_i, e_i, \sigma_i)\}_{i \in \mathbb{N}}$ such that, for each $i \in \mathbb{N}$, $\mathfrak{m}_i$ is a $\Sigma$-quasimoment, $e_i$ is a partial embedding, and $\sigma_i : |\mathfrak{m}_i| \to W^{\rm c}$ is a simulation satisfying
\[
\sigma(u) \prec_h^{\rm c} \sigma_i(e_i(u)) \quad \text{for all } u \in \mathrm{Dom}(e_i)
\]
where $\mathrm{Dom}(e_i)$ denotes the domain of the partial embedding $e_i$.
\begin{enumerate}[label=$\bullet$, wide, labelwidth=!, labelindent=5pt, itemsep=0pt]
    \item We first define $\mathfrak{m}_0 := (\{w'\}, \prec_v^0, \ell^0)$, where $\prec_v^0 := \emptyset$ and $\ell^0(w') := \Psi \cap \Sigma$ for some $\Psi \in W^{\rm c}$ such that $\Psi \succ_h^{\rm c} \sigma(w)$, obtained by the witness Lemma~\ref{WitnessCanonicalModel} from the fact that $\Diamond_h \varphi \in \ell^{\mathfrak{m}}(w) = \sigma(w) \cap \Sigma$. Define $e_0(w) := w'$ and $\sigma_0(w') := \Psi$.
    \item Given $(\mathfrak{m}_i, e_i, \sigma_i)$, we construct $(\mathfrak{m}_{i+1}, e_{i+1}, \sigma_{i+1})$ by extending $\mathfrak{m}_i$, $e_i$, and $\sigma_i$ as follows:

    For each $v \in |\mathfrak{m}| \setminus \mathrm{Dom}(e_i)$ such that either $v$ is an immediate $\prec_v^{\mathfrak m}$-predecessor or an immediate $\prec_v^{\mathfrak m}$-successor of some $u\in\mathrm{Dom}(e_i)$, that is,
    \[
    v \prec_v^{\mathfrak m} u
    \quad\text{and there is no }w \neq u, v \text{ with }
    v \prec_v^{\mathfrak m} w \prec_v^{\mathfrak m} u
    \]
    or
    \[
    u \prec_v^{\mathfrak m} v
    \quad\text{and there is no }w \neq u, v\text{ with }
    u \prec_v^{\mathfrak m} w \prec_v^{\mathfrak m} v,
    \]
we add a fresh node $v'$ and extend $e_i$ by setting $e_{i+1}(v):=v'$. The position of $v'$ in $\mathfrak{m}_{i+1}$ is determined by:
    \begin{enumerate}
    \item[(i)] $v' \prec_v^{{i+1}} e_i(u)$ if $v \prec_v^{\mathfrak{m}} u$;
    \item[(ii)] $v' \succ_v^{{i+1}} e_i(u)$ if $v \succ_v^{\mathfrak{m}} u$.
    \end{enumerate}
    
    Using the properties (LC) and (CR) of the canonical model, we define the label and simulation for $v'$ by setting $\ell^{{i+1}}(v') := \Upsilon \cap \Sigma$ and $\sigma_{i+1}(v') := \Upsilon$, where $\Upsilon \in W^{\rm c}$ satisfies one of the following conditions:
    \begin{enumerate}
    \item[(i)] $\sigma(v) \prec_h^{\rm c} \Upsilon \prec_v^{\rm c} \sigma_i(e_i(u))$ by the property (LC) if $v \prec_v^{\mathfrak{m}} u$ given that $\sigma(v) \prec_v^{\rm c} \sigma(u) \prec_h^{\rm c} \sigma_i(e_i(u))$;
    \item[(ii)] $\sigma(v) \prec_h^{\rm c} \Upsilon \succ_v^{\rm c} \sigma_i(e_i(u))$ by the property (CR) if $v \succ_v^{\mathfrak{m}} u$ given that $\sigma(v) \succ_v^{\rm c} \sigma(u) \prec_h^{\rm c} \sigma_i(e_i(u))$.
    \end{enumerate}
\end{enumerate}

\begin{figure}[t]
\centering
\small
\begin{tikzpicture}
	\begin{pgfonlayer}{nodelayer}
    	\node [style=none, label=below:$\mathfrak{m}$] (0) at (-1.5, 0.5) {};
		\node [style=none] (1) at (-3, 4) {};
		\node [style=none] (2) at (0, 4) {};
    	\node [style=none] (3) at (-2, 3) {};
		\node [style=none] (5) at (-0.75, 2.75) {};
		\node [style=none, label=right: {\scriptsize $\mathrm{Dom}(e_i)$}] (8) at (-1.5, 1.75) {};
		\node [style=sroot, label=left:$u$] (9) at (-1.5, 2.5) {};
		\node [style=sroot, label=above:$v$, red] (10) at (-1.5, 3.5) {};
		\node [style=sroot, label=below:$v$, red] (11) at (-1.5, 1.25) {};
		\node [style=sroot, label=above:$v'$, red] (19) at (1.5, 3.5) {};
		\node [style=sroot, label=below:$v'$, red] (20) at (1.5, 1.25) {};
		\node [style=none] (21) at (1, 3) {};
		\node [style=none] (22) at (2.25, 2.75) {};
		\node [style=none, label=right:$\mathfrak{m}_i$] (23) at (1.5, 1.75) {};
		\node [style=sroot, label=right:$e_i(u)$] (24) at (1.5, 2.5) {};

        \draw [style=edgeto] (9) to (10);
		\draw [style=edgeto] (11) to (9);
		\draw [style=edgeto, red, dashed] (10) to (19);
		\draw [style=edgeto, red, dashed] (11) to (20);

		\draw[style=edgeto] (9) to (24);
		\draw[style=edgeto, red, dashed] (24) to (19);
		\draw[style=edgeto, red, dashed] (20) to (24);
	\end{pgfonlayer}
\begin{pgfonlayer}{background}

\fill[black!10, opacity=0.7]
    (0.center)
        to (1.center)
        to[in=-150, out=30, looseness=0.75] (2.center)
        to (0.center)
    -- cycle;

		\draw [black!20] (0.center) to (1.center);
		\draw [in=-150, out=30, looseness=0.75, black!20] (1.center) to (2.center);
		\draw [black!20] (2.center) to (0.center);
    
\end{pgfonlayer}

\begin{pgfonlayer}{edgelayer}
\fill[red!10, opacity=0.7]
    (5.center)
        to[in=60, out=-165] (8.center)
        to[in=-90, out=120, looseness=1.25] (3.center)
        to[in=120, out=-15, looseness=1.25,] (5.center)
    -- cycle;

\fill[red!10, opacity=0.7]
    (22.center)
        to[in=60, out=-165] (23.center)
        to[in=-90, out=120, looseness=1.25] (21.center)
        to[in=120, out=-15, looseness=1.25,] (22.center)
    -- cycle;

    \draw [in=60, out=-165, red!20] (5.center) to (8.center);
	\draw [in=-90, out=120, looseness=1.25, red!20] (8.center) to (3.center);
	\draw [in=120, out=-15, looseness=1.25, red!20] (3.center) to (5.center);

    \draw [in=60, out=-165, red!20] (22.center) to (23.center);
	\draw [in=-90, out=120, looseness=1.25, red!20] (23.center) to (21.center);
	\draw [in=120, out=-15, looseness=1.25, red!20] (21.center) to (22.center);
\end{pgfonlayer}
\end{tikzpicture}
\caption{Inductive step in the construction of the sequence $\{(\mathfrak{m}_i, e_i, \sigma_i)\}_{i \in \mathbb{N}}$.}
\end{figure}

First, we show that the construction is well-defined.

\begin{claim}\label{Claim1itMomWitLater}
The sequence $\{(\mathfrak{m}_i, e_i, \sigma_i)\}_{i \in \mathbb{N}}$ is well-defined.
\end{claim}

\begin{proofof}{Claim \ref{Claim1itMomWitLater}}
More precisely, we show for every $i \in \mathbb{N}$ that
\begin{enumerate}[label=(\alph*)]
    \item\label{C4a}  $\mathrm{Dom}(e_i) \subseteq |\mathfrak{m}|$ equipped with the relation $\prec_v^{\mathfrak{m}}$ restricted to it forms a tree;
    \item\label{C4b}  $e_i : \mathrm{Dom}(e_i) \to |\mathfrak{m}_i|$ is a $\prec_v$-isomorphism such that $\mathrm{Im}(e_i) = |\mathfrak{m}_i|$, where $\mathrm{Im}(e_i)$ denotes the image of $e_i$;
    \item\label{C4c}  $\mathfrak{m}_i$ is a $\Sigma$-quasimoment;
    \item\label{C4d}  $\sigma_i : |\mathfrak{m}_i| \to W^{\rm c}$ is a simulation satisfying $\sigma(u) \prec_h^{\rm c} \sigma_i(e_i(u))$ for every $u \in \mathrm{Dom}(e_i)$.
\end{enumerate}
The proof proceeds by induction on $i \in \mathbb{N}$. The base case is immediate from the construction. For the inductive step, assume that $(\mathfrak{m}_i, e_i, \sigma_i)$ satisfies \ref{C4a}--\ref{C4d}, and consider the extension to $(\mathfrak{m}_{i+1}, e_{i+1}, \sigma_{i+1})$ as defined in the sequence. We provide details for the first three properties, while the verification of \ref{C4d} follows directly from the definition of the recursive construction and is left to the reader.

\begin{enumerate}[wide, labelwidth=!, labelindent=5pt, itemsep=0pt]
    \item[\ref{C4a}] By the inductive hypothesis, $\mathrm{Dom}(e_i)$ is a tree. The map $e_{i+1}$ is defined to extend $e_i$ with nodes $v \in |\mathfrak{m}| \setminus \mathrm{Dom}(e_i)$ which are immediate $\prec_v^\mathfrak{m}$-predecessors or $\prec_v^\mathfrak{m}$-successors of some $u \in \mathrm{Dom}(e_i)$, together with the edge witnessing this relation. If $v \succ_v^\mathfrak{m} u$, it is straightforward that the extension is still a tree. Otherwise, if $v \prec_v^\mathfrak{m} u$, since $v \notin \mathrm{Dom}(e_i)$ but $u \in \mathrm{Dom}(e_i)$, we deduce that $u$ is the root of the tree $\mathrm{Dom}(e_i)$. Therefore, we have the extension is also a tree for this case. 

    \item[\ref{C4b}] By construction, every new node $v'$ of $\mathfrak{m}_{i+1}$ arises from a unique $v \in |\mathfrak{m}| \setminus \mathrm{Dom}(e_i)$ which is immediate adjacent (in the $\prec_v$-tree structure) to some $u \in \mathrm{Dom}(e_i)$. This element $v$ is added to the domain of $e_{i+1}$ and mapped to $v'$ via $e_{i+1}(v) := v'$, so that $\mathrm{Im}(e_{i+1}) = |\mathfrak{m}_{i+1}|$. Since $e_i$ is a $\prec_v$-isomorphism by the inductive hypothesis and the construction introduces fresh nodes exactly corresponding to previously unmapped elements while respecting adjacency in the tree structure, it follows that $e_{i+1}$ is again bijective. Moreover, the extension is defined so as to preserve the $\prec_v$-relation, and therefore $e_{i+1}$ is a $\prec_v$-isomorphism.

    \item[\ref{C4c}] By \ref{C4a}, $\mathrm{Dom}(e_{i+1})$ is a tree, and by \ref{C4b} so is $(|\mathfrak{m}_{i+1}|, \prec_v^{i+1})$. It remains to verify that the labelling satisfies the homomorphism condition: this follows from the inductive hypothesis together with the definition of the labels of the newly added nodes and the transitivity properties of maximal consistent sets. The details are routine and left to the reader. \qedhere
\end{enumerate}
\end{proofof}

Next, we show that the sequence stabilizes, meaning that it is eventually constant at some element, which we call its {\em limit}.

\begin{claim}\label{Claim3itMomWitLater}
The sequence $\{(\mathfrak{m}_i, e_i, \sigma_i)\}_{i \in \mathbb{N}}$ stabilizes.
\end{claim}

\begin{proofof}{Claim \ref{Claim3itMomWitLater}}
Since the construction properly extends by adding fresh nodes corresponding to nodes in $\mathfrak{m}$ that are not in the domain of the corresponding embedding, it suffices to show that there exists $k \in \mathbb{N}$ such that $\mathrm{Dom}(e_k) = |\mathfrak{m}|$.

To this end, observe that $\mathrm{Dom}(e_i) \subseteq \mathrm{Dom}(e_{i+1})$ for all $i \in \mathbb{N}$ by a straightforward induction on $i \in \mathbb{N}$. Moreover, whenever $\mathrm{Dom}(e_i) \subsetneq |\mathfrak{m}|$, the construction adds at least one new element, so that 
$\mathrm{Dom}(e_i) \subsetneq \mathrm{Dom}(e_{i+1})$. Therefore, since $|\mathfrak{m}|$ is finite, there exists $k \in \mathbb{N}$ such that $\mathrm{Dom}(e_k) = |\mathfrak{m}|$, and the sequence stabilizes. \qedhere
\end{proofof}

Let $(\hat{\mathfrak{m}}, \hat{e}, \hat{\sigma})$ denote the limit of the sequence $\{(\mathfrak{m}_i, e_i, \sigma_i)\}_{i \in \mathbb{N}}$. We show that this limit is already a successor realizing the horizontal defect $\delta$, and that it simulates a maximal consistent set that is a horizontal successor of the one simulated by $\mathfrak{m}$.

\begin{claim}\label{Claim2itMomWitLater}
\begin{enumerate} 
    \item\label{C2succR} $\hat{\mathfrak{m}}$ is a successor of $\mathfrak{m}$ realizing $\delta$.
    \item\label{C2ssim} $\hat{\mathfrak{m}} \rightharpoonup \Phi'$ for some $\Phi' \succ_h^{\rm c} \Phi$.
\end{enumerate}
\end{claim}

\begin{proofof}{Claim \ref{Claim2itMomWitLater}}
\begin{enumerate}[wide, labelwidth=!, labelindent=5pt, itemsep=0pt] 
    \item[\ref{C2succR}.] It suffices to show that $\mathfrak{m} ,\hr^\Sigma, \hat{\mathfrak{m}}$ via the embedding $\hat{e}$. Once this is established, we obtain that $\varphi \in \ell^{\hat{\mathfrak{m}}}(\hat{e}(w))$, since $\varphi \in \ell^{i}(e_i(w))$ for every $i \in \mathbb{N}$. The latter fact follows by a straightforward induction on $i$, whose details are left to the reader.

    As observed in Claim \ref{Claim3itMomWitLater}, we have $\mathrm{Dom}(\hat{e}) = |\mathfrak{m}|$, so $\hat{e} : |\mathfrak{m}| \to |\hat{\mathfrak{m}}|$ is a (total) embedding. To verify that $\mathfrak{m} \,\hr^\Sigma\, \hat{\mathfrak{m}}$, let $\Diamond_h \psi \in \Sigma$ and suppose that either $\psi \in \ell^{\hat{\mathfrak{m}}}(\hat{e}(u))$ or $\Diamond_h \psi \in \ell^{\hat{\mathfrak{m}}}(\hat{e}(u))$ for some $u \in |\mathfrak{m}|$. Since $\ell^{\hat{\mathfrak{m}}}(\hat{e}(u)) = \hat{\sigma}(\hat{e}(u)) \cap \Sigma$ and $\sigma(u) \prec_h^{\rm c} \hat{\sigma}(\hat{e}(u))$, it follows from the definition of $\prec_h^{\rm c}$ that either $\Diamond_h \psi \in \sigma(u)$ or $\Diamond_h \Diamond_h \psi \in \sigma(u)$. In the latter case, by transitivity and maximal consistency we also obtain $\Diamond_h \psi \in \sigma(u)$. Hence, in both cases we have that $\Diamond_h \psi \in \sigma(u)$, and therefore we conclude $\Diamond_h \psi \in \ell^{\mathfrak{m}}(u)$.

    \item[\ref{C2ssim}.] Let $\Phi' := \hat{\sigma}(r^{\hat{\mathfrak{m}}})$. Since $r^{\hat{\mathfrak{m}}} = \hat{e}(r^{\mathfrak{m}})$, and recalling that $\sigma(r^{\mathfrak{m}}) = \Phi$, it follows from the construction that $\Phi' \succ_h^{\rm c} \Phi$. Hence $\hat{\mathfrak{m}} \rightharpoonup \Phi'$ as required. \qedhere
\end{enumerate}
\end{proofof}

Finally, we extend the $\Sigma$-quasimoment $\hat{\mathfrak{m}}$ to a $\Sigma$-moment $\mathfrak{m}'$ such that $\mathfrak{m}' \rightharpoonup \Phi'$ using Lemma~\ref{QuasiMomToMom}. Moreover, $\hat{e} : |\mathfrak{m}| \to |\mathfrak{m}'|$  witnesses that $\mathfrak m\hr^\Sigma \mathfrak m'$, showing that $\mathfrak{m}'$ realizes the horizontal defect $\delta$. This concludes the proof.
\end{proof}

\section{Quasimodel extraction}\label{sec:Extract}

Next, we show how simulations can be used to extract quasimodels from the canonical model, leading to a proof of completeness for $[\mathsf{K4},\mathsf{GL}]^{\mathsf e}$. A quasimodel is an intermediate structure that resembles an ${\sf e}$-frame, in which the horizontal frame forms a tree and each point carries an associated moment, ensuring realizability of the horizontal defects.

\begin{defn} \label{def:Quasimodel}
Let $\Sigma\Subset\langfull$. A {\em weak $\Sigma$-quasimodel} is a tuple $\mathfrak Q = (T^\mathfrak{Q} ,\hr^\mathfrak{Q} , \mu^\mathfrak{Q})$, where 
\begin{enumerate}
    \item $(T^\mathfrak{Q} ,\hr^\mathfrak{Q} ) $ is a transitive, irreflexive tree with root $r^\mathfrak{Q}$ and
    \item $\mu^\mathfrak{Q} \colon T^\mathfrak{Q} \to \mathbb M_\Sigma$ is a homomorphism with respect to $\prec_h$.
\end{enumerate}
We say that $\mathfrak Q$ is a {\em $\Sigma$-quasimodel} if for every $t \in T^\mathfrak{Q}$ and every horizontal defect $\delta$ of the $\Sigma$-moment $\mu^\mathfrak{Q}(t)$, there exists $s\rh^\mathfrak{Q} t$ such that $\mu^\mathfrak{Q}(s)$ realizes $\delta$.
\end{defn}

We say that a formula $\varphi$ is {\em satisfied} in a $\Sigma$-quasimodel $\mathfrak Q$ if there exist $t \in T^\mathfrak{Q}$ and $w \in |\mu^\mathfrak{Q}(t)|$ such that $\varphi \in \ell^{\mu^\mathfrak{Q}(t)}(w)$. Other related notions are defined on $\Sigma$-quasimodels in the usual way.

Recall that moments as we have defined them are specifically tailored for $\sf GL$ and, since the horizontal relation on a quasimodel is assumed transitive and irreflexive, it will always be a $\sf K4$ relation, and a $\sf GL$ relation when the quasimodel is finite. With this in mind, the following should not be surprising.

\begin{lem}\label{lemQuasiToMod}
Let $\Sigma\Subset\langfull$. If a formula $\varphi \in \Sigma$ is satisfied in a $\Sigma$-quasimodel $\mathfrak Q$, then it is satisfied in a $(\mathsf{K4}\rtimes \mathsf{GL})^{\sf e}$-model $\mathfrak M$.
If moreover $\mathfrak Q$ is finite, then $\mathfrak M$ can be taken to be a $(\mathsf{GL}\rtimes \mathsf{GL})^{\sf e}$-model.
\end{lem}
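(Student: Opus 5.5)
The plan is to read the $\sf e$-model off $\mathfrak Q$ directly: the vertical frames are the underlying frames of the moments, and the valuation is given by the labels. The one subtlety is the horizontal embeddings. An $\sf e$-frame needs a coherent family $e_{xy}$ (condition~\ref{itETrans} of Definition~\ref{defEmbModel}), and the embeddings witnessing $\mu^{\mathfrak Q}(t)\hr^\Sigma\mu^{\mathfrak Q}(s)$ in $\mathfrak Q$ are neither unique nor coherent. Moreover, the embedding that realizes a given horizontal defect need not be the one we would like to fix for the pair $(t,s)$. I would resolve both problems at once by unravelling $\mathfrak Q$ along explicitly chosen embeddings.

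\emph{Step 1 (the horizontal frame).} Let $W$ be the set of finite sequences $x=(t_0,e_1,t_1,\dots,e_n,t_n)$ with $t_0=r^{\mathfrak Q}$ and $t_{i-1}\hr^{\mathfrak Q}t_i$. Each $e_i\colon|\mu^{\mathfrak Q}(t_{i-1})|\to|\mu^{\mathfrak Q}(t_i)|$ is required to be an embedding witnessing $\mu^{\mathfrak Q}(t_{i-1})\hr^\Sigma\mu^{\mathfrak Q}(t_i)$. Let $x\hr y$ iff $x$ is a proper initial segment of $y$. This is a transitive irreflexive tree in which each node has finitely many predecessors, so it is a $\sf K4$-frame. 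Put $f(x)=(|\mu^{\mathfrak Q}(t_n)|,\vr^{\mu^{\mathfrak Q}(t_n)})$, a finite transitive irreflexive frame and hence a $\sf GL$-frame. For $x\hr y$ let $e_{xy}$ be the composite of the embeddings appearing in $y$ after $x$; coherence $e_{yz}\circ e_{xy}=e_{xz}$ then holds by construction. I also need that the relation $\hr^\Sigma$ on types from Section~\ref{sec:Moments} is transitive: if $\Phi\hr^\Sigma\Psi\hr^\Sigma\Theta$ and $\varphi$ or $\hd\varphi$ lies in $\Theta$, then $\hd\varphi\in\Psi$, hence $\hd\varphi\in\Phi$. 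It follows that $\ell(w)\hr^\Sigma\ell(e_{xy}(w))$ for every $x\hr y$. Define the valuation by $w\in V^x(p)$ iff $p\in\ell^{\mu^{\mathfrak Q}(t_n)}(w)$.

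\emph{Step 2 (truth lemma).} By induction on $\psi\in\Sigma$, show that $(x,w)\Vdash\psi$ iff $\psi\in\ell(w)$, where $\ell$ is the labelling of the last moment of $x$. The Boolean cases follow from the type conditions. For $\vd\varphi$, the left-to-right direction uses that the labelling is a $\vr$-homomorphism into $\hr^\Sigma_v$ (if $w\vr u$ and $\varphi\in\ell(u)$ then $\vd\varphi\in\ell(w)$). The right-to-left direction is exactly the moment condition. For $\hd\varphi$, left to right: if $x\hr y$ and $\varphi\in\ell(e_{xy}(w))$, then $\hd\varphi\in\ell(w)$ by $\ell(w)\hr^\Sigma\ell(e_{xy}(w))$. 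Right to left: $(w,\hd\varphi)$ is a horizontal defect of $\mu^{\mathfrak Q}(t_n)$, so the quasimodel condition gives $s\rh^{\mathfrak Q}t_n$ and an embedding $e$ realizing it. Then $y=x^\frown(e,s)\in W$ satisfies $x\hr y$, $e_{xy}=e$, and $\varphi\in\ell(e(w))$. This is precisely why the embeddings are recorded in the nodes. Since $\varphi$ is satisfied in $\mathfrak Q$ at some $(t,w)$, pick any $x$ ending in $t$; one exists by following the finite chain of $\hr^{\mathfrak Q}$-predecessors of $t$ and choosing witnessing embeddings. Then $(x,w)\Vdash\varphi$.

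\emph{Step 3 (finite case) and expected obstacle.} If $\mathfrak Q$ is finite, then there are only finitely many embeddings between any two of its finite moments. Strictly increasing $\hr^{\mathfrak Q}$-chains in a finite irreflexive transitive tree have bounded length, so $W$ is finite. A finite transitive irreflexive relation is conversely well-founded, so $(W,\hr)$ is a $\sf GL$-frame and the model lies in $(\mathsf{GL}\rtimes\mathsf{GL})^{\sf e}$. The step I expect to need the most care is the coherence/realization issue handled by the unravelling, in particular checking that "tree" in Definition~\ref{def:Quasimodel} gives finite predecessor chains so that every $t$ is reached by some finite sequence. If it does not, I would instead take $W$ to be the rooted paths through the tree with embeddings along them, and argue the same way.
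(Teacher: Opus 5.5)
Your proof is correct, but it takes a different route from the paper's. The paper keeps the horizontal frame $(T^{\mathfrak Q},\hr^{\mathfrak Q})$ of the quasimodel itself. For each pair $t\hr^{\mathfrak Q}s$ it simply chooses some embedding $e_{ts}$ witnessing $\mu^{\mathfrak Q}(t)\hr^\Sigma\mu^{\mathfrak Q}(s)$, reads off the valuation from the labels, and leaves the truth lemma as a ``straightforward induction''. That is shorter, and it yields a model on exactly the nodes of $\mathfrak Q$, so the finite case is immediate.

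However, as you anticipated, the paper's version silently requires two things of the chosen embeddings. They must satisfy the coherence condition $e_{su}\circ e_{ts}=e_{tu}$, and they must be the ones realizing the horizontal defects (the right-to-left $\hd$ case). Neither follows from Definition~\ref{def:Quasimodel} as stated, since that definition only guarantees \emph{some} successor and \emph{some} realizing embedding. Your unravelling along explicitly recorded embeddings makes coherence hold by composition, and it makes every realizing embedding available as its own branch. The cost is a larger horizontal tree, which you correctly show stays finite when $\mathfrak Q$ is finite.

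One small simplification: your worry at the end is unnecessary. Since $\mathfrak Q$ is rooted and $\hr^{\mathfrak Q}$ is transitive, $r^{\mathfrak Q}\hr^{\mathfrak Q}t$ for every $t\neq r^{\mathfrak Q}$. So $(r^{\mathfrak Q},e,t)$ already lies in $W$ for any witnessing $e$, and no predecessor chains are needed.
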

\begin{proof}
 Let $\mathfrak Q = (T^\mathfrak{Q} ,\hr^\mathfrak{Q} , \mu^\mathfrak{Q})$ be a $\Sigma$-quasimodel. By definition, whenever $t, s \in T^\mathfrak{Q}$ with $t \hr^\mathfrak{Q} s$, we have $\mu^\mathfrak{Q}(t) \prec_h^{\Sigma} \mu^\mathfrak{Q}(s)$, meaning that $\mu^\mathfrak{Q}(t)$ embeds into $\mu^\mathfrak{Q}(s)$ via some embedding $e_{ts} : |\mu^\mathfrak{Q}(t)| \to |\mu^\mathfrak{Q}(s)|$. In particular, for all $u, w \in |\mu^\mathfrak{Q}(t)|$, we have
\[
u \prec_v^{\mu^\mathfrak{Q}(t)} w \quad \text{iff} \quad e_{ts}(u) \prec_v^{\mu^\mathfrak{Q}(s)} e_{ts}(w).
\]
Thus, the triple $(\mathfrak F, \mu^\mathfrak{Q}, e)$ where $\mathfrak F := (T^\mathfrak{Q}, \hr^\mathfrak{Q})$ and $e$ assigns to each pair $t, s \in T^\mathfrak{Q}$ with $t \hr^\mathfrak{Q} s$ an embedding $e_{ts}$, forms a $(\mathsf{K4}\rtimes \mathsf{GL})^{\sf e}$-frame. Moreover, the frame is finite if $\mathfrak Q$ is, hence in this case, it is a $(\mathsf{GL}\rtimes \mathsf{GL})^{\sf e}$-frame.

We can then define a model
$\mathfrak M := (\mathfrak F, \mu^\mathfrak{Q}, e, V)$, where $V=(V^t)_{t\in T^{\mathfrak Q}}$ is the family of valuations determined by
\[
w \in V^t(p) \quad \text{iff} \quad p \in \ell^{\mu^\mathfrak{Q}(t)}(w)
\quad \text{for } p \in \mathrm{Prop}
\] 
for each $t \in T^\mathfrak{Q}$ and $w \in |\mu^\mathfrak{Q}(t)|$.
A straightforward induction on $\varphi \in \langfull$ shows that $\varphi \in \ell^{\mu^\mathfrak{Q}(t)}(w)$ iff $\mathfrak{M}, (t, w) \Vdash \varphi$ for any $t \in T^\mathfrak{Q}$ and $w \in |\mu^\mathfrak{Q}(t)|$, concluding the proof.\qedhere
\end{proof}

By the preceding result, to show completeness of the expandind commutators, it suffices to see that consistent formulas are satisfied in quasimodels. To bridge the gap between consistency and satisfiability, we equip quasimodels with morphisms that, via simulations, associate moments with maximal consistent sets.

\begin{defn}
\label{def:Qmorphism}
Let $\Sigma\Subset\langfull$, let $\mathsf L_h$, $\mathsf L_v$ be unimodal logics, let $\mathcal{M}^{\rm c}$ be the canonical model of the expanding commutator $[{\sf L}_h, {\sf L}_v]^{\sf e}$ and let $\mathfrak{Q}$ be a weak $\Sigma$-quasimodel. A $\mathfrak{Q}$-morphism for $[{\sf L}_h, {\sf L}_v]^{\sf e}$ is a function $\pi^\mathfrak{Q} \colon T^\mathfrak{Q}\to W^{\rm c}$ which is a $\prec_h$-homomorphism and satisfies $\mu^\mathfrak{Q}(t) \rightharpoonup \pi^\mathfrak{Q}(t)$ for any $t\in T^\mathfrak{Q}$.
\end{defn}

The core of our quasimodel extraction is given by the following lemma.

\begin{lem}\label{lemExistsQuasiK4}
Let $\Sigma\Subset\langfull$ and let $\mathcal M^{\rm c}$ be the canonical model of an expanding commutator of the form $[\mathsf L_h,\mathsf{GL}]^{\sf e}$, where $\mathsf{K4}\subseteq \mathsf L_h$.
If $\Phi \in W^{\rm c}$, then there exists a $\Sigma$-quasimodel $\mathfrak{Q}$ and a $\mathfrak{Q}$-morphism $\pi^\mathfrak{Q}$ for $[{\sf L}_h, {\sf GL}]^{\sf e}$ such that $\pi^\mathfrak{Q}(r^\mathfrak{Q}) = \Phi$.
\end{lem}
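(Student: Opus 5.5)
We build $\mathfrak Q$ as the union of an increasing chain of finite weak $\Sigma$-quasimodels $\mathfrak Q_0\subseteq\mathfrak Q_1\subseteq\cdots$, each with a $\mathfrak Q_n$-morphism $\pi_n$ extending the previous one. Each stage adds fresh leaves that realize the horizontal defects of earlier nodes.

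\textbf{Base stage.} By Lemma~\ref{itMomWitNow} there is a $\Sigma$-moment $\mathfrak m$ with $\mathfrak m\rightharpoonup\Phi$. Let $\mathfrak Q_0$ be the one-node tree $\{r\}$ with $\mu(r):=\mathfrak m$, and set $\pi_0(r):=\Phi$.

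\textbf{Inductive stage.} Suppose $\mathfrak Q_n$ and $\pi_n$ are given, and let $t$ be a node added at stage $n$. The moment $\mu(t)$ is finite and $\Sigma$ is finite, so $\mu(t)$ has finitely many horizontal defects. For each such defect $\delta$ of $\mu(t)$ we apply Lemma~\ref{itMomWitLater} to $\mu(t)\rightharpoonup\pi_n(t)$. This yields $\Phi_\delta\succ_h^{\rm c}\pi_n(t)$ and a $\Sigma$-moment $\mathfrak m_\delta$ that realizes $\delta$ and satisfies $\mathfrak m_\delta\rightharpoonup\Phi_\delta$. We add a fresh immediate successor $s_\delta$ of $t$ and set $\mu(s_\delta):=\mathfrak m_\delta$ and $\pi_{n+1}(s_\delta):=\Phi_\delta$. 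We then close $\hr$ transitively, so that every ancestor of $t$ also precedes $s_\delta$.

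In the limit $\mathfrak Q:=\bigcup_n\mathfrak Q_n$, every node $t$ has all the defects of $\mu(t)$ realized by immediate successors. Hence $\mathfrak Q$ is a $\Sigma$-quasimodel, provided it is a weak quasimodel and $\pi:=\bigcup_n\pi_n$ is a $\mathfrak Q$-morphism.

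The tree is transitive and irreflexive by construction. The condition $\mu(t)\rightharpoonup\pi(t)$ holds at every node by construction. Since $\prec_h^{\rm c}$ is transitive (Proposition~\ref{Canonicalmodel$[K4,K4]^e$}\eqref{itTrans}, using $\mathsf K4\subseteq\mathsf L_h$), $\pi$ is a $\prec_h$-homomorphism. What remains, and what I expect to be the main point requiring care, is that $\mu$ is a $\prec_h$-homomorphism along the transitive closure, not just along immediate edges.

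\textbf{Composition argument.} Suppose $\mathfrak m\hr^\Sigma\mathfrak n$ via an embedding $e$ and $\mathfrak n\hr^\Sigma\mathfrak k$ via an embedding $e'$. Then $e'\circ e$ is again an embedding. The label condition also composes. If $\psi$ or $\Diamond_h\psi$ lies in $\ell^{\mathfrak k}(e'e(w))$, then $\Diamond_h\psi\in\ell^{\mathfrak n}(e(w))$. Applying the definition of $\hr^\Sigma$ once more, this time to the formula $\Diamond_h\psi$ itself, gives $\Diamond_h\psi\in\ell^{\mathfrak m}(w)$. So $\hr^\Sigma$ on moments is transitive.

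Since $\hr$ on $\mathfrak Q$ is the transitive closure of the immediate-successor edges, each of which is witnessed by Lemma~\ref{itMomWitLater}, $\mu$ is a homomorphism. Fixing the composed embeddings along branches also yields the coherence condition $e_{yz}\circ e_{xy}=e_{xz}$ needed later in Lemma~\ref{lemQuasiToMod}. Finally, $\pi(r^{\mathfrak Q})=\Phi$ by the base stage, which completes the proof.
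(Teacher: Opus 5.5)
Your proposal is correct and follows essentially the same construction as the paper: an increasing chain of weak $\Sigma$-quasimodels is seeded by Lemma~\ref{itMomWitNow}, a fresh $\hr$-successor is added for each horizontal defect via Lemma~\ref{itMomWitLater}, and the union is taken as the quasimodel. Your explicit checks that $\hr^\Sigma$ composes and that $\prec_h^{\rm c}$ is transitive (using $\mathsf{K4}\subseteq\mathsf L_h$), so that $\mu$ and $\pi$ are homomorphisms along the transitive closure, simply fill in what the paper compresses into ``by construction''.
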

\begin{proof} 
We define a sequence $\{(\mathfrak{Q_i}, \pi^{i})\}_{i\in \mathbb{N}}$, where each $\mathfrak{Q}_i$ is a weak $\Sigma$-quasimodel and $\pi^{i}$ is a $\mathfrak{Q}_i$-morphism for $[{\sf L}_h, {\sf GL}]^{\sf e}$.

First, let $\mathfrak Q_0 := (\{r^0\}, \emptyset, \mu^0)$ consisting of a single point such that $\pi^0(r^0) := \Phi$, and choose $\mu^0(r^0)$ so that it satisfies $\mu^0(r^0) \rightharpoonup \Phi$ according to Lemma~\ref{itMomWitNow}.

Now assume that $\mathfrak Q_i$ and $\pi^{i}$ have been defined. For every horizontal defect $\delta$ of some $\mu^{i}(t)$ with $t \in T^{i}$ that is not realized by any moment of $\mathfrak{Q}_i$, proceed as follows. Add a fresh successor $s \succ^{i+1}_h t$. Choose $\Phi' \succ_h^{\rm c} \pi^i(t)$ and a $\Sigma$-moment $\mathfrak{m}'$ as provided by Lemma~\ref{itMomWitLater}. Set $\pi^{i+1}(s):=\Phi'$ and $\mu^{i+1}(s) := \mathfrak{m}'$.

Let $(\mathfrak Q_\infty, \pi^\infty)$ be the direct limit of $\{(\mathfrak Q_i, \pi^{i})\}_{i \in \mathbb{N}}$.
By construction, $\mathfrak Q_\infty$ is a weak $\Sigma$-quasimodel satisfying $\pi^{\infty}(r^{\infty}) = \Phi$. Moreover, it is a $\Sigma$-quasimodel since every horizontal defect of its moments is realized. \qedhere
\end{proof}

\begin{thm} \label{CompletenessK4GL}
The logic $[\sf{K4},\sf{GL}]^{\sf e}$ is sound and complete with respect to the following classes:
\begin{enumerate}
\item the class of expanding domain frames $(\sf{K4}\times \sf{GL})^{\sf e}$;
\item the class of embedding domain frames $(\sf{K4}\rtimes \sf{GL})^{\sf e}$;
\item the class of forward-confluent frames $\sf{K4}\rtimes \sf{GL}$.
\end{enumerate}
\end{thm}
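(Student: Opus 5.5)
Soundness for all three classes is already in hand. For the forward-confluent class $\sf K4\rtimes \sf GL$ it is Proposition~\ref{SoundFor-Conflu}. For $({\sf K4}\times{\sf GL})^{\sf e}$ and $({\sf K4}\rtimes{\sf GL})^{\sf e}$ it is Corollary~\ref{SoundFor-expan&emdedd}, applied with ${\sf L}_h={\sf K4}$. So the work lies in completeness. The plan is to prove completeness for the narrowest class, namely expanding domain frames, and transfer it to the broader ones by soundness and the inclusions between classes. The transfer works because every expanding domain model yields an embedding domain model (Lemma~\ref{EquiEmbe&Expan}, item 1) and a forward-confluent ${\sf K4}\rtimes{\sf GL}$-model (Lemma~\ref{ExtoFCModel}). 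Both transfers preserve satisfaction, and the relevant frame classes ($\sf K4$ and $\sf GL$ frames) are closed under disjoint unions. Hence any formula satisfiable over $({\sf K4}\times{\sf GL})^{\sf e}$ is satisfiable in the other two classes, and completeness propagates.

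For completeness with respect to $({\sf K4}\times{\sf GL})^{\sf e}$, suppose $\not\vdash_{[{\sf K4},{\sf GL}]^{\sf e}}\varphi$. Let $\Sigma$ be the closure of $\{\neg\varphi\}$ under subformulas and single negations; it is finite and $\Sigma\Subset\langfull$. By Lindenbaum's lemma there is $\Phi\in W^{\rm c}$ with $\neg\varphi\in\Phi$, where $W^{\rm c}$ is the canonical model of $[{\sf K4},{\sf GL}]^{\sf e}$. Lemma~\ref{lemExistsQuasiK4} (with ${\sf L}_h={\sf K4}$) then gives a $\Sigma$-quasimodel $\mathfrak Q$ and a $\mathfrak Q$-morphism $\pi^{\mathfrak Q}$ with $\pi^{\mathfrak Q}(r^{\mathfrak Q})=\Phi$. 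In particular $\mu^{\mathfrak Q}(r^{\mathfrak Q})\rightharpoonup\Phi$, so the root $w$ of that moment has label $\Phi\cap\Sigma$, which contains $\neg\varphi$. Thus $\neg\varphi$ is satisfied in $\mathfrak Q$.

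Lemma~\ref{lemQuasiToMod} now yields a $({\sf K4}\rtimes{\sf GL})^{\sf e}$-model satisfying $\neg\varphi$. Its horizontal frame is $(T^{\mathfrak Q},\prec_h^{\mathfrak Q})$, which is a tree by the definition of a quasimodel. Lemma~\ref{EquiEmbe&Expan}, item 2, therefore converts it into an expanding domain model in $({\sf K4}\times{\sf GL})^{\sf e}$ still satisfying $\neg\varphi$. For this I need to check that the quotient construction keeps each vertical frame a $\sf GL$ frame and the horizontal frame a $\sf K4$ frame. Both hold: the horizontal frame is unchanged, and each $\widehat f(x)$ is isomorphic to $f(x)$, because the relation $\sim$ identifies no two distinct points inside a single $W^x$. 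Hence $\varphi$ is not valid over $({\sf K4}\times{\sf GL})^{\sf e}$.

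The main obstacle is hidden in Lemma~\ref{lemQuasiToMod}, which I take as given: the truth lemma relating labels to forcing in the induced $\sf e$-model. Its horizontal case needs two facts. First, every $\Diamond_h\psi$ in a label is realized at a strict successor, which the quasimodel condition supplies. Second, for the converse, $\prec_h^\Sigma$ between labels transfers $\psi$ back to $\Diamond_h\psi$. Its vertical case uses that moments witness vertical diamonds and that labels form a $\prec_v^\Sigma$-homomorphism. Beyond invoking these lemmas, the only point needing care is that the vertical frames of the moments are finite strict partial orders, hence conversely well-founded $\sf GL$ frames. The horizontal tree, being transitive, is a $\sf K4$ frame even if infinite; this is exactly why the argument gives $\sf K4$ rather than $\sf GL$ horizontally.
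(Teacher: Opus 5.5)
Your proposal is correct and follows essentially the paper's proof. Soundness comes from Proposition~\ref{SoundFor-Conflu} and Corollary~\ref{SoundFor-expan&emdedd}. Completeness goes via Lindenbaum, Lemma~\ref{lemExistsQuasiK4} and Lemma~\ref{lemQuasiToMod}, with Lemmas~\ref{EquiEmbe&Expan} and~\ref{ExtoFCModel} transferring the countermodel between the three classes. The differences are cosmetic: you pass through the expanding domain model before returning to the other two classes, you close $\Sigma$ under single negations (which the paper's ``set of subformulas'' glosses over), and you check that the quotient preserves each vertical frame.
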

\begin{proof}
The soundness with respect to these classes follows from the soundness of the logics $\sf K4$ and $\mathsf{GL}$, together with Proposition~\ref{SoundFor-Conflu} and Corollary~\ref{SoundFor-expan&emdedd}.

For completeness, let $\varphi$ be any consistent formula of $\langfull$. Then $\varphi \in \Phi$ for some $\Phi \in W^{\mathrm c}$, by the Lindenbaum's lemma. Let $\Sigma$ be the set of subformulas of $\varphi$. By Lemma~\ref{lemExistsQuasiK4}, there exists a $\Sigma$-quasimodel $\mathfrak{Q}$ and a $\mathfrak{Q}$-morphism $\pi^\mathfrak{Q}$ such that $\pi^\mathfrak{Q}(r^\mathfrak{Q}) = \Phi$. Let $\mathfrak{m} := \mu^\mathfrak{Q}(r^\mathfrak{Q})$. Since $\mathfrak{m} \rightharpoonup \Phi$, it follows from the definition of a simulation  that $\ell^\mathfrak{m}(r^\mathfrak{m}) = \Phi \cap \Sigma$. As $\varphi \in \Sigma$ and $\varphi \in \Phi$, we obtain $\varphi \in \ell^\mathfrak{m}(r^\mathfrak{m})$, that is, $\varphi$ is satisfiable in $\mathfrak Q$. Thus, by Lemma~\ref{lemQuasiToMod}, $\varphi$ is satisfied in an embedding domain model. The completeness with respect to the expanding domain frames and the forward-confluent frames is obtained from Lemma~\ref{EquiEmbe&Expan} and Lemma~\ref{ExtoFCModel}, respectively.
\end{proof}

\section{Yankov-Fine formulas}\label{Sec:Char}

Next we turn our attention to $[\mathsf{GL},\mathsf{GL}]^{\sf e}$. The work we have done so far already represents a considerable portion of our completeness proof, but a crucial extra step is required that was not needed for $[\mathsf{K4},\mathsf{GL}]^{\sf e}$.
The issue here is that $\prec^{\rm c}_h$ is not conversely well-founded, and hence the quasimodel extraction method may in principle yield quasimodels with infinite branches. However, {\em definable} subsets of the canonical model do have terminal elements due to the Löb axiom, and we can exploit this property to avoid infinite branches. The specific properties we need to define are given by a variant of Yankov-Fine formulas \cite{yankov1963relation, Fine1974}.

Let $\Sigma\Subset\langfull$. For a $\Sigma$-moment $\mathfrak m$ and $w\in |\mathfrak m|$, the {\em height} of $w$ is the maximal $N \in \mathbb{N}$ such that there exists a sequence $w = w_0  \vr^\mathfrak m w_1  \vr^\mathfrak m\dots  \vr^\mathfrak m w_N$ in $|\mathfrak m|$. The height of $\mathfrak m$ is defined as the height of its root $r^{\mathfrak m}$. The formula ${\rm Sim}(\mathfrak{m})$ is defined by induction on the height of $\mathfrak m$ as follows:
  \[
  {\rm Sim}(\mathfrak{m}) := \bigwedge \ell^\mathfrak{m}(r^\mathfrak{m}) \wedge \bigwedge_{r^\mathfrak{m} \vr^\mathfrak m r'} \Diamond_v {\rm Sim}(\mathfrak{m}\!\upharpoonright_{r'})
  \]
where $\mathfrak{m}\!\upharpoonright_{r'}$ denote the subtree of $\mathfrak{m}$ rooted at $r' \succ_v^\mathfrak{m} r^\mathfrak{m}$, i.e., the restriction of $\mathfrak{m}$ to the set $\{v \mid r' \peq_v^\mathfrak m v\}$ for $\peq_v^\mathfrak m$ the reflexive closure of $\prec_v^\mathfrak{m}$. The following fact is a special case of Theorem~7.1 in~\cite{Fernandez11}.

\begin{thm}\label{thmYF}
Let $\Sigma\Subset\langfull$, let $\mathsf L_h$, $\mathsf L_v$ be unimodal logics such that ${\sf L}_v \supseteq {\sf GL}$ and let $\mathcal M^{\rm c}$ be the canonical model for $[ \mathsf L_h, \mathsf L_v]^{\sf e}$. For any $\Sigma$-moment $\mathfrak m$ and any $\Phi \in W^{\rm c}$, the following are equivalent.
\begin{enumerate}
    \item\label{YFsim} $\mathfrak{m}\rightharpoonup \Phi$.
    \item\label{YFSimSat} $\Phi \in \lb{\rm Sim}(\mathfrak m)\rb^{\rm c}$.
    \item\label{YFSimTruthLemma} ${\rm Sim}(\mathfrak{m}) \in \Phi$. 
\end{enumerate}
\end{thm}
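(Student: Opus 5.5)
The equivalence of \eqref{YFSimSat} and \eqref{YFSimTruthLemma} is immediate from the truth lemma (Proposition~\ref{Canonicalmodel$[K4,K4]^e$}\eqref{TruthLemma}), since ${\rm Sim}(\mathfrak m)$ is a formula of $\langfull$ and $\lb\cdot\rb^{\rm c}$ is its truth set in $\mathcal M^{\rm c}$. It therefore suffices to prove \eqref{YFsim}$\Leftrightarrow$\eqref{YFSimTruthLemma}. We argue by induction on the height of $\mathfrak m$, proving the statement simultaneously for all $\Phi\in W^{\rm c}$ and for all subtrees $\mathfrak m\!\upharpoonright_{r'}$. Note that these subtrees are again $\Sigma$-moments, because every $\prec_v$-successor of a node of $\mathfrak m\!\upharpoonright_{r'}$ lies in $\mathfrak m\!\upharpoonright_{r'}$.

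\emph{From \eqref{YFsim} to \eqref{YFSimTruthLemma}.} Let $\sigma$ be a simulation with $\sigma(r^\mathfrak m)=\Phi$. Then $\ell^\mathfrak m(r^\mathfrak m)=\Phi\cap\Sigma\subseteq\Phi$, so the first conjunct of ${\rm Sim}(\mathfrak m)$ lies in $\Phi$. For each immediate successor $r'$ of the root, $\sigma$ restricted to $\mathfrak m\!\upharpoonright_{r'}$ is a simulation onto $\sigma(r')$. By the induction hypothesis, ${\rm Sim}(\mathfrak m\!\upharpoonright_{r'})\in\sigma(r')$. Since $\Phi\prec_v^{\rm c}\sigma(r')$, we obtain $\Diamond_v{\rm Sim}(\mathfrak m\!\upharpoonright_{r'})\in\Phi$. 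As $\Phi$ is maximal consistent, the whole conjunction belongs to $\Phi$. This direction uses no property of $\sf GL$.

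\emph{From \eqref{YFSimTruthLemma} to \eqref{YFsim}.} First, $\bigwedge\ell^\mathfrak m(r^\mathfrak m)\in\Phi$ gives $\ell^\mathfrak m(r^\mathfrak m)\subseteq\Phi\cap\Sigma$. Equality follows because $\ell^\mathfrak m(r^\mathfrak m)$ is a $\Sigma$-type: for each $\neg\varphi\in\Sigma$, exactly one of $\varphi,\neg\varphi$ lies in the type, and $\Phi$ cannot contain both. Any $\psi\in\Sigma\setminus\ell^\mathfrak m(r^\mathfrak m)$ is either of the form $\neg\varphi$ with $\varphi$ in the type, or has its negation $\neg\psi\in\Sigma$ in the type; in both cases $\psi\notin\Phi$. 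Double negations need the usual care with ``single negations''. Next, for each immediate successor $r'$, the formula $\Diamond_v{\rm Sim}(\mathfrak m\!\upharpoonright_{r'})$ lies in $\Phi$, so by the witness property (Lemma~\ref{WitnessCanonicalModel}) there is $\Psi_{r'}\succ_v^{\rm c}\Phi$ with ${\rm Sim}(\mathfrak m\!\upharpoonright_{r'})\in\Psi_{r'}$. By the induction hypothesis, there is a simulation $\sigma_{r'}$ of $\mathfrak m\!\upharpoonright_{r'}$ with root value $\Psi_{r'}$.

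Gluing these maps is the main obstacle. Define $\sigma(r^\mathfrak m)=\Phi$ and let $\sigma$ agree with $\sigma_{r'}$ on each subtree. Since $\mathfrak m$ is a tree, the subtrees rooted at distinct immediate successors are disjoint, so $\sigma$ is well defined. The labelling condition holds on every node. For the homomorphism condition, it remains to check $\Phi\prec_v^{\rm c}\sigma(u)$ for every non-root $u$. Such a $u$ lies in some subtree, with $r'\preceq_v u$, so $\sigma_{r'}$ gives $\Psi_{r'}\prec_v^{\rm c}\sigma(u)$ (or equality when $u=r'$). Transitivity of $\prec_v^{\rm c}$ (Proposition~\ref{Canonicalmodel$[K4,K4]^e$}\eqref{itTrans}, using ${\sf GL}\supseteq{\sf K4}$) then yields $\Phi\prec_v^{\rm c}\sigma(u)$.

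Thus only the transitivity of $\sf GL$ is actually used, not the Löb axiom, and I would double-check whether the moment condition on $\mathfrak m$ plays any role; it appears to be needed only for closure of subtrees. The one genuine subtlety is the type argument giving $\ell^\mathfrak m(r^\mathfrak m)=\Phi\cap\Sigma$.
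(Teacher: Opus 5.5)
Your proof is correct and follows the paper's argument. Both obtain (2)$\Leftrightarrow$(3) from the truth lemma and then argue by induction on height: one direction restricts the simulation and applies $\Diamond_v$, the other uses the witness lemma and the induction hypothesis and glues the resulting simulations. You are somewhat more careful than the paper in three places: your case split in the type argument covers $\varphi$ that are themselves negations, you glue only along immediate successors so that $\sigma$ is well defined, and you verify $\Phi\prec_v^{\rm c}\sigma(u)$ explicitly via transitivity. One small correction: ${\rm Sim}(\mathfrak m)$ as defined conjoins over all $r'\succ_v^{\mathfrak m} r^{\mathfrak m}$, not only immediate ones, but your forward argument applies verbatim to those extra conjuncts.
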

\begin{proof}
By the truth lemma (Theorem \ref{Canonicalmodel$[K4,K4]^e$}\eqref{TruthLemma}), the statements in (\ref{YFSimSat}) and (\ref{YFSimTruthLemma}) are equivalent. Hence, it suffices to see the $\mathfrak{m}\rightharpoonup \Phi $ iff ${\rm Sim}(\mathfrak{m}) \in \Phi$ by induction on the height of $\mathfrak{m}$. 
\begin{enumerate}[wide, labelwidth=!, labelindent=5pt, itemsep=0pt] 
\item[$(\Rightarrow)$]
The base case is immediate since $\mathfrak{m}$ is simply the node $r^\mathfrak{m}$ and we clearly have $\bigwedge \ell^\mathfrak{m}(r^\mathfrak{m}) \in \Phi$ by definition. For the inductive step, it suffices to show that $\Diamond_v \mathrm{Sim}(\mathfrak{m}\!\upharpoonright_{r'}) \in \Phi$ for every $r' \succ_v^{\mathfrak{m}} r^{\mathfrak{m}}$, so using the argument for the base case we conclude that $\mathrm{Sim}(\mathfrak{m}) \in \Phi$. Hence, let $\sigma : |\mathfrak{m}| \to W^{\rm c}$ be the simulation for which $\sigma(r^\mathfrak{m}) = \Phi$, and consider $r' \succ_v^{\mathfrak{m}} r^{\mathfrak{m}}$. By the definition of simulation, we have that $\sigma(r') \succ_v^{\rm c} \Phi$. Moreover, the restriction of $\sigma$ to $\mathfrak{m}\!\upharpoonright_{r'}$ is again a simulation so, by the inductive hypothesis, we obtain ${\rm Sim}(\mathfrak{m}\!\upharpoonright_{r'}) \in \sigma(r')$. Thus, from $\sigma(r') \succ_v^{\rm c} \Phi$ we conclude $\Diamond_v\, {\rm Sim}(\mathfrak{m}\!\upharpoonright_{r'}) \in \Phi$.

 \item[$(\Leftarrow)$] The base case is satisfied by defining the simulation $\sigma : |\mathfrak{m}| \to W^{\rm c}$ as $\sigma(r^\mathfrak{m}) := \Phi$ given that $\mathfrak{m}$ is simply a single node $r^\mathfrak{m}$. 
 It now suffices to show $\ell^\mathfrak{m}(r^\mathfrak{m}) = \Phi \cap \Sigma$ assuming that $\bigwedge \ell^\mathfrak{m}(r^\mathfrak{m}) \in \Phi$. The inclusion $\subseteq$ holds trivially by definition of $\ell^\mathfrak{m}$ and the maximality of $\Phi$. For the reverse inclusion, let $\varphi \in \Phi \cap \Sigma$ and suppose towards a contradiction that $\varphi \notin \ell^\mathfrak{m}(r^\mathfrak{m})$. Since $\Sigma$ is closed under single negations, we have that $\neg \varphi \in \Sigma$. Therefore, since $\ell^\mathfrak{m}(r^\mathfrak{m})$ is a finite $\Sigma$-type, we obtain $\neg \varphi \in \ell^\mathfrak{m}(r^\mathfrak{m})$. From $\bigwedge \ell^\mathfrak{m}(r^\mathfrak{m}) \in \Phi$ it then follows that $\neg \varphi \in \Phi$, that is, $\varphi \notin \Phi$, which contradicts the assumption.
 
 For the inductive step, assume that $\mathrm{Sim}(\mathfrak{m}) \in \Phi$. Then, for any $r' \succ_v^\mathfrak{m} r^\mathfrak{m}$ we have that $\Diamond_v \mathrm{Sim}(\mathfrak{m}\!\upharpoonright_{r'}) \in \Phi$. By the witnessing Lemma \ref{WitnessCanonicalModel}, there is some $\Phi' \succ_v^{\rm c} \Phi$ such that $\mathrm{Sim}(\mathfrak{m}\!\upharpoonright_{r'}) \in \Phi'$. Therefore, by the inductive hypothesis there is some simulation $\sigma_{r'} : |\mathfrak{m}\!\upharpoonright_{r'}| \to W^{\rm c}$ for which $\mathfrak{m}\!\upharpoonright_{r'} \rightharpoonup \Phi'$. If we define $\sigma : |\mathfrak{m}| \to W^{\rm c}$ by setting $\sigma(r^{\mathfrak{m}}) := \Phi$ and $\sigma(w) := \sigma_{r'}(w)$ for $w$ reachable from some $r' \succ_v^\mathfrak{m} r^\mathfrak{m}$, it is easy to conclude that $\sigma : |\mathfrak{m}| \to W^{\rm c}$ is a simulation satisfying $\mathfrak{m} \rightharpoonup \Phi$; details are left to the reader. \qedhere
\end{enumerate}
\end{proof}

With these formulas at hand, we are ready to prove a version of Lemma \ref{lemExistsQuasiK4} for $[\mathsf{GL},\mathsf{GL}]^{\sf e}$, the last ingredient in our proof.

\begin{lem}\label{lemExistsQuasi}
Let $\Sigma\Subset\langfull$ and let $\mathcal{M}^{\rm c}$ be the canonical model of the expanding commutator $\sf [GL, GL]^{e}$. 
If $\Phi \in W^{\rm c}$, then there exists a finite $\Sigma$-quasimodel $\mathfrak{Q}$ and a $\mathfrak{Q}$-morphism $\pi^\mathfrak{Q}$ for $\sf [GL, GL]^{e}$ such that $\pi^\mathfrak{Q}(r^\mathfrak{Q}) = \Phi$.
\end{lem}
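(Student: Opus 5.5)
We follow the construction of Lemma~\ref{lemExistsQuasiK4}, but whenever a horizontal successor is created we choose it to be $\prec_h^{\rm c}$-maximal among maximal consistent sets simulated by the chosen moment. This choice is made definable by Theorem~\ref{thmYF}. Concretely, we build the sequence $\{(\mathfrak Q_i,\pi^i)\}_{i\in\mathbb N}$ of weak $\Sigma$-quasimodels with $\mathfrak Q_i$-morphisms, maintaining the invariant
\[
{\rm Sim}(\mu^i(t))\wedge\neg\Diamond_h{\rm Sim}(\mu^i(t))\in\pi^i(t)\quad\text{for all } t\in T^i .
\]

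For the root, Lemma~\ref{itMomWitNow} gives a moment $\mathfrak m$ with $\mathfrak m\rightharpoonup\Phi$, but we cannot also demand $\neg\Diamond_h{\rm Sim}(\mathfrak m)\in\Phi$ while keeping $\pi(r)=\Phi$. So the root is treated separately: the invariant is required only at non-root nodes. This costs at most one extra level and does not affect finiteness. For a non-root step, let $\delta$ be an unrealized horizontal defect of $\mu^i(t)$. Lemma~\ref{itMomWitLater} yields $\Phi''\succ_h^{\rm c}\pi^i(t)$ and a moment $\mathfrak m'$ realizing $\delta$ with $\mathfrak m'\rightharpoonup\Phi''$. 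By Theorem~\ref{thmYF}, ${\rm Sim}(\mathfrak m')\in\Phi''$, hence $\Diamond_h{\rm Sim}(\mathfrak m')\in\pi^i(t)$. The Löb clause of Lemma~\ref{WitnessCanonicalModel} then gives $\Phi'\succ_h^{\rm c}\pi^i(t)$ containing ${\rm Sim}(\mathfrak m')\wedge\neg\Diamond_h{\rm Sim}(\mathfrak m')$. By Theorem~\ref{thmYF} again, $\mathfrak m'\rightharpoonup\Phi'$. Realization of $\delta$ is a property of $\mathfrak m'$ relative to $\mu^i(t)$, not of $\Phi'$, so we may set $\pi^{i+1}(s):=\Phi'$ and $\mu^{i+1}(s):=\mathfrak m'$ for a fresh $s\succ_h t$. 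The $\prec_h$-homomorphism property of $\pi$ follows by transitivity of $\prec_h^{\rm c}$. The homomorphism property of $\mu$ ($\mu(t)\prec_h^\Sigma\mu(s)$ for all $t\prec_h s$) follows by composing embeddings and using transitivity of $\prec_h^\Sigma$ on types.

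The key claim is that no branch is infinite. Suppose $t_1\prec_h t_2\prec_h\cdots$ is an infinite branch of non-root nodes in the limit. By Corollary~\ref{thmWQO} there are $i<j$ with $\mu(t_i)\sqsubseteq^\Sigma\mu(t_j)$. A root- and label-preserving embedding composed with a simulation is a simulation, so $\mu(t_i)\rightharpoonup\pi(t_j)$. Hence ${\rm Sim}(\mu(t_i))\in\pi(t_j)$, and since $\pi(t_i)\prec_h^{\rm c}\pi(t_j)$ we get $\Diamond_h{\rm Sim}(\mu(t_i))\in\pi(t_i)$, contradicting the invariant.

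Since each node has only finitely many horizontal defects ($\Sigma$ and moments are finite), the tree is finitely branching. König's lemma then shows that the direct limit $\mathfrak Q_\infty$ is finite. This means the construction stabilizes, yielding a finite $\Sigma$-quasimodel in which every defect is realized, with $\pi(r)=\Phi$.

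The main obstacle is checking that the Kruskal embedding $\sqsubseteq^\Sigma$ transfers simulations. A root- and label-preserving embedding into a moment, composed with a simulation, must remain a $\prec_v$-homomorphism with correct labels, which is clear. Thus the remaining subtlety is only the root exception.
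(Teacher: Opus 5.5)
Your proposal is correct and follows the paper's proof. At each new node you use the Löb clause of the witness lemma to choose $\Phi'$ containing ${\rm Sim}(\mathfrak m')\wedge\neg\hd{\rm Sim}(\mathfrak m')$, and then rule out infinite branches with König's lemma plus the well-quasi-order of moments and Theorem~\ref{thmYF}. Exempting the root from the invariant and applying Kruskal only to the non-root tail of a branch also tidies up a point the paper's proof passes over, since the construction does not guarantee $\neg\hd{\rm Sim}(\mu(r))\in\Phi$.
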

\begin{proof} 
The proof is almost identical to that of Lemma~\ref{lemExistsQuasiK4}:
we define a sequence $\{(\mathfrak{Q_i}, \pi^{i})\}_{i\in \mathbb{N}}$ where each $\mathfrak{Q}_i$ is a weak $\Sigma$-quasimodel and $\pi^{i}$ is a $\mathfrak{Q}_i$-morphism for $\sf [GL,GL]^{e}$. As before, $\mathfrak Q_0$ is a weak $\Sigma$-quasimodel consisting of a single point $r^0$ such that $\pi^0(r^0):=\Phi$ and $\mu^0(r^0)$ is chosen to satisfy $\mu^0(r^0) \rightharpoonup \Phi$ according to Lemma~\ref{itMomWitNow}.

The key difference is in the inductive step.
Assume that $\mathfrak Q_i$ and $\pi^{i}$ have been defined.
For every horizontal defect $\delta$ of some $\mu^{i}(t)$ with $t \in T^{i}$ that is not realized by any moment of $\mathfrak{Q}_i$, add a fresh successor $s \succ_h^{i+1} t$.
Choose $\Phi' \succ_h^{\rm c} \pi^{i}(t)$ and a $\Sigma$-moment $\mathfrak m'$ as provided by Lemma~\ref{itMomWitLater}.
In the proof of Lemma~\ref{lemExistsQuasiK4}, we would have set $\pi^{i+1}(s) = \Phi$; however, this could lead to an ill-founded quasimodel.

This is where the $\sf GL$ axiom and our simulation formulas come into play.
By Theorem~\ref{thmYF}, we have ${\rm Sim}(\mathfrak m')\in \Phi'$. Hence, by the definition of $\prec_h^{\rm c}$, it follows that $\hd\,{\rm Sim}(\mathfrak m')\in \pi^{i}(t)$.
Using the canonical witness property (Lemma~\ref{WitnessCanonicalModel}), there exists $\Psi' \,\rh^{\rm c}\, \pi^{i}(t)$ such that
\[
{\rm Sim}(\mathfrak m')\wedge \neg \hd\,{\rm Sim}(\mathfrak m')\in \Psi'.
\]
Set $\pi^{i+1}(s):=\Psi'$ and $\mu^{i+1}(s) := \mathfrak{m}'$. Then again by Theorem~\ref{thmYF}, we have $\mathfrak m' \rightharpoonup \Psi'$.

We claim that the sequence stabilizes at some $i \in \mathbb{N}$ satisfying the statement, that is, $\mathfrak Q_i$ is a finite $\Sigma$-quasimodel where the horizontal defects of its moments are all realized. Towards a contradiction, suppose not. Then, the sequence $\{(\mathfrak Q_i, \pi^{i})\}_{i \in \mathbb{N}}$ is increasing, with $\mathfrak Q_i \subseteq \mathfrak Q_{i+1}$, and uniformly finitely bounded. Let $\mathfrak Q_\infty := \bigcup_{i\in\mathbb N} \mathfrak Q_i$ and $\pi^\infty := \bigcup_{i\in\mathbb N} \pi^{i}$. Then, $\mathfrak{Q}^\infty$ defines an infinite $\Sigma$-quasimodel.
By K\"onig's lemma, there exists an infinite branch $t_0 \hr^\infty t_1 \hr^\infty t_2 \hr^\infty \cdots$ in $\mathfrak Q_\infty$. By Kruskal’s Theorem~\ref{thmKruskal}, there exist $i < j$ such that $\mu^\infty(t_i) \sqsubseteq^\Sigma \mu^\infty(t_j)$. By Theorem~\ref{thmYF}, it follows that ${\rm Sim}(\mu^\infty(t_i))\in \pi^\infty(t_j)$, contradicting $\neg \hd\,{\rm Sim}(\mu^\infty({t_i}))\in \pi^\infty(t_i)$ which is given by construction.
\end{proof}

\begin{thm} \label{Completeness}
The logic $[\sf{GL},\sf{GL}]^{\sf e}$ is sound and complete with respect to the following classes:
\begin{enumerate}
\item the class of (finite) expanding domain frames $(\sf{GL}\times \sf{GL})^{\sf e}$;
\item the class of (finite) embedding domain frames $(\sf{GL}\rtimes \sf{GL})^{\sf e}$;
\item the class of (finite) forward-confluent frames $\sf{GL}\rtimes \sf{GL}$.
\end{enumerate}
\end{thm}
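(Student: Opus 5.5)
The plan mirrors the proof of Theorem~\ref{CompletenessK4GL}, with Lemma~\ref{lemExistsQuasi} in place of Lemma~\ref{lemExistsQuasiK4}.

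\emph{Soundness.} For the embedding and expanding domain classes, soundness is Corollary~\ref{SoundFor-expan&emdedd} with ${\sf L}_h={\sf GL}$. For the forward-confluent class ${\sf GL}\rtimes{\sf GL}$, it is Proposition~\ref{SoundFor-Conflu}, since both components are ${\sf GL}$-frames. These arguments apply to arbitrary frames in each class, so in particular to finite ones.

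\emph{Completeness.} Let $\varphi$ be consistent in $[{\sf GL},{\sf GL}]^{\sf e}$. By Lindenbaum's lemma we extend it to some $\Phi\in W^{\rm c}$, and we take $\Sigma$ to be the closure of the subformulas of $\varphi$ under single negations, so that $\Sigma\Subset\langfull$ and $\Sigma$ is finite. Lemma~\ref{lemExistsQuasi} gives a \emph{finite} $\Sigma$-quasimodel $\mathfrak Q$ and a $\mathfrak Q$-morphism $\pi^{\mathfrak Q}$ with $\pi^{\mathfrak Q}(r^{\mathfrak Q})=\Phi$.

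Because $\mu^{\mathfrak Q}(r^{\mathfrak Q})\rightharpoonup\Phi$, the root label of this moment equals $\Phi\cap\Sigma$, which contains $\varphi$. So $\varphi$ is satisfied in $\mathfrak Q$. The second clause of Lemma~\ref{lemQuasiToMod} then yields a finite $({\sf GL}\rtimes{\sf GL})^{\sf e}$-model satisfying $\varphi$. Finiteness is preserved because the horizontal frame is the finite tree $T^{\mathfrak Q}$ and each vertical frame is a finite moment.

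\emph{Transfer to the other classes.} Since the horizontal component is a tree, Lemma~\ref{EquiEmbe&Expan}(\ref{2EquiEmbe&Expan}) turns this into an expanding domain model. The quotient construction there keeps the same horizontal frame and only takes quotients of finite vertical frames, so the result is still finite. Lemma~\ref{ExtoFCModel} then gives a forward-confluent ${\sf GL}\rtimes{\sf GL}$-model with underlying set $\{(x,u)\}$, which is again finite. To apply this lemma, we note that finite irreflexive transitive frames are closed under finite disjoint unions; this suffices because only finitely many components occur.

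\emph{Main obstacle.} The delicate point is confirming that each transformation preserves both finiteness and the required frame properties. One check is that the relations in the quotient of Lemma~\ref{EquiEmbe&Expan} remain irreflexive and transitive: this holds because $\widehat{\vr}^x$ is isomorphic to $\vr^x$ via $w\mapsto[w]_\sim$, which is injective on $W^x$ since distinct points of one $W^x$ are never identified along a tree. The other is that the frames in Lemma~\ref{ExtoFCModel} are conversely well-founded, which follows because they are finite, irreflexive and transitive.
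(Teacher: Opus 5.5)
Your proposal is correct and follows the paper's own route. The paper's proof simply repeats the argument of Theorem~\ref{CompletenessK4GL} with Lemma~\ref{lemExistsQuasi} in place of Lemma~\ref{lemExistsQuasiK4}, which is what you do: the finite clause of Lemma~\ref{lemQuasiToMod}, then the tree-quotient transfer and Lemma~\ref{ExtoFCModel}. Your extra checks (closing $\Sigma$ under single negations, and confirming that finiteness, irreflexivity and transitivity survive both transfers) are details the paper leaves implicit, and they are sound.
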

\begin{proof}
Analogous to the proof of Theorem~\ref{Completeness$[K4,K4]^e$}, except that we now use Lemma \ref{lemExistsQuasi}.
\end{proof}

Although already proven by Gabelaia et al.~\cite{pml}, it is instructive to observe that we obtain an alternative proof of decidability.

\begin{thm}
$[\sf{GL},\sf{GL}]^{\sf e}$ is decidable.  
\end{thm}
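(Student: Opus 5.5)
The plan is the classical combination of recursive enumerability of theorems with the finite model property from Theorem~\ref{Completeness}. The logic $[\sf GL,\sf GL]^{\sf e}$ is finitely axiomatised by modus ponens, substitution and necessitation over finitely many axiom schemes: the horizontal and vertical instances of the axioms of $\sf K$, the transitivity and Löb axioms, plus left commutativity and Church--Rosser. Hence its set of theorems is recursively enumerable. It therefore suffices to show that the set of non-theorems is also recursively enumerable.

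By Theorem~\ref{Completeness}, if $\varphi$ is not derivable then $\neg\varphi$ is satisfied in a \emph{finite} forward-confluent $\sf GL\rtimes\sf GL$-model. Such frames are exactly finite triples $(W,\prec_h,\prec_v)$ with both relations transitive and irreflexive, since on finite frames irreflexive transitivity is equivalent to the $\sf GL$ conditions, and satisfying (LC) and (CR). These are first-order conditions checkable by brute force on a finite structure. Conversely, by Proposition~\ref{SoundFor-Conflu}, any such finite model refuting $\varphi$ witnesses non-derivability.

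The procedure thus enumerates all finite bi-relational frames with valuations on the propositional variables of $\varphi$. For each, it checks transitivity, irreflexivity, (LC) and (CR), and evaluates $\varphi$ at every point. Running this search in parallel with a proof search gives a procedure that halts on every input.

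One could alternatively extract an explicit size bound on the countermodel from the quasimodel construction of Lemma~\ref{lemExistsQuasi}. However, Kruskal's theorem makes any such bound non-primitive-recursive and the König/Kruskal argument is non-constructive, so I would avoid it. The main point needing care is that finite forward-confluent $\sf GL\rtimes\sf GL$-frames form a decidable class, which holds because on finite frames $\sf GL$-frames are precisely the strict partial orders.
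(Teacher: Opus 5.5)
Your proof is correct and follows the same route as the paper: finite axiomatisability makes the theorems recursively enumerable, and the finite model property of Theorem~\ref{Completeness} makes the non-theorems recursively enumerable, which is the Harrop-style argument the paper invokes via \cite[Theorem~6.13]{BRV01}. The one thing you add is to make explicit what the paper leaves to that citation: finite $\mathsf{GL}\rtimes\mathsf{GL}$-frames are exactly the finite strict partial orders satisfying (LC) and (CR), so they can be recognised by brute force.
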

\begin{proof}
Since the logic $[\mathsf{GL},\mathsf{GL}]^{\mathsf e}$ is (finitely) axiomatizable and enjoys the finite model property with respect to the classes of models based on each of the frames specified in Theorem~\ref{Completeness}, it follows that it is decidable. For more details, see, e.g., \cite[Theorem~6.13]{BRV01}. 
\end{proof}

\section{Concluding remarks} \label{sec:Concludingremarks}

We have shown that $[{\sf GL},{\sf GL}]^{\sf e}$ is complete for its expanding domain semantics, and that the logic remains complete if the horizontal component is replaced by $\sf K4$ but not if it is replaced by e.g.~$\sf Grz$, $\sf GL.3$ or $\sf K4.3$. This provides a partial solution to the problem posed by Gabelaia et al.~\cite{pml} of determining which expanding commutators are complete when one of the components is Noetherian.

We could likewise vary the vertical component and consider other logics of the form $[{\sf GL},{\sf L}_v]^{\sf e}$. Perhaps counter-intuitively, our conjecture is that in this case, setting ${\sf L}_v \in \{\mathsf{K4},\mathsf{K4.3}\}$ will lead to incompleteness, but setting ${\sf L}_v\in \{{\sf Grz},{\sf GL.3},{\sf Grz.3}\}$ to completeness. The reason for this is that the formulas ${\rm Sim}(\mathfrak m)$ are available over the class of $\sf Grz$ models~\cite{Fernandez11} but usually not for non-Noetherian logics, so our proof strategy could be adapted only in the former case.\footnote{Interestingly, they cannot be defined either over the class of all Noetherian models that are not assumed globally reflexive {\em or} globally irreflexive, i.e., $\sf wGrz$ models, so Noetherianness is not sufficient.} Logics with the $\sf 3$ axiom may need some extra work to retain linearity, but a related logic where the `vertical' component is linear has been treated by Aguilera et al.~\cite{AguileraDFM25} and similar techniques should work. As for incompleteness, the fact that our proof would fail does not prove that the logics are incomplete, but it would mirror the case of the closely-related {\em dynamic topological logic,} which is complete when the `vertical' logic is $\sf GL$~\cite{FDMontacute} but not when it is $\sf S4$~\cite{FDNonFin}. It is possible that e.g.~$[{\sf GL},{\sf S4}]^{\sf e}$ will be incomplete for similar reasons. This leads to two related questions.

\begin{question}\label{quesComp}
Which of the remaining logics proven decidable by Gabelaia et al.~\cite{pml} coincide with their expanding commutator?
\end{question}

\begin{question}
In those cases in which the commutator is incomplete, is the expanding domain logic finitely axiomatisable?
Can a natural axiomatisation be provided?
\end{question}

Note that `natural' here is not meant to have a technical meaning, e.g.~Peano arithmetic is not finitely axiomatisable but one may argue that the axioms are natural.

Another very intriguing line of inquiry is to consider intuitionistic companions of expanding products. We can define a Gödel-Tarski embedding $t$ of the language of intuitionistic modal logic with primitives $\bot,\wedge,\vee,\Rightarrow,\Diamond,\Box$ into the bi-modal language by letting $t$ commute with Booleans and setting
\begin{multicols}2

\begin{itemize}
    \item $t(p) = \boxdot_h p $
    \item $t(\varphi\Rightarrow\psi ) = \boxdot_h (t(\varphi)\to t(\psi)) $
    \item $t(\Diamond \varphi ) = \boxdot_h \vd  t(\varphi) $ 
    \item $t(\Box \varphi ) = \boxdot_h \vbx  t(\varphi) $.
\end{itemize}

\end{multicols}
\noindent 
If $\sf L$ is a bi-modal logic, the set of formulas whose translations are derivable is a well-defined logic, which we denote ${\sf I}(\sf L)$. We may also write ${\sf I}(\mathcal C)$ instead of ${\sf I}(\sf L)$ if $\sf L$ is the logic of the class $\mathcal C$.
As long as $\mathsf L_h$ extends ${\sf K4}$, ${\sf I}(\sf L)$ validates every intuitionistic tautology. Let us make the further convention that when ${\sf L}$ is the expanding commutator of some unimodal logic ${\sf L}_0$ in both vertical and horizontal components, then we write ${\sf IL}_0^{\sf e} $ instead of ${\sf I}({\sf L})$.

Then, $\sf IGL$~\cite{IGL} is ${\sf I}(\mathcal C)$, where $\mathcal C$ is the class of all ${\sf K4} \rtimes {\sf K4}$ frames where $ \peq_h;\vr $ is conversely well-founded. It should be clear that this class extends ${\sf GL} \rtimes {\sf GL}$, and in fact ${\sf IGL}\subsetneq {\sf IGL^e}$, since the formula $\neg\neg \Box (p\vee\neg p)$ belongs only to the right-hand side. To see that it belongs to ${\sf IGL^e}$, just note that $\Box (p\vee\neg p)$ holds on any $\hr$-maximal world. Meanwhile, the model $\mathfrak{M}$ (see \Cref{fig3}) given by $W:=\{0,1,2\}$, $0\vr 1$, $0\vr 2$, $1\hr 2$ and $\val p:= \{2\}$ provides an $\sf IGL$ counter-model.

\begin{figure}[h] 
\centering
\small
\begin{tikzpicture}
	\begin{pgfonlayer}{nodelayer}
		\node [style=sroot, label=left:{}] (1) at (-1, 3.7) {};
		\node [style=sroot, label=above:{}] (2) at (1, 3.7) {$p$};
        \node [style=sroot, label=left:{}] (3) at (0, 2.6) {};

        \draw [style=edgeto] (1) to (2);
		\draw [style=edgeto, red, dashed] (3) to (1);
        \draw [style=edgeto, red, dashed] (3) to (2);
	\end{pgfonlayer}
\end{tikzpicture}
\caption{The model $\mathfrak{M}$.}
\label{fig3}
\end{figure}

The Noetherianness of $\hr$ can be reflected more explicitly in the logic if we enrich the intuitionistic language with an additional modality $\triangle$, interpreted simply as $\hbx$. Such a modality has been studied by Kuznetsov and Muravitsky, who showed that the Noetherianness of $\hr$ can be characterised by the validity of $(\triangle p\to p)\to p$~\cite{KuznetsovM86}.
Let us denote the set of formulas of this extended language valid over ${\sf GL} \rtimes {\sf GL}$ by $\sf KMGL$.
Then, $\sf KMGL$ is a (non-conservative) extension of $\sf IGL$ which embeds into a finitely axiomatisable bi-modal logic with the finite model property. However, we do not know of a `native' axiomatisation in the intuitionistic language for $\sf KMGL$.

We similarly obtain an additional family of logics by reversing the roles of the vertical and horizontal modalities in the translation, so that e.g.~$t(\varphi\Rightarrow\psi ) = \boxdot_v (t(\varphi)\to t(\psi)) $. Let us denote the logic obtained from $ [{\sf GL},{\sf GL}]^{\sf e} $ in this way by $\sf GLI$ and its Kuznetsov-Muravitsky extension by $\sf GLKM$. Following Balbiani et al.~\cite{BalbianiDF21}, we can check that $\sf IGL^e$ and $\sf GLI$ are incomparable since $(\Diamond p\to \Box q)\to \Box (p\to q)$ belongs only to $\sf IGL^e$ but $\Box(p\vee q)\to \Box p\vee\Diamond q $ only to $\sf GLI$. The decidability of $\sf GLI$ can be obtained from that of $ [{\sf GL},{\sf GL}]^{\sf e} $, but it is likely that it can also be obtained by more elementary means as in~\cite{BalbianiDF21}. This raises our two final questions.

\begin{question}
Is any of $\sf IGL^e$, $\sf KMGL$, $\sf GLI$, or $\sf GLKM$ finitely axiomatisable?    
\end{question}

\begin{question}
Is any of $\sf IGL^e$, $\sf KMGL$, $\sf GLI$, or $\sf GLKM$ decidable in primitive recursive time? 
\end{question}

\bibliographystyle{plain}
\bibliography{biblio}

\end{document}